\newtheorem{thmInt}{Theorem}[section]
\newaliascnt{propInt}{thmInt}
\newaliascnt{corInt}{thmInt}
\newtheorem{corInt}[corInt]{Corollary}
\newaliascnt{prop}{thm}
\newtheorem{prop}[prop]{Proposition}
\newaliascnt{lem}{thm}
\newtheorem{lem}[lem]{Lemma}
\newaliascnt{cor}{thm}
\newtheorem{cor}[cor]{Corollary}
\newaliascnt{conjecture}{thm}
\newaliascnt{qn}{thm}
\theoremstyle{definition}
\newaliascnt{definition}{thm}
\newtheorem{definition}[definition]{Definition}
\newaliascnt{remark}{thm}
\newtheorem{remark}[remark]{Remark}
\newaliascnt{ex}{thm}
\newtheorem{ex}[ex]{Example}
\numberwithin{equation}{section}
\newcommand{\NN}{\mathbb{N}}
\newcommand{\ZZ}{\mathbb{Z}}
\newcommand{\iso}{\cong}
\newcommand{\farg}{-} % argument of a functor
\newcommand{\id}{\mathrm{id}}
\newcommand{\Id}{\mathrm{Id}}
\newcommand{\st}{\mid} % such that
\newcommand{\comp}{\circ} % composition
\newcommand{\mor}[1]{\xrightarrow{#1}}
\newcommand{\mono}{\hookrightarrow} % monomorphism
\newcommand{\epi}{\twoheadrightarrow} % epimorphism
\newcommand{\isomor}{\mor{\sim}} % isomorphism
\newcommand{\card}[1]{\lvert#1\rvert} % cardinality of a set
\newcommand{\rest}[1]{|_{#1}} % restriction of function
\newcommand{\K}{\Bbbk} % field K
\newcommand{\Iso}{\mathrm{Isom}}
\newcommand{\cat}[1]{{\mathbf{#1}}} % category
\newcommand{\Ob}[1]{\mathrm{Ob}(#1)} % objects of a category
\newcommand{\fun}[1]{\mathsf{#1}} % functor
\newcommand{\Ho}[1]{\mathrm{Ho}(#1)} % localization w.r.t. quasi-equivalences
\newcommand{\Inf}[1]{\mathrm{Ho}_\infty(#1)} % infty cat. localization w.r.t. quasi-equivalences
\newcommand{\red}[1]{\overline{#1}} % reduction
\newcommand{\aug}[1]{#1^+} % augmentation
\newcommand{\B}{\mathrm{B}} % bar construction
\newcommand{\Bi}{\B_{\infty}} % \infty bar construction
\newcommand{\coB}{\Omega} % cobar construction
\newcommand{\adjpair}[4]{#1\colon#2\rightleftarrows#3:\!#4}
\newcommand{\sh}[2][1]{#2[#1]} % shift functor
\newcommand{\Tc}[1]{\mathrm{T}^c(#1)} % tensor coalgebra
\newcommand{\Tar}[1]{\red{\mathrm{T}}(#1)} % reduced tensor algebra
\newcommand{\Tcr}[1]{\red{\mathrm{T}}^c(#1)} % reduced tensor coalgebra
\newcommand{\Vid}[1][]{J_{#1}} % ideal of U^n^+ whose quotient yields U
\newcommand{\Vnid}[1][]{I_{#1}} % ideal of U^n whose quotient yields U
\newcommand{\Vide}[1][]{J'_{#1}} % explicit form of Vid
\newcommand{\htp}{\sim} % homotopy
\newcommand{\htpm}{\asymp} % homotopy in a model category
\newcommand{\hiso}{\approx} % iso. in H^0
\newcommand{\quot}[1]{/\!#1} % quotient by an equivalence relation
\newcommand{\m}[2][]{\mathrm{m}^{#2}_{#1}} % compositions for A_\infty categories
\newcommand{\lotimes}{\otimes^{\mathbb{L}}} % derived tensor product
\newcommand{\IHom}{\mathbb{R}\underline{Hom}} % internal Hom
\newcommand{\filt}[1]{\mathrm{F}^{#1}} % filtration
\newcommand{\gr}[1]{\mathrm{gr}^{#1}} % graded object of a filtration
\newcommand{\grVn}[1]{L_{#1}} % graded object of U^n
\newcommand{\grVnid}[1]{I_{#1}} % graded object of Vnid
\newcommand{\htpiso}{homotopy isomorphism}
\newcommand{\htpequiv}{homotopy equivalence}
\newcommand{\grc}[1]{#1^{\mathrm{gr}}} % graded category of a triangulated cat.
\newcommand{\cA}{\cat{A}}
\newcommand{\cB}{\cat{B}}
\newcommand{\cC}{\cat{C}}
\newcommand{\cD}{\cat{D}}
\newcommand{\cS}{\cat{S}}
\newcommand{\cT}{\cat{T}}
\newcommand{\dgAlg}{\cat{dgAlg}} % category of (unital) dg algebras
\newcommand{\dgAlgn}{\cat{dgAlg^n}} % category of non-unital dg algebras
\newcommand{\dgCat}{\cat{dgCat}} % category of (unital) dg categories
\newcommand{\dgCatn}{\cat{dgCat^n}} % category of non-unital dg categories
\newcommand{\dgCatu}{\cat{dgCat^u}} 
\newcommand{\dgcoCatn}{\cat{dgcoCat^n}} % category of non-unital dg cocategories
\newcommand{\dgCatc}{\cat{dgCat^c}} % category of cohomologically unital dg categories
\newcommand{\AAlg}{\cat{A_\infty Alg}} % category of (unital) A_infty algebras
\newcommand{\AAlgn}{\cat{A_\infty Alg^n}} % category of non-unital A_infty algebras
\newcommand{\AAlgu}{\cat{A_\infty Alg^u}} % category of unital A_infty algebras
\newcommand{\AAlgc}{\cat{A_\infty Alg^c}} % category of cohomologically unital A_infty algebras
\newcommand{\ACat}{\cat{A_\infty Cat}} % category of (strictly unital) A_infty categories
\newcommand{\ACathp}{\cat{A_\infty Cat_{hp}}}
\newcommand{\ACatn}{\cat{A_\infty Cat^n}} % category of non-unital A_infty categories
\newcommand{\ACatc}{\cat{A_\infty Cat^c}} % category of cohomologically unital A_infty categories
\newcommand{\ACatu}{\cat{A_\infty Cat^u}} % category of unital A_infty categories
\newcommand{\ACatuhp}{\cat{A_\infty Cat^u_{hp}}}
\newcommand{\ACathps}{\cat{A_\infty Cat_{hps}}}
\newcommand{\Hqe}{\Ho{\dgCat}} % homotopy category of dg categories
\newcommand{\HoACat}{\Ho{\ACat}} % localization of A_infty categories
\newcommand{\HoACatu}{\Ho{\ACatu}} % localization of unital A_infty categories
\newcommand{\HoACatc}{\Ho{\ACatc}} % localization of cohomologically unital A_infty categories
\newcommand{\InfdgCat}{\Inf{\dgCat}}
\newcommand{\InfdgCatu}{\Inf{\dgCatu}}
\newcommand{\InfdgCatc}{\Inf{\dgCatc}}
\newcommand{\InfACat}{\Inf{\ACat}}
\newcommand{\InfACatu}{\Inf{\ACatu}}
\newcommand{\InfACatc}{\Inf{\ACatc}}
\newcommand{\InfdgAlg}{\Inf{\dgAlg}}
\newcommand{\InfdgAlgn}{\Inf{\dgAlgn}}
\newcommand{\InfAAlg}{\Inf{\AAlg}}
\newcommand{\InfAAlgu}{\Inf{\AAlgu}}
\newcommand{\InfAAlgc}{\Inf{\AAlgc}}
\newcommand{\InfAAlgn}{\Inf{\AAlgn}}
\newcommand{\QACat}{\ACat\quot{\hiso}} % quotient of ACat by unital iso. in H^0
\newcommand{\QACathp}{\ACathp\quot{\hiso}}
\newcommand{\QACatu}{\ACatu\quot{\hiso}} % quotient of ACatu by iso. in H^0
\newcommand{\QACatuhp}{\ACatuhp\quot{\hiso}}
\newcommand{\QACathps}{\ACathps\quot{\hiso}}
\newcommand{\ACatdg}{\cat{A_\infty Cat_{dg}}} % full subcat. of dg cat. in ACat
\newcommand{\HoACatdg}{\Ho{\ACatdg}} % localization of ACatdg
\newcommand{\FunA}{\cat{Fun}_{\ACat}}
\newcommand{\FunAu}{\cat{Fun}_{\ACatu}}
\newcommand{\FunAn}{\cat{Fun}_{\ACatn}}
\newcommand{\fF}{\fun{F}}
\newcommand{\fG}{\fun{G}}
\newcommand{\fH}{\fun{H}}
\newcommand{\fI}{\fun{I}}
\newcommand{\fJ}{\fun{J}}
\newcommand{\fK}{\fun{K}}
\newcommand{\fN}{\fun{N}}
\newcommand{\fS}{\fun{S}}
\newcommand{\fT}{\fun{T}}
\newcommand{\fU}{\fun{U}}
\newcommand{\fdgA}{\fI} % inclusion functor of dgCat in ACat
\newcommand{\fdgu}{\fJ} % inclusion functor of dgCat in dgCatu
\newcommand{\fuc}{\fK} % inclusion functor of dgCatu in dgCatc
\newcommand{\fudg}{\fT} % functor dgCatu to dgCat
\newcommand{\fudga}{\fudg'} % functor dgCat to dgCat
\newcommand{\fcu}{\fS} % functor dgCatc to dgCatu
\newcommand{\fcue}{\tilde\fcu} % functor dgCatn to dgCatn
\newcommand{\fdgAn}{\fdgA^{\fun{n}}} % inclusion functor of dgCatn in ACatn
\newcommand{\VdB}{\fU} % functor U from ACat to dgCat
\newcommand{\VdBn}{\VdB^{\fun{n}}} % functor Un from ACatn to dgCatn
\newcommand{\dgmf}[1]{\tilde{#1}} % dg functor associated to a multifunctor
\newcommand{\fnc}{\fH} % cohomology functor of nudge
\newcommand{\ncb}[1][]{\alpha_{#1}} % nat. tr. cobar bar -> id
\newcommand{\nbc}[1][]{\beta_{#1}} % nat. tr. id -> bar cobar
\newcommand{\ncbi}[1][]{\gamma_{#1}} % nat. tr. id -> cobar bar_\infty
\newcommand{\ntV}[1][]{\rho_{#1}} % nat. tr. id -> IU
\newcommand{\nfV}[1][]{\sigma_{#1}} % nat. tr. UI -> id
\newcommand{\nqV}[1][]{\pi_{#1}} % nat. tr. U^n -> U
\newcommand{\nudg}[1][]{\eta_{#1}} % nat. tr. id -> JT
\newcommand{\nudga}[1][]{\eta'_{#1}} % nat. tr. id -> T'
\newcommand{\nudgb}[1][]{\eta''_{#1}} % nat. tr. TJ -> T'
\newcommand{\nudge}[1][]{\tilde\eta_{#1}} % extension of nat. tr. id -> JT
\newcommand{\nuc}[1][]{\varepsilon_{#1}} % nat. tr. S -> id
\newcommand{\hp}{^\mathrm{hp}} % h-projective resolution
\newcommand{\hps}{^\mathrm{hps}} % h-projective resolution with split units
\newcommand{\unit}{e} % unit in a unital A_\infty cat.
\newcommand{\W}[1][]{\mathcal{W}_{#1}} % class of weak equivalences
\newcommand{\dg}{\mathrm{dg}}
\newcommand{\ai}{\mathrm{A}_\infty}
\newcommand{\pretr}[1][]{\mathrm{pretr}_{#1}}
\newcommand{\Yon}[1]{\fun{Y}_{#1}} % Yoneda functor
\newcommand{\rep}[1]{\cat{R}_{#1}} % dg cat. image of Yoneda functor
\newcommand{\cod}{\mu}
\newcommand{\com}{\Delta}
\newcommand{\degp}{\deg'}
\newcommand{\siso}{\iota}
\newcommand{\pro}{\xi}
\newcommand{\nat}{\theta}
\newcommand{\enat}{\widetilde{\nat}}
\newcommand{\inat}{\overline{\nat}}
\newcommand{\ext}[1]{\widehat{#1}}
\begin{document}

	\title[Localizations of the categories of $A_\infty$ categories and internal Homs]{Localizations of the categories of $A_\infty$ categories and internal Homs over a ring}

	\author[A.~Canonaco]{Alberto Canonaco}
\address{A.C.: Dipartimento di Matematica ``F.\ Casorati''\\
        Universit{\`a} degli Studi di Pavia\\
        Via Ferrata 5\\
        27100 Pavia\\
        Italy}
	\email{alberto.canonaco@unipv.it}

    \author[M.~Ornaghi]{Mattia Ornaghi}
   \address{M.O.: Dipartimento di Matematica ``F.~Enriques''\\Universit\`a degli Studi di Milano\\Via Cesare Saldini 50\\ 20133 Milano\\ Italy}
   \email{mattia12.ornaghi@gmail.com}
   \urladdr{\url{https://sites.google.com/view/mattiaornaghi}}
    
    \author[P.~Stellari]{Paolo Stellari}
    \address{P.S.: Dipartimento di Matematica ``F.~Enriques''\\Universit\`a degli Studi di Milano\\Via Cesare Saldini 50\\ 20133 Milano\\ Italy}
    \email{paolo.stellari@unimi.it}
    \urladdr{\url{https://sites.unimi.it/stellari}}
	
	\thanks{A.~C.~and P.~S.~are members of GNSAGA (INdAM) and were partially supported by the research project PRIN 2022 ``Moduli spaces and special varieties''.
	In addition, M.~O.~and P.~S.~were partially supported by the research project FARE 2018 HighCaSt (grant number R18YA3ESPJ) while the research by  P.~S.~was partially funded by the project ERC-2017-CoG-771507-StabCondEn.}

	\keywords{Dg categories, $A_\infty$ categories}

	\subjclass[2020]{18D20, 18E35, 18N40, 57T30}

\begin{abstract}
We show that, over an arbitrary commutative ring, the localizations of the categories of dg categories, of  cohomologically unital, of unital and of strictly unital $A_\infty$ categories with respect to the corresponding classes of quasi-equivalences are all equivalent. The result is proven at the $\infty$-categorical level by considering the natural $\infty$-categorical models of the categories above. As an application of the techniques we develop to compare the localizations mentioned above, we provide a new proof of the existence of internal Homs for the homotopy category of dg categories in terms of the category of (strictly) unital $A_\infty$ functors. This yields a complete proof of a claim by Kontsevich and Keller.
\end{abstract}

\maketitle

\setcounter{tocdepth}{1}
\tableofcontents

\section*{Introduction}

This paper extends and, at the same time, repairs some existing results about the homotopy categories of differential graded (dg from now on) and $A_\infty$ categories. The interest in these homotopy categories has grown during the last two decades and their study has produced several remarkable results. Nonetheless most of them depend on the assumption that such categories are linear over a field. At first sight this might look like a mild assumption but, as soon as we start thinking of applications of dg or $A_\infty$ categories to algebraic or geometric problems such as deformation theory, it becomes a priority to replace the ground field with any commutative ring.

This simple observation was the main incentive to reconsider our previous results in \cite{COS} whose proofs deeply used the assumption that the categories are linear over a field. Unfortunately, the effort to generalize our previous work drew our attention to the unpleasant presence in the literature of a couple of mistakes with deep repercussions on several papers, including \cite{COS}. We will come back to them later in the introduction. For now we want to stress that our effort to find a way out of them was not only successful but provided a wide generalization of all known results along the lines that we would like to outline now.

\smallskip

Let us consider the category $\dgCat$ consisting of (small) dg categories defined over a commutative ring $\K$.  Due to the work of Tabuada \cite{Ta}, $\dgCat$ has a model category structure which allows one to consider its homotopy category $\Hqe$, which is nothing but the localization of  $\dgCat$ with respect to all quasi-equivalences. The latter being special dg functors which induce an equivalence at the homotopy level. If we replace $\dgCat$ with the corresponding category of $A_\infty$ categories, one can still consider its localization with respect to quasi-equivalences. Note that the category of $A_\infty$ categories famously does not have a model structure with limits and colimits (see, for example, \cite[Section 1.5]{COS}).

The delicate issue about $A_\infty$ categories is that they can be endowed with various notions of unit and all of them are somehow motivated by the applications. One can indeed take the category $\ACat$ of strictly unital $A_\infty$ categories. Or, alternatively, the category $\ACatu$ of unital $A_\infty$ categories. One could go further and consider the category $\ACatc$ of cohomologically unital $A_\infty$ categories. We will discuss these subtleties in detail in \autoref{subsec:units}. For now it is enough to keep in mind that we have natural faithful (the last one also full) functors
\begin{equation}\label{eq:incl}\tag{1}
\dgCat\mono\ACat\mono\ACatu\mono\ACatc.
\end{equation}

While strictly unital $A_\infty$ categories are natural generalizations of dg categories, unital 
and cohomologically unital ones are those which appear when dealing with Fukaya categories. When the ground ring is a field the categories $\ACatu$ and $\ACatc$ coincide, but over a commutative ring they are different, in general. More flexible and interesting categorical objects are obtained by considering their localizations
\begin{equation}\label{eq:hoincl}\tag{2}
\Hqe\longrightarrow\HoACat\longrightarrow\HoACatu\longrightarrow\HoACatc
\end{equation}
with respect to the corresponding classes of quasi-equivalences.

The need for a comparison between the (homotopy) category of dg categories and the one of $A_\infty$ categories is pervasive. We will mention more applications later in the introduction. For now, it is worth recalling that the core of the Homological Mirror Symmetry Conjecture, due to Kontsevich \cite{Ko}, is indeed a comparison between dg enhancements of the bounded derived category of coherent sheaves on a Calabi--Yau threefold and the Fukaya category (hence an $A_\infty$ category) on a mirror Calabi--Yau threefold.

Taking a higher categorical viewpoint and following \cite{Pas,Tan}, we can consider the $\infty$-categories
\begin{equation}\label{eq:infincl}\tag{3}
\InfdgCat\longrightarrow\InfACat\longrightarrow\InfACatu\longrightarrow\InfACatc
\end{equation}
which are the natural $\infty$-categorical enhancements of $\Hqe$, $\HoACat$, $\HoACatu$ and $\HoACatc$, respectively, with natural $\infty$-functors induced by \eqref{eq:incl} (see the beginning of \autoref{sec:infequiv} for more details). The first main result of this paper is then the following.

\begin{thmInt}\label{infequiv}
All functors in \eqref{eq:infincl} are equivalences of $\infty$-categories.
\end{thmInt}

We should note that such a result is a generalization of Theorem 1.1 in an earlier version of \cite{OT}. Unfortunately, as the authors later realized, the proof in loc.\ cit.\ turned out to be wrong. If we stick to dg or strictly unital $A_\infty$ categories which are linear over a field, \autoref{infequiv} was proved in \cite{Pas}, by using in an essential way the results in \cite{COS}. Actually, we will explain in \autoref{subsec:Pas} that the same argument as in \cite{Pas}, together with \autoref{dgAadj}, yields a proof of the part of \autoref{infequiv} which claims that $\InfdgCat$ and $\InfACat$ are equivalent, over an arbitrary commutative ring. The equivalence between the latter category and $\InfACatu$ is established in \autoref{subsec:suu}. This result has already appeared in \cite{Tan}, but here we provide a more direct and simpler proof. The equivalence between $\InfACatu$ and $\InfACatc$ is rather easily obtained in \autoref{subsec:ucu}. It should be noted that, beyond the categories appearing in \eqref{eq:incl}, one can consider the categories $\dgCatu$ and $\dgCatc$ of, respectively, unital and cohomologically unital dg categories. As a byproduct of our proofs, it turns out that also the $\infty$-categories $\InfdgCatu$ and $\InfdgCatc$ are equivalent to those in \eqref{eq:infincl}.

It is worth pointing out that, as observed in \cite[Section 1.2]{Tan}, \autoref{infequiv} combined with the results in \cite{Hau} shows that the Gepner--Haugseng’s
model for the collection of all $\infty$-categories enriched in chain complexes in \cite{GH} is equivalent to $\InfACat$. And more applications of the above result are discussed in \cite[Section 3]{Tan}.

If we pass to the homotopy categories, then \autoref{infequiv} has a very nice and useful $1$-categorical incarnation which we state as follows.

\begin{corInt}\label{1equiv}
All functors in \eqref{eq:hoincl} are equivalences of categories.
%Moreover, all categories in \eqref{eq:hoincl} are equivalent to $\QACatuhp$ and to $\QACathps$.
\end{corInt}

\smallskip

We will discuss later in the introduction other interesting applications of \autoref{infequiv}. Now we want to present the second important result of the paper: a new proof for the existence of internal Homs in the homotopy category $\Hqe$. In order to make this precise, recall that given two dg categories $\cA_1$ and $\cA_2$, one can form their tensor product $\cA_1\otimes\cA_2$ in $\dgCat$. In order to get a well defined tensor product in $\Hqe$ we need to \emph{derive} it by setting $\cA_1\lotimes\cA_2:=\cA_1\otimes\cA_2\hp$. Here, given a dg category $\cA$, we denote by $\cA\hp$ (respectively $\cA\hps$) a h-projective (respectively h-projective with split units) dg category with the property that $\cA\iso\cA\hp$ (respectively $\cA\iso\cA\hps$) in $\Hqe$ (see \autoref{cofhp}). Note that, in full generality, an $A_\infty$ category $\cA$ is \emph{h-projective} if the complex of morphisms $\cA(A,A')$ is such, for all $A,A'\in\cA$, and the technical property of having split units is stated in \autoref{def:splitunit}.

The  main result in \cite{To}, later reproved in \cite{CS}, shows that the tensor product $\farg\lotimes\cA_2$ has a right adjoint $\IHom(\cA_2,\farg)$ in $\Hqe$. Namely, for $\cA_1$, $\cA_2$ and $\cA_3$ in $\dgCat$, we have a dg category $\IHom(\cA_2,\cA_3)$ and a natural bijection
\begin{equation}\label{eq:intHom1}\tag{4}
\xymatrix{
	\Hqe(\cA_1\lotimes\cA_2,\cA_3) \ar@{<->}[rr]^-{1:1} & & \Hqe(\cA_1,\IHom(\cA_2,\cA_3)).
}
\end{equation}
The dg category $\IHom(\cA_2,\cA_3)$ is the \emph{internal Hom} between $\cA_2$ and $\cA_3$, and it is uniquely determined (up to isomorphism in $\Hqe$) by the bijection in \eqref{eq:intHom1}.

A direct consequence is that we get a natural bijection
\begin{equation}\label{eq:intHom2}\tag{5}
\xymatrix{
	\Hqe(\cA_,\cB) \ar@{<->}[rr]^-{1:1}& &\Iso(H^0(\IHom(\cA,\cB))),
}
\end{equation}
for all dg categories $\cA,\cB\in\dgCat$.

The astonishing fact is that, well before the appearance of \cite{To}, Kontsevich envisioned that such internal Homs should exist and could be described in terms of $A_\infty$ functors between the corresponding dg categories. Such a claim, originally mentioned in \cite{Dr}, was later recast by Keller in his ICM talk \cite{K2} (see Section 4.3 therein). Its formulation requires some additional notation which we briefly sketch here.

If $\cA,\cB\in\ACatu$, we can consider the unital $A_\infty$ category $\FunAu(\cA,\cB)$ which will be carefully defined in \autoref{subsec:equivfun} and whose objects are the unital $A_\infty$ functors from $\cA$ to $\cB$. Analogously, if $\cA,\cB\in\ACat$, we can consider the strictly unital $A_\infty$ category $\FunA(\cA,\cB)$ of stricly unital $A_\infty$ functors from $\cA$ to $\cB$. It is important to keep in mind that $\FunAu(\cA,\cB)$ and $\FunA(\cA,\cB)$ are dg categories if $\cB$ is such. The precise formulation of the claim mentioned above is then the following.

\medskip

\noindent{\bf Claim} (Kontsevich, Keller){\bf .} \emph{Let $\cA$ and $\cB$ be dg categories such that $\cA(A,A')$ is a cofibrant complex and the unit map $\K\to\cA(A,A)$ admits a retraction as a morphism of complexes, for all $A,A'\in\cA$. Then $\FunA(\cA,\cB)$ is the dg category of internal Homs between $\cA$ and $\cB$.}

\medskip

%It turns out that, using the techniques that we used to prove \autoref{infequiv} and \autoref{1equiv}, 
Actually we can prove the following result which implies, as a special case, the claim above (see \autoref{rmk:KK}). At the same time, due to its gorgeous generality, it provides a completely new proof of the result in \cite{To} about the existence of internal Homs.

\begin{thmInt}\label{IntHom}
Given two dg categories $\cA,\cB$, the internal Hom in $\Hqe$ between $\cA$ and $\cB$ is given by the dg category
\[
\FunAu(\cA\hp,\cB)\iso\FunA(\cA\hps,\cB),
\]
where the isomorphism is in $\Hqe$. Furthermore, we have equivalences of categories
\[
\xymatrix{
\Hqe\iso\QACatuhp\,\iso\QACathps.
}
\]
\end{thmInt}

Here we denote by $\ACatuhp$ the full subcategory of $\ACatu$ whose objects are h-projective. We can then take the quotient $\QACatuhp$ of $\ACatuhp$ with respect to the equivalence relation which states that two unital $A_\infty$ functors are (weakly) equivalent $\fF\hiso\fG$ if they are isomorphic in the $0$-th cohomology of $\FunAu(\cA,\cB)$. Similarly, $\ACathps$ denotes the full subcategory of $\ACat$ whose objects are h-projective and have split units and we can take an analogous quotient $\QACathps$ of $\ACathps$. Clearly, if $\K$ is a field, then $\ACatu=\ACatuhp$ (but $\ACathps$ is strictly contained in $\ACat$).

While the proof of the first part of \autoref{IntHom} is the content of \autoref{sec:IntHom}, the fact that $\Hqe$ is equivalent to $\QACatuhp$ and $\QACathps$ is established in \autoref{unitalhp}. It is worth pointing out that the equivalence $\Hqe\iso\QACatuhp$ (and similarly the equivalence $\Hqe\iso\QACathps$) can be seen as a categorification of \eqref{eq:intHom2}. Indeed, given two dg categories $\cA$ and $\cB$, we have a natural bijection
\[
\xymatrix{
\Hqe(\cA,\cB) \ar@{<->}[r]^-{1:1}&\QACatuhp(\cA\hp,\cB\hp)=\Iso(H^0(\IHom(\cA,\cB\hp))),
}
\]
and the latter is in natural bijection with $\Iso(H^0(\IHom(\cA,\cB)))$ by \eqref{eq:intHom1}.

\subsection*{Applications and further developments}

Our \autoref{infequiv} should be mainly seen as a foundational result that puts on solid ground the homotopy theory of $A_\infty$ categories. One issue of the category of $A_\infty$ categories which we have mentioned above is the lack of model structure which makes hard to define and study homotopy limits and colimits of $A_\infty$ categories. Our result builds a bridge that allows one to import categorical constructions from the much better behaved world of dg categories where a model structure exists. An interesting discussion on how comparing results similar to \autoref{infequiv} may be used for the purposes of computing homotopy limits and colimits can be found in \cite[Section A.4]{GPS}.

One important incarnation of the ideas mentioned above is the application of \autoref{1equiv} to the problem of existence and uniqueness of enhancements for triangulated categories. Indeed, from \autoref{1equiv} we immediately get the following useful result.

\begin{corInt}\label{cor:uniqen}
A triangulated category has a (unique) dg enhancement if and only if it has a (unique) $A_\infty$ enhancement.
\end{corInt}

Note that for most triangulated categories naturally appearing in algebra or in algebraic geometry the existence of a (dg) enhancement is very easy to prove. On the other hand, the problem of uniqueness of enhancement is usually much harder. The interest for this problem was initiated by an influential conjecture by Bondal, Larsen and Lunts in \cite{BLL} for geometric triangulated categories. The conjecture was proved by Lunts and Orlov in the seminal paper \cite{LO} and the result was then further extended in \cite{CS6} and \cite{An} (see also \cite{Ge}), up to the last and most general result in \cite{CNS}, where it was proved that the following triangulated categories have unique dg (hence also $A_\infty$) enhancements:
\begin{itemize}
\item[{\rm (a)}] $\mathrm{D}^?(\cA)$, where $\cA$ is an abelian category and $?=b,+,-,\emptyset$;
\item[{\rm (b)}] The category $\mathrm{D}_\text{qc}^?(X)$ of complexes of $\mathcal{O}_X$-modules with quasi-coherent cohomology and the category $\mathrm{Perf}(X)$ of perfect complexes on a quasi-compact and quasi-separated scheme, for $?=b,+,-,\emptyset$.
\end{itemize}

Another interesting feature of \autoref{infequiv} is that its proof can be very easily adapted to prove a similar statement for algebras. More precisely, we can consider the categories $\AAlg$, $\AAlgu$, $\AAlgc$ and $\AAlgn$ of strictly unital, unital, cohomologically unital and non-unital $A_\infty$ algebras, respectively. Similarly, $\dgAlg$ and $\dgAlgn$ will be the categories of strictly unital and non-unital dg algebras, respectively. If $\cC$ is any of these categories, we denote by $\Ho{\cC}$ its localization with respect to quasi-isomorphisms, and by $\Inf{\cC}$ the $\infty$-categorical enhancement of $\Ho{\cC}$. Here it is important to observe that, unlike quasi-equivalences for (dg or $A_\infty$) categories, quasi-isomorphisms for (dg or $A_\infty$) algebras make perfect sense also in the non-unital case. We then immediately get the following result (see \autoref{rmk:algebras} for the corresponding discussion).

\begin{thmInt}\label{infalg}
The natural functors
\begin{gather*}
\dgAlg\mono\AAlg\mono\AAlgu\mono\AAlgc, \\
\dgAlgn\mono\AAlgn
\end{gather*}
induce equivalences of $\infty$-categories
\begin{gather*}
\InfdgAlg\longrightarrow\InfAAlg\longrightarrow\InfAAlgu\longrightarrow\InfAAlgc, \\
\InfdgAlgn\longrightarrow\InfAAlgn.
\end{gather*}
\end{thmInt}

We avoid to state the analogous of \autoref{1equiv} for algebras, which of course can be directly deduced from \autoref{infalg}. We point out that, in the non-unital case and when $\K$ is a field of characteristic $0$, such a result also follows from \cite[Theorem 11.4.8]{LV}, which is formulated in the more general language of operads.

\medskip

Let us now discuss the applications of \autoref{IntHom} and let us go back to the original description in \cite{To} of the internal Homs in terms of special dg bimodules. The fact that dg bimodules can be easily composed via convolutions, led To\"en to state the following as an open question in the introduction to \cite{To}.

\medskip

\noindent{\bf Question} (To\"en){\bf .} \emph{Is the bijection \eqref{eq:intHom2} compatible with compositions?}

\medskip

Our new description of the internal Homs and the full power of \autoref{IntHom} immediately yields a positive answer to this question (see \autoref{rmk:compositions} for a detailed discussion). Finally, we hope that our new description might clarify the still partially mysterious relation between objects in $\Iso(H^0(\IHom(\cA,\cB)))$ and Fourier--Mukai kernels of exact functors when $\cA$ and $\cB$ are dg enhancements of the derived categories of quasi-coherent sheaves on two quasi-compact and separated schemes. In \cite[Section 8.3]{To} it was claimed that such a relation should be encoded in a natural way by the bijection \eqref{eq:intHom2}. However, a (far from trivial) proof of this claim is available only under restrictive assumptions on the schemes (see \cite{LS}). 

\subsection*{Related work}

The first comparison has to be made with our previous paper \cite{COS}. Besides the obvious observation that our new results imply essentially all the ones in \cite{COS}, we should go back to our first claim in the introduction: the fact that this paper corrects and overcomes some mistakes in the literature.

It was proved by Lef\`evre-Hasegawa \cite[Proposition 3.2.4.1]{LH} for $A_\infty$ algebras and later by Seidel \cite[Lemma 2.1]{Sei} for $A_\infty$ categories that any cohomologically unital $A_\infty$ algebra or category can be replaced with a strictly unital one, at least when we work over a field. Similarly, \cite[Theorem 3.2.2.1]{LH} and \cite[Remark 2.2]{Sei} claim that the same is true for functors: an $A_\infty$ functor between strictly unital $A_\infty$ categories can be replaced by a strictly unital one, up to homotopy. Unfortunately, after carefully thinking about these claims, one realizes that none of them can be true in this generality for trivial reasons. And the falsity of the first claim about categories (and algebras) was indeed later observed  by Seidel in an erratum to \cite{Sei}.

While many technical parts of \cite{COS} remain valid (and will also be used in this paper) others, heavily relying on the two claims above, have to be revisited. For example, \cite[Proposition 2.5]{COS} is easily seen to be false as soon as we consider $A_\infty$ categories $\cA$ such that the complex $\cA(A,A)$ has trivial cohomology but it is not trivial, for some some $A$ in $\cA$. This produces a cascade of problems in the proof of \cite[Theorem A]{COS}, some of which can be overcome by readjusting the arguments, while some of them needs the new (and at the same time more general) approach which we adopt in the present paper. A careful comparison shows that the new \autoref{1equiv} replaces the old one for most of its parts once we recall that the categories of cohomologically unital and of unital $A_\infty$ categories are the same, over a field. There is only one claim in \cite[Theorem A]{COS} that is not covered by our new results: the equivalence between $\Hqe$ and $\QACat$. In fact we expect that such an equivalence does not hold, as \autoref{1equiv} shows that $\Hqe$ is equivalent to $\QACathps$ and it is easy to see that the inclusion of $\QACathps$ in $\QACathp\,=\QACat$ is not an equivalence (see \autoref{suwrong}).

Moreover, for similar reasons, the description of the internal Homs in terms of strictly unital $A_\infty$ functors has to be corrected or replaced by the one which uses unital $A_\infty$ functors. The result is that the new \autoref{IntHom} replaces and generalizes the old \cite[Theorem B]{COS}.

Finally, as we have already explained before, \autoref{infequiv} for categories linear over a commutative ring provides a complete proof of Theorem 1.1 in an earlier version of \cite{OT}. The latter result together with \autoref{IntHom} gives then access to the many very interesting applications discussed in the second part of \cite{OT} (see, in particular, Sections 4 and 5 therein) and in \cite[Section 1.3]{Tan}.

\subsection*{Plan of the paper}

In \autoref{sec:introAinfty} we briefly recall the basic definitions and constructions which are used all along the paper. We refer to the existing literature for more details but an issue that we try to analyze carefully is the difference between the various notions of unit for $A_\infty$ categories (see \autoref{subsec:units}).
 
In \autoref{sec:adjUI} we show the existence of a crucial pair of adjoint functors between the category of non-unital dg categories $\dgCatn$ and $\ACatn$ (for later use in \autoref{sec:IntHom}, the key step of the proof needs to be treated in a more general setting, which makes \autoref{sec:adjUI} the more technical part of the paper). The analysis has to be refined in \autoref{sec:infequiv} in order to deal with strictly unital, unital and cohomologically unital $A_\infty$ categories, thus proving \autoref{infequiv}. 

 As for \autoref{IntHom}, the proof of the last part is carried out in \autoref{sec:1equiv}. The first part of \autoref{IntHom} is proved in \autoref{sec:IntHom}, after a preliminary discussion about multifunctors in \autoref{subsec:multifun}. %One of the aims of \autoref{sec:IntHom} is to clarify the behaviour of composition for the new description of the dg category of internal Homs.
 
 \subsection*{Notation and conventions}
 
 We assume that a universe containing an infinite set is fixed, and we will simply call sets the members of this universe. In general the collection of objects of a category need not be a set: we will always specify if we are requiring this extra condition.
 
 We work over a commutative ring $\K$.  We will always assume that the collection of objects in a $\K$-linear category is a set.

 The shift by an integer $n$ of a graded $\K$-module $M=\bigoplus_{i\in\ZZ}M^i$ will be denoted by $\sh[n]{M}=\bigoplus_{i\in\ZZ}M^{n+i}$. If $x\in M$, we will often write $\sh[n]{x}$ to denote the same element in $\sh[n]{M}$. If $x$ is homogeneous, say $x\in M^i$, then we set $\deg(x):=i$ and $\degp(x):=i+1$.

We recall the Koszul sign rule: if $f\colon M\to M'$ and $g\colon N\to N'$ are morphisms of graded $\K$-modules, with $g$ homogeneous, then $f\otimes g\colon M\otimes N\to M'\otimes N'$ maps $x\otimes y$, with $x$ homogeneous, to $(-1)^{\deg(g)\deg(x)}f(x)\otimes g(y)$.

Complexes (or dg modules) are cohomological (namely, the differential has degree $+1$).

\section{Preliminaries on $A_\infty$ categories and functors}\label{sec:introAinfty}

In this section we provide a concise introduction to $A_\infty$ categories and functors. In the whole paper the subtle relation between the various notions of unit is crucial. We provide here the basic definitions and properties which will be used later.

As in \cite{COS}, we will follow the sign conventions in \cite{LH}, which are different from (but equivalent to) those used in other references, like \cite{BLM} and \cite{Sei}. In particular, given graded $\K$-modules $M_1,\dots,M_i,N$, a $\K$-linear map $f\colon M_i\otimes\cdots\otimes M_1\to N$ of degree $n$ is identified with the $\K$-linear map
\begin{equation}\label{eq:signshift}
(-1)^{n+i-1}\siso_{N}^{-1}\comp f\comp(\siso_{M_i}\otimes\cdots\otimes\siso_{M_1})\colon\sh{M_i}\otimes\cdots\otimes\sh{M_1}\to\sh{N}
\end{equation}
of degree $n+i-1$, where $\siso_M\colon\sh{M}\to M$ is the natural isomorphism of degree $1$, for every graded $\K$-module $M$.

\subsection{Non-unital $A_\infty$ categories, functors and natural transformations}\label{subsec:Ainftycat}

We start by recalling the explicit definitions of non-unital $A_\infty$ categories, functors and (pre)natural transformations. A conceptual explanation of the otherwise mysterious formulas appearing in this section will be given in \autoref{subsec:barcobar}.

\begin{definition}\label{nucat}
A \emph{non-unital $A_\infty$ category} $\cA$ consists of a set of objects $\Ob{\cA}$, of graded $\K$-modules $\cA(A,A')$ for every $A,A'\in\cA$ and of $\K$-linear maps of degree $2-i$
\begin{equation}\label{eq:catmaps}
\m{i}=\m[\cA]{i}\colon\cA(A_{i-1},A_i)\otimes\cdots\otimes\cA(A_{0},A_1)\to\cA(A_0,A_i),
\end{equation}
for every $i>0$ and every $A_0,\ldots,A_i\in\cA$. The maps must satisfy the \emph{$A_\infty$ associativity relations}
\begin{equation}\label{eq:catrel}
\sum_{k=1}^n\sum_{i=0}^{n-k}(-1)^{i+k(n-i-k)}\m{n-k+1}\comp(\id^{\otimes n-i-k}\otimes\m{k}\otimes\id^{\otimes i})=0
\end{equation}
for every $n>0$.
\end{definition}

In particular, \eqref{eq:catrel} with $n=1$ shows that $\m{1}$ defines a differential on each $\cA(A,A')$, which will always be regarded as a complex in this way. It is also important to observe that $\m{1}$ satisfies the graded Leibniz rule with respect to the composition defined by $\m{2}$ (by the case $n=2$) and that $\m{2}$ is associative, up to a homotopy defined by $\m{3}$ (by the case $n=3$). This implies that we obtain the non-unital graded \emph{cohomology category} $H(\cA)$ of $\cA$ such that $\Ob{H(\cA)}=\Ob{\cA}$,
\[
H(\cA)(A,A')=\bigoplus_i H^i\bigl(\cA(A,A')\bigr)
\]
for every $A,A'\in\cA$ and (associative) composition induced from $\m{2}$.

\begin{definition}\label{nufun}
A \emph{non-unital $A_\infty$ functor} $\fF\colon\cA\to\cB$ between two non-unital $A_\infty$ categories $\cA$ and $\cB$ is a collection $\fF=\{\fF^i\}_{i\geq 0}$, where $\fF^0\colon\Ob{\cA}\to\Ob{\cB}$ is a map of sets and
\begin{equation}\label{eq:funmaps}
\fF^i\colon\cA(A_{i-1},A_i)\otimes\cdots\otimes\cA(A_0,A_1)\to\cB(\fF^0(A_0),\fF^0(A_i)),
\end{equation}
for $i>0$, are $\K$-linear maps of degree $1-i$, for every $A_0,\ldots,A_i\in\cA$. The maps must satisfy the following relations
\begin{multline}\label{eq:funrel}
\sum_{k=1}^n\sum_{i=0}^{n-k}(-1)^{i+k(n-i-k)}\fF^{n-k+1}\comp(\id^{\otimes n-i-k}\otimes\m[\cA]{k}\otimes\id^{\otimes i})\\
=\sum_{\substack{i_1+\cdots+i_r=n\\i_1,\dots,i_r>0}}(-1)^{\sum_{t=1}^{r-1}\sum_{u=t+1}^r(1-i_t)i_u}\m[\cB]{r}\comp(\fF^{i_r}\otimes\cdots\otimes\fF^{i_1}),
\end{multline}
for every $n>0$. A non-unital $A_\infty$ functor $\fF$ is \emph{strict} if $\fF^i=0$ for every $i>1$.
\end{definition}

From \eqref{eq:funrel} with $n=1$ we see that $\fF^1$ commutes with the differentials $\m{1}$. Moreover, $\fF^1$ preserves the compositions $\m{2}$, up to a homotopy defined by $\fF^2$ (by the case $n=2$). It follows that $\fF^0$ and $\fF^1$ induce a non-unital graded functor $H(\fF)\colon H(\cA)\to H(\cB)$.

\begin{remark}
A non-unital $A_\infty$ category $\cA$ such that $\m{i}=0$ for all $i>2$ is called a \emph{non-unital dg category}; for such categories $\m{1}$ and $\m{2}$ are usually denoted by $d$ and $\comp$. A strict non-unital $A_\infty$ functor $\fF$ between two dg categories is called a \emph{non-unital dg functor}; in this case one often writes $\fF$ instead of $\fF^0$ or $\fF^1$. There is a category $\ACatn$ (with objects the non-unital $A_\infty$ categories and morphisms the non-unital $A_\infty$ functors) which contains as a subcategory $\dgCatn$ (with objects the non-unital dg categories and morphisms the non-unital dg functors). While the composition in $\dgCatn$ is the obvious one, the composition in $\ACatn$ is more subtle (see \autoref{subsec:barcobar}); however, we will not need its explicit definition.
\end{remark}

\begin{definition}
Given $\fF,\fG\colon\cA\to\cB$ in $\ACatn$, a \emph{prenatural transformation} $\nat\colon\fF\to\fG$ of degree $p$ is given by $\K$-linear maps of degree $p-i$
\begin{equation}\label{eq:natmaps}
\nat^i\colon\cA(A_{i-1},A_i)\otimes\cdots\otimes\cA(A_0,A_1)\to\cB\bigl(\fF^0(A_0),\fG^0(A_i)\bigr)
\end{equation}
for every $i\ge0$ and every $A_0,\ldots,A_i\in\cA$. We say that $\nat$ is a \emph{natural transformation} if
\begin{multline}\label{eq:natrel}
\sum_{k=1}^n\sum_{i=0}^{n-k}(-1)^{i+k(n-i-k)}\nat^{n-k+1}\comp(\id^{\otimes n-i-k}\otimes\m[\cA]{k}\otimes\id^{\otimes i}) \\
+\sum_{\substack{i_1+\dots+i_r+k+j_1+\cdots+j_s=n\\i_1,\dots,i_r,j_1,\dots,j_s>0,k\ge0}}(-1)^{p+r(p-1)+\sum_{t=1}^r(1-i_t)(n-\sum_{u=1}^{t-1}i_u)+(p-k)\sum_{t=1}^sj_t+\sum_{t=1}^{s-1}\sum_{u=t+1}^s(1-j_t)j_u} \\
\m[\cB]{r+s+1}\comp(\fG^{j_s}\otimes\cdots\otimes\fG^{j_1}\otimes\nat^k\otimes\fF^{i_r}\otimes\cdots\otimes\fF^{i_1})=0
\end{multline}
for every $n\ge0$.
\end{definition}

Observe that $\nat^0$ can be identified with a collection of elements $\nat^0_A\in\cB\bigl(\fF^0(A),\fG^0(A)\bigr)^p$ for every $A\in\cA$. Moreover, \eqref{eq:natrel} with $n=0$ shows that these elements are closed, whereas the case $n=1$ implies that their images in cohomology define a natural transformation $H(\nat)\colon H(\fF)\to H(\fG)$ of degree $p$.

\subsection{Reminder on the bar and cobar constructions}\label{subsec:barcobar}

This section is a quick reminder about the bar and cobar constructions. In \cite[Sections 1.2 and 1.3]{COS} the reader can find definitions and properties of some notions which are not recalled here, like those of (graded or dg) quiver, cocategory and cofunctor. For a more detailed presentation see also \cite{BLM}.

We denote by $\dgcoCatn$ the category whose objects are non-unital cocomplete dg cocategories and whose morphisms are non-unital dg cofunctors.

Given $\cA\in\ACatn$, the \emph{bar construction} $\Bi(\cA)\in\dgcoCatn$ associated to $\cA$ is simply defined to be $\Tcr{\sh{\cA}}$ (where $\cA$ is viewed as a graded quiver) as a non-unital graded cocategory. As for the differential, an arbitrary choice of maps $\m[\cA]{i}$ as in \eqref{eq:catmaps} determines a morphism of graded quivers $\Tcr{\sh{\cA}}\to\sh{\cA}$ of degree $1$ (recall \eqref{eq:signshift}), which extends uniquely to a $(\id_{\Tcr{\sh{\cA}}},\id_{\Tcr{\sh{\cA}}})$-coderivation $d_\cA\colon\Tcr{\sh{\cA}}\to\Tcr{\sh{\cA}}$ of degree $1$. It is easy to see that $d_\cA\comp d_\cA=0$ if and only if \eqref{eq:catrel} holds for every $n>0$, in which case we set $\Bi(\cA):=(\Tcr{\sh{\cA}},d_\cA)$.

Similarly, $\fF\colon\cA\to\cB$ in $\ACatn$ induces $\Bi(\fF)\colon\Bi(\cA)\to\Bi(\cB)$ in $\dgcoCatn$. More precisely, an arbitrary choice of maps $\fF^0\colon\Ob{\cA}\to\Ob{\cB}$ and $\fF^i$ for $i>0$ as in \eqref{eq:funmaps} determines a morphism of graded quivers $\Tcr{\sh{\cA}}\to\sh{\cB}$ of degree $0$, which extends uniquely to a graded cofunctor $\ext{\fF}\colon\Tcr{\sh{\cA}}\to\Tcr{\sh{\cB}}$. Then one can check that $d_\cB\comp\ext{\fF}=\ext{\fF}\comp d_\cA$ if and only if \eqref{eq:funrel} holds for every $n>0$, in which case we set $\Bi(\fF):=\ext{\fF}$. Moreover, the composition in $\ACatn$ is defined in such a way that
\[
\Bi\colon\ACatn\to\dgcoCatn
\]
is a functor, which actually turns out to be fully faithful.

Finally, given $\fF,\fG\colon\cA\to\cB$ in $\ACatn$ (more generally, $\fF$ and $\fG$ could be given by arbitrary maps $\fF^0,\fG^0\colon\Ob{\cA}\to\Ob{\cB}$ and $\fF^i,\fG^i$ for $i>0$ as in \eqref{eq:funmaps}), a prenatural transformation $\nat\colon\fF\to\fG$ of degree $p$ determines $\K$-linear maps $\Tc{\sh{\cA}}(A,A')\to\sh{\cB}\bigl(\fF^0(A),\fG^0(A')\bigr)$ of degree $p-1$ for every $A,A'\in\cA$, which extend uniquely to a $(\ext{\fF},\ext{\fG})$-coderivation $\ext{\nat}\colon\Tc{\sh{\cA}}\to\Tcr{\sh{\cB}}$ of degree $p-1$. Here we still denote by $\ext{\fF},\ext{\fG}\colon\Tc{\sh{\cA}}\to\Tcr{\sh{\cB}}$ the extensions by $0$ of $\ext{\fF}$ and $\ext{\fG}$. Again, it can be easily proved that $d_\cB\comp\ext{\nat}+(-1)^p\ext{\nat}\comp d_\cA=0$ (where we still denote by $d_\cA\colon\Tc{\sh{\cA}}\to\Tc{\sh{\cA}}$ the extensions by $0$ of $d_\cA$) if and only if $\nat$ is a natural transformation.

\begin{remark}\label{natfun}
Given $\fF\colon\cA\to\cB$ in $\ACatn$ and a prenatural transformation $\nat\colon\fF\to\fF$ of degree $1$ such that $\nat^0=0$, we can define $\fG^0:=\fF^0$ and $\fG^i:=\fF^i+\nat^i$ for $i>0$. Then we can regard $\nat$ as a prenatural transformation $\fF\to\fG$, and it is not difficult to show that in this way $\ext{\fG}=\ext{\fF}+\ext{\nat}$. This clearly implies that $\fG$ is a non-unital $A_\infty$ functor if and only if $\nat\colon\fF\to\fG$ is a natural transformation.
\end{remark}

As a matter of notation, we set
\[
\B:=\Bi\rest{\dgCatn}\colon\dgCatn\to\dgcoCatn,
\]
which is a faithful (but not full) functor. Dually, the \emph{cobar construction} yields a faithful (but not full) functor
\[
\coB\colon\dgcoCatn\to\dgCatn.
\]
In particular, for $\cC\in\dgcoCatn$, $\coB(\cC)$ is simply defined to be $\Tar{\sh[-1]{\cC}}$ as a non-unital graded category, with differential induced from the differential and the cocomposition in $\cC$.

By \cite[Proposition 1.21]{COS} there is an adjunction
\[
\adjpair{\coB}{\dgcoCatn}{\dgCatn}{\B},
\]
with counit denoted by $\ncb\colon\coB\comp\B\to\id_{\dgCatn}$ and unit denoted by $\nbc\colon\id_{\dgcoCatn}\to\B\comp\coB$. Since $\Bi$ is fully faithful, for every $\cA\in\ACatn$ there exists unique $\ncbi[\cA]\in\ACatn(\cA,\coB(\Bi(\cA)))$ such that
\[
\nbc[\Bi(\cA)]=\Bi(\ncbi[\cA])\colon\Bi(\cA)\to\Bi(\coB(\Bi(\cA)))=\B(\coB(\Bi(\cA))).
\]
Denoting by $\fdgAn\colon\dgCatn\to\ACatn$ the inclusion functor and setting
\[
\VdBn:=\coB\comp\Bi\colon\ACatn\to\dgCatn,
\]
it is clear that the $A_\infty$ functors $\ncbi[\cA]\colon\cA\to\coB(\Bi(\cA))=\VdBn(\cA)$ (for $\cA\in\ACatn$) define a natural transformation $\ncbi\colon\id_{\ACatn}\to\fdgAn\comp\VdBn$.

\subsection{Notions of unit}\label{subsec:units}

Now we need to discuss the various notions of unit which will be used in the rest of the paper.

\begin{definition}\label{cucat}
A \emph{cohomologically unital $A_\infty$ category} is a non-unital $A_\infty$ category $\cA$ such that $H(\cA)$ is a category (i.e.\ $H(\cA)$ is unital).
\end{definition}

\begin{remark}\label{cunit}
If $\cA$ is a cohomologically unital $A_\infty$ category, then for every $A\in\cA$ there exists (unique up to coboundaries) a closed degree $0$ element $\unit_A\in\cA(A,A)$ representing $\id_A\in H(\cA)(A,A)$. We will say that $\unit_A$ is a \emph{cohomological unit} of $A$.
\end{remark}

\begin{definition}[{\cite[Definition 7.3 and Lemma 7.4]{Lyu}}]\label{ucat}
An {$A_\infty$} category $\cA$ is \emph{unital} if it is cohomologically unital and the following morphisms of complexes
\[
\m{2}(\farg\otimes\unit_A)\colon\cA(A,A')\to\cA(A,A') \qquad \m{2}(\unit_A\otimes\farg)\colon\cA(A',A)\to\cA(A',A)
\]
are homotopic to the identity for every $A,A'\in\cA$, and for some (hence for every) cohomological unit $\unit_A$ of $A$. In this case the cohomological units of $\cA$ can be simply called \emph{units}.
\end{definition}

\begin{definition}\label{sucat}
A \emph{strictly unital $A_\infty$ category} is a non-unital $A_\infty$ category $\cA$ such that for every $A\in\cA$ there exists (unique) a degree $0$ morphisms $\id_A\in\cA(A,A)$ (called a \emph{strict unit} of $A$) satisfying the following properties:
\begin{enumerate}
\item $\m{2}(\farg\otimes\id_A)=\id_{\cA(A,A')}$ and $\m{2}(\id_A\otimes\farg)=\id_{\cA(A',A)}$ for every $A,A'\in\cA$;
\item $\m{i}(f_i\otimes\cdots\otimes f_1)=0$ if $i\ne 2$ and $f_j=\id_A$ for some $j\in\{1,\dots,i\}$ and some $A\in\cA$.
\end{enumerate}
\end{definition}

For a non-unital $A_\infty$ category $\cA$, its \emph{augmentation} is the strictly unital $A_\infty$ category $\aug{\cA}$ such that $\Ob{\aug{\cA}}=\Ob{\cA}$ and
\[
\aug{\cA}(A,A')=
\begin{cases}
\cA(A,A') & \text{if $A\ne A'$} \\
\cA(A,A')\oplus\K\,1_A & \text{if $A=A'$,}
\end{cases}
\]
with $\m[\aug{\cA}]{i}$ the unique extension of $\m[\cA]{i}$ such that the additional morphisms $1_A$ is the unit of $A$ in $\aug{\cA}$, for every $A\in\cA$ and every $i>0$. To avoid confusion, when $\cA$ is strictly unital, the strict unit in $\cA$ is denoted by $\id_A$ while the one in $\aug{\cA}$ is $1_A$, for every $A\in\cA$. 

Similarly, we get \emph{cohomologically unital dg categories}, \emph{unital dg categories} and \emph{strictly unital dg categories}. In accordance to the existing literature, strictly unital dg categories (respectively functors) will be simply referred to as dg categories (respectively functors).

\begin{ex}\label{tensor}
(i) In the special case of (dg or $A_\infty$) categories with only one object, then, for obvious reasons, we will talk about (dg or $A_\infty$) algebras.

(ii) Given two (non-unital, cohomologically unital, unital or strictly unital) dg categories $\cA$ and $\cB$, we can define a (non-unital, cohomologically unital, unital or strictly unital) dg category $\cA\otimes\cB$, which is the tensor product of $\cA$ and $\cB$. Its objects are the pairs $(A,B)$ with $A\in\cA$ and $B\in\cB$, while $(\cA\otimes\cB)\bigl((A,B),(A',B')\bigr)=\cA(A,A')\otimes\cB(B,B')$. If $\cA$ and $\cB$ are $A_\infty$ categories, then defining an appropriate tensor product is a more delicate issue which will be discussed in \cite{Orn1}.
\end{ex}

There is also a notion of homotopy unital $A_\infty$ category (which will not be used in this paper), such that the following implications hold for $A_\infty$ categories (see \cite[Section 8.12.]{Lyu}):
\[
\text{strictly unital}\implies\text{homotopy unital}\implies\text{unital}\implies\text{cohomologically unital}
\]

\begin{remark}\label{u=cu}
If $\K$ is a field, then an $A_\infty$ category is unital if and only if it is cohomologically unital. This is simply due to the fact that, over a field, two morphisms of complexes are homotopic if they induce the same map in cohomology.
\end{remark}

Of course, there are also the corresponding notions of strictly unital, unital and cohomologically unital $A_\infty$ functors (see \cite[pp 23]{Sei} and \cite[Definition 8.1. and Proposition 8.2.]{Lyu}).

\begin{definition}\label{ufun}
Let $\fF\colon\cA\to\cB$ be a non-unital $A_\infty$ functor.

(i) $\fF$ is \emph{(cohomologically) unital} if $\cA$ and $\cB$ are (cohomologically) unital and $H(\fF)$ is unital.

(ii) $\fF$ is \emph{strictly unital} if $\cA$ and $\cB$ are strictly unital and the following properties are satisfied:
\begin{enumerate}
\item $\fF^1(\id_A)=\id_{\fF^0(A)}$ for every $A\in\cA$;
\item $\fF^i(f_i\otimes\cdots\otimes f_1)=0$ if $i>1$ and $f_j=\id_A$ for some $j\in\{1,\dots,i\}$ and some $A\in\cA$.
\end{enumerate}
\end{definition}

%\begin{remark}
%Recalling \autoref{u=cu}, we see that over a field there is no distinction between unital and cohomologically unital.
%\end{remark}

The following definition is only partially standard.

\begin{definition}
A non-unital $A_\infty$ functor $\fF\colon\cA\to\cB$ is a \emph{quasi-isomorphism} (respectively a \emph{\htpiso}) if $\fF^0$ is bijective and $\fF^1\colon\cA(A,A')\to\cB(\fF^0(A),\fF^0(A'))$ is a quasi-isomorphism (respectively a homotopy equivalence) of complexes for every $A,A'\in\cA$.

When $\cB$ is cohomologically unital (respectively unital), $\fF\colon\cA\to\cB$ is a \emph{quasi-equivalence} (respectively a \emph{\htpequiv}) if $H(\fF)$ is essentially surjective and $\fF^1\colon\cA(A,A')\to\cB(\fF^0(A),\fF^0(A'))$ is a quasi-isomorphism (respectively a homotopy equivalence) of complexes for every $A,A'\in\cA$.
\end{definition}

\begin{remark}\label{unithtpiso}
Clearly every {\htpiso} (respectively \htpequiv) is a quasi-isomorphism (respectively quasi-equivalence), and the converse holds if $\K$ is a field. It is also easy to see that, if a non-unital $A_\infty$ functor $\fF\colon\cA\to\cB$ is a quasi-isomorphism (respectively a \htpiso), then $\cA$ is cohomologically unital (respectively unital) if and only if $\cB$ is cohomologically unital (respectively unital), and in this case $\fF$ is cohomologically unital (respectively unital).
\end{remark}

We will denote by $\ACat$ (respectively $\ACatu$, respectively $\ACatc$) the subcategory of $\ACatn$ whose objects are strictly unital (respectively unital, respectively cohomologically unital) $A_\infty$ categories and whose morphisms are strictly unital (respectively cohomologically unital) $A_\infty$ functors. Similarly, $\dgCat$ (respectively $\dgCatu$, respectively $\dgCatc$) denotes the subcategory of $\dgCatn$ whose objects are strictly unital (respectively unital, respectively cohomologically unital) dg categories and whose morphisms are strictly unital (respectively cohomologically unital) dg functors. Moreover, $\ACatdg$ will be the full subcategory of $\ACat$ whose objects are dg categories.

\begin{remark}
In any case $\ACatu$ (respectively $\dgCatu$) is a full subcategory of $\ACatc$ (respectively $\dgCatc$). Moreover, if $\K$ is a field, it follows from \autoref{u=cu} that $\ACatu=\ACatc$ and $\dgCatu=\dgCatc$.
\end{remark}

In order to study the relation between $\ACatu$ and $\dgCat$, later we will need the following result (which comes from an $A_\infty$ categorical version of Yoneda's lemma).

\begin{lem}\label{Yoneda}
Given $\cA\in\ACatu$, there exists a {\htpiso} $\Yon{\cA}\colon\cA\to\rep{\cA}$ with $\rep{\cA}\in\dgCat$.
\end{lem}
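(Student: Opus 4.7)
My plan is to construct $\rep{\cA}$ as the full dg subcategory of the dg category of right $A_\infty$-modules over $\cA$ whose objects are the Yoneda modules, and to prove that the Yoneda $A_\infty$ functor into $\rep{\cA}$ is a homotopy isomorphism by exhibiting an explicit chain homotopy inverse on morphism complexes given by evaluation at the unit. Recall that for any $\cA\in\ACatn$, the category of right $A_\infty$-modules over $\cA$ (equivalently, non-unital $A_\infty$ functors $\cA\opp\to\dgMod{\K}$) forms a non-unital dg category $\text{Mod}(\cA)$, whose morphism complexes are those of pre-natural transformations with the differential inherited from \eqref{eq:natrel} and whose composition is the strict componentwise operation. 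I define $\rep{\cA}$ to be the full subcategory of $\text{Mod}(\cA)$ spanned by the Yoneda modules $\Yon{A}\colon B\mapsto\cA(B,A)$, whose $A_\infty$-module structure is given by the higher multiplications $\m[\cA]{i+1}$. Each $\Yon{A}$ has its own identity pre-natural transformation, so $\rep{\cA}$ is a strictly unital dg category, i.e.\ lies in $\dgCat$.

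Next I define the Yoneda functor $\Yon{\cA}\colon\cA\to\rep{\cA}$ by $\Yon{\cA}^0(A)=\Yon{A}$ and, for $i>0$, let $\Yon{\cA}^i(f_i\otimes\cdots\otimes f_1)$ be the pre-natural transformation whose $j$-th component sends $a_j\otimes\cdots\otimes a_1$ to a sign times $\m[\cA]{i+j+1}(f_i\otimes\cdots\otimes f_1\otimes a_j\otimes\cdots\otimes a_1)$. The $A_\infty$ functor relations \eqref{eq:funrel} for $\Yon{\cA}$ reduce by direct inspection to the $A_\infty$ associativity relations \eqref{eq:catrel} of $\cA$ applied on longer input strings. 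Since $\Yon{\cA}^0$ is a bijection onto the objects of $\rep{\cA}$, the content of the lemma is that each $\Yon{\cA}^1\colon\cA(A,A')\to\rep{\cA}(\Yon{A},\Yon{A'})$ is a homotopy equivalence of complexes. To prove this, choose for each $A\in\cA$ a closed degree $0$ cycle $\unit_A\in\cA(A,A)$ representing $\id_A\in H(\cA)(A,A)$, which exists since $\cA$ is cohomologically unital, and define the evaluation-at-unit map $\mathrm{ev}\colon\rep{\cA}(\Yon{A},\Yon{A'})\to\cA(A,A')$ by $\nat\mapsto\nat^0_A(\unit_A)$. Because $\unit_A$ is closed, $\mathrm{ev}$ is a chain map, and a direct computation gives $\mathrm{ev}\comp\Yon{\cA}^1(f)=\pm\m[\cA]{2}(f\otimes\unit_A)$, which is homotopic to $f$ by \autoref{def:unitAinftycat}. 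This is the easy direction.

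The main obstacle is the other direction: producing a chain homotopy $\Yon{\cA}^1\comp\mathrm{ev}\htp\id$ on the full complex $\rep{\cA}(\Yon{A},\Yon{A'})$, whose elements are pre-natural transformations with arbitrarily many nonzero components $\nat^j$. The strategy is first to upgrade the complex-level unit-homotopy $h_A\colon\cA(\farg,A)\to\cA(\farg,A)$ witnessing $\m[\cA]{1}h_A+h_A\m[\cA]{1}=\m[\cA]{2}(\unit_A\otimes\farg)-\id$ to a homotopy of $A_\infty$-module endomorphisms of $\Yon{A}$, and then to build an explicit chain homotopy between a general $\nat$ and $\Yon{\cA}^1(\mathrm{ev}(\nat))$ whose components are iterated compositions of this $A_\infty$-module homotopy with the components of $\nat$ and with the higher multiplications $\m[\cA]{j}$. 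The cleanest framework is provided by the bar construction of \autoref{subsec:barcobar}: the complex $\rep{\cA}(\Yon{A},\Yon{A'})$ is naturally identified with a component of a complex of coderivations between tensor coalgebras, where the unit-homotopy extends to a coalgebra homotopy, and the desired chain homotopy is then given by a single coherent formula, each component of which is a finite sum over ways of inserting $h_A$ into the tensor factors. The technical heart of the argument is verifying that this formula really defines a chain homotopy satisfying the pre-natural transformation axioms, using only the unital (rather than strictly unital) hypothesis on $\cA$; this is where we must be genuinely more careful than in Seidel's strictly unital Yoneda lemma, because over an arbitrary $\K$ we cannot appeal to a strictification of units.
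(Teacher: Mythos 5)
Your construction of $\rep{\cA}$ and of the Yoneda functor, and your reduction of the lemma to showing that each $\Yon{\cA}^1\colon\cA(A,A')\to\rep{\cA}(\Yon{\cA}^0(A),\Yon{\cA}^0(A'))$ is a homotopy equivalence, follow the standard route; note, however, that the paper does not prove this lemma at all but simply cites \cite[Corollary 1.4]{BLM}, so what you are attempting is a re-proof of the unital $A_\infty$ Yoneda lemma of Bespalov--Lyubashenko--Manzyuk. The easy half of your argument is fine: evaluation at a chosen cycle $\unit_A$ is a chain map, and $\mathrm{ev}\comp\Yon{\cA}^1$ is, up to sign, $\m{2}(\farg\otimes\unit_A)$, hence homotopic to the identity by \autoref{def:unitAinftycat}.

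The genuine gap is the other composite. You never actually produce the chain homotopy between $\id$ and $\Yon{\cA}^1\comp\mathrm{ev}$ on the full complex of prenatural transformations; you describe a plan (``a single coherent formula, each component of which is a finite sum over ways of inserting $h_A$ into the tensor factors'') and then explicitly defer its verification, which is precisely the entire difficulty of the statement. As written, the plan is also not obviously salvageable in that naive form: $h_A$ is only a chain-level homotopy for the action of $\unit_A$ and is not compatible with the higher products, so inserting it into different tensor slots produces error terms involving the $\m{k}$ (and comparisons between different insertions) that do not cancel on the nose; controlling them requires either a coherent system of higher homotopies or a perturbation/filtration argument whose hypotheses must be checked (the paper's \autoref{filthequiv}, for instance, needs null-homotopic graded pieces and split extensions, neither of which holds here). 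The usual shortcuts are unavailable in the generality of the lemma: over an arbitrary $\K$ one cannot strictify the unit of $\cA$, one cannot upgrade Seidel's spectral-sequence argument (which only yields a quasi-isomorphism) to a homotopy equivalence, and the complexes involved need not be h-projective. So the decisive step---exactly the content of \cite[Corollary 1.4]{BLM}---remains unproved in your proposal; either carry out that construction in full (expect a lengthy computation along the lines of BLM's Appendix on the Yoneda lemma) or cite the reference as the paper does.
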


\begin{proof}
See \cite[Corollary 1.4]{BLM}.
\end{proof}

Coming to prenatural transformations, we will consider the following notion.

\begin{definition}
Given $\fF,\fG\colon\cA\to\cB$ in $\ACatn$ with $\cA$ strictly unital, a prenatural transformation $\nat\colon\fF\to\fG$ is \emph{strictly unital} if $\nat^i(f_i\otimes\cdots\otimes f_1)=0$ whenever $i>0$ and there exists $j\in\{1,\dots,i\}$ such that $f_j=\id_A$ for some $A\in\cA$.
\end{definition}

\subsection{Category of functors and equivalence relations}\label{subsec:equivfun}

Given $\cA,\cB\in\ACatn$, there is a natural non-unital $A_\infty$ category $\FunAn(\cA,\cB)$, whose set of objects is $\ACatn(\cA,\cB)$ and whose morphisms are prenatural transformations (see \cite[Section 1.4]{COS}). As for the maps $\m{i}=\m[\FunAn(\cA,\cB)]{i}$, for our aims it is enough to know that $\m{1}(\nat)^n$ is given by the left-hand-side of \eqref{eq:natrel}, for every $\fF,\fG\in\ACatn(\cA,\cB)$ and every prenatural transformation $\nat\colon\fF\to\fG$ of degree $p$.

Observe that, if $\cB$ is (strictly) unital or is a dg category, then $\FunAn(\cA,\cB)$ has the same property. When $\cA$ and $\cB$ are strictly unital (respectively unital), the full $A_\infty$ subcategory of $\FunAn(\cA,\cB)$ whose set of objects is $\ACat(\cA,\cB)$ (respectively $\ACatu(\cA,\cB)$) will be denoted by $\FunA(\cA,\cB)$ (respectively $\FunAu(\cA,\cB)$).

\begin{definition}\label{weakeqhom}
Let $\fF,\fG\in\ACatn(\cA,\cB)$.	

(i) $\fF$ and $\fG$ are \emph{weakly equivalent} (denoted by $\fF\hiso\fG$) if $\cB$ is unital and $\fF\iso\fG$ in the (unital) category $H^0\bigl(\FunAn(\cA,\cB)\bigr)$.

(ii) $\fF$ and $\fG$ are \emph{homotopic} (denoted by $\fF\htp\fG$) if $\fF^0=\fG^0$ and there exists a prenatural transformation $\nat\colon\fF\to\fG$ of degree $0$ such that $\nat^0=0$ and $\fG^i=\fF^i+\m{1}(\nat)^i$ for every $i>0$.
\end{definition}

\begin{remark}\label{htpeqrel}
As it is proved in \cite[Section (1h)]{Sei}, homotopy is an equivalence relation. Since it will be useful later, we also point out the following property, which can be directly deduced from the proof. Let $\fF,\fG,\fH\in\ACatn(\cA,\cB)$ with $\fF\htp\fG$ and $\fG\htp\fH$ through homotopies $\nat_1$ and $\nat_2$, respectively. Assuming that there exists $n>0$ such that $\nat_2^i=0$ for $i<n$, then $\fF\htp\fH$ through a homotopy $\nat$ such that $\nat^i=\nat_1^i$ for $i<n$.
\end{remark}

\begin{remark}\label{nathtp}
If $\fF\in\ACatn(\cA,\cB)$ and $\nat\colon\fF\to\fF$ is a prenatural transformation of degree $0$ such that $\nat^0=0$, then there exists $\fG\in\ACatn(\cA,\cB)$ such that $\fF\htp\fG$ with $\fG^i=\fF^i+\m{1}(\nat)^i$ for every $i>0$, where $\nat$ is regarded as a prenatural transformation $\fF\to\fG$. Indeed, we set $\fG^0:=\fF^0$ and, for $n>0$, we define inductively $\fG^n$ as the sum of $\fF^n$ and of the left-hand-side of \eqref{eq:natrel} (which involves $\fG^i$ only with $i<n$, since $\nat^0=0$). Then $\fG^i=\fF^i+\m{1}(\nat)^i$ for $i>0$ by construction, while the fact that $\fG\in\ACatn(\cA,\cB)$ follows from \autoref{natfun}, taking into account that $\m{1}(\nat)$ is a natural transformation (because $\m{1}\comp\m{1}=0$) of degree $1$ and clearly $\m{1}(\nat)^0=0$.
\end{remark}

Since $\hiso$ is compatible with compositions, from the category $\ACatu$ one can obtain a quotient category $\QACatu$ with the same objects and whose morphisms are given by
\[
\QACatu(\cA,\cB):=\ACatu(\cA,\cB)\quot\hiso.
\]
Similarly one can construct $\QACat$ from $\ACat$.

Later we will need the following results.

\begin{lem}\label{nathiso}
Let $\fF,\fF'\in\ACatn(\cA,\cB)$ with $\cB$ unital. If there exists a natural transformation $\nat\colon\fF\to\fF'$ of degree $0$ such that $H(\nat)\colon H(\fF)\to H(\fF')$ is an isomorphism, then $\fF\hiso\fF'$.
\end{lem}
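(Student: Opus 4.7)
Since $\cB$ is unital, so is $\FunAn(\cA,\cB)$ (as noted just before \autoref{ex:tensor}), and $H^0(\FunAn(\cA,\cB))$ is therefore a genuine (unital) category. The natural transformation $\nat$ is a closed degree-zero morphism $\fF \to \fF'$ in $\FunAn(\cA,\cB)$ (the vanishing of $\m{1}(\nat)$ is exactly \eqref{eq:natrel}), so the conclusion $\fF \hiso \fF'$ is the assertion that $[\nat]$ is invertible in $H^0(\FunAn(\cA,\cB))$. My strategy is to build an inverse $[\sigma]$ explicitly.

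For each $A\in\cA$, the hypothesis that $H(\nat)$ is a natural isomorphism forces $[\nat^0_A]$ to be invertible in $H^0(\cB(\fF^0(A),\fF'^0(A)))$, so I pick closed degree-zero morphisms $\sigma^0_A\in\cB(\fF'^0(A),\fF^0(A))$ representing the inverse classes, together with degree $-1$ witnesses of $\m{2}(\sigma^0_A\otimes\nat^0_A)-\unit_{\fF^0(A)}\in\im\m{1}$ and $\m{2}(\nat^0_A\otimes\sigma^0_A)-\unit_{\fF'^0(A)}\in\im\m{1}$. I then extend $\{\sigma^0_A\}$ to a prenatural transformation $\sigma\colon\fF'\to\fF$ of degree $0$ by constructing $\sigma^n$ inductively in $n\ge1$. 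At step $n$, assuming the naturality equation \eqref{eq:natrel} for $\sigma$ holds in arities $<n$, the equation in arity $n$ takes the form $\m[\cB]{1}(\sigma^n)=X_n$ for an explicit $X_n$ built from $\sigma^{<n}$, from the components of $\fF,\fF'$, and from the $A_\infty$ products. Using the $A_\infty$ relations together with the natural-transformation equation for $\nat$ and the chosen primitives at level $0$, one verifies that $X_n$ is a $\m[\cB]{1}$-cocycle whose cohomology class vanishes, and so a lift $\sigma^n$ exists. An entirely parallel induction produces degree $-1$ prenatural transformations $\mu\colon\fF\to\fF$ and $\mu'\colon\fF'\to\fF'$ with $\m{1}(\mu)=\m{2}(\sigma\otimes\nat)-\unit_\fF$ and $\m{1}(\mu')=\m{2}(\nat\otimes\sigma)-\unit_{\fF'}$ in $\FunAn(\cA,\cB)$, exhibiting $[\sigma]$ as a two-sided inverse of $[\nat]$.

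The main obstacle is controlling the obstructions at each inductive step. This is cleanest using the bar picture of \autoref{subsec:barcobar}: the hom-complex $\FunAn(\cA,\cB)(\fF',\fF)$ identifies with the complex of $(\ext{\fF'},\ext{\fF})$-coderivations $\Tc{\sh\cA}\to\Tcr{\sh\cB}$, equipped with a length filtration whose associated graded forgets all the higher $A_\infty$ data of $\fF,\fF',\nat$. On this associated graded, post- and pre-composition with $\nat$ (the operations $\m{2}(\nat\otimes\farg)$ and $\m{2}(\farg\otimes\nat)$) reduce to the term-wise maps $\m[\cB]{2}(\nat^0_A\otimes\farg)$ and $\m[\cB]{2}(\farg\otimes\nat^0_A)$ in $\cB$, which are quasi-isomorphisms by hypothesis. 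Hence the construction of $\sigma$ and of the homotopies $\mu,\mu'$ becomes a successive lifting along filtered quasi-isomorphisms, and the obstructions in the inductive argument above vanish level by level.
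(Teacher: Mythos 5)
Your reduction of the statement to the invertibility of $[\nat]$ in $H^0(\FunAn(\cA,\cB))$ is correct, and the length filtration on the complexes of prenatural transformations, whose associated graded only sees $\m[\cA]{1}$, $\m[\cB]{1}$ and $\nat^0$, is the right tool; this is essentially the argument behind \cite[Proposition 7.15]{Lyu}, which is all the paper invokes here (its proof of \autoref{nathiso} is a one-line citation), so you are supplying an argument the paper does not spell out. However, as written there is a genuine gap exactly at the point where the generality of the base ring matters. On the associated graded, composition with $\nat$ is not the map $\m[\cB]{2}(\nat^0_A\otimes\farg)$ itself: the graded piece in arity $p$ is a product of complexes of the form $\Hom_\K\bigl(\cA(A_{p-1},A_p)\otimes\cdots\otimes\cA(A_0,A_1),\cB(\farg,\farg)\bigr)$, and the induced map is $\Hom_\K(V,\m[\cB]{2}(\nat^0\otimes\farg))$ with $V$ an arbitrary (in general not h-projective) complex of $\K$-modules. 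Knowing that $\m[\cB]{2}(\nat^0_A\otimes\farg)$ is a quasi-isomorphism ``by hypothesis'' is therefore not enough over an arbitrary commutative ring, since $\Hom_\K(V,\farg)$ does not preserve quasi-isomorphisms. The repair uses precisely the unitality (not mere cohomological unitality) of $\cB$: choosing a closed representative of $[\nat^0_A]^{-1}$ and using the homotopies $\m[\cB]{2}(\unit\otimes\farg)\htp\id$ together with $\m[\cB]{3}$ and the Leibniz rule, one checks that $\m[\cB]{2}(\nat^0_A\otimes\farg)$ is a homotopy equivalence of complexes; homotopy equivalences are preserved by $\Hom_\K(V,\farg)$ and by products, so the associated graded maps are quasi-isomorphisms. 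Without this upgrade your argument only works over a field, which is exactly the situation the paper is trying to move beyond.

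A second, smaller, weakness is the ``level-by-level'' obstruction claim. For a fixed partial solution $\sigma^{<n}$ the obstruction $X_n$ is a cocycle for the full Hom-complex differential (not just $\m[\cB]{1}$), but its class need not vanish for an arbitrary choice of the lower components; in general one must also correct $\sigma^{n-1}$, i.e.\ run a genuine filtered argument, and one needs completeness of the filtration (the Hom spaces are products over the arities) to assemble the infinitely many corrections. The cleanest way to close this is not to build $\sigma$, $\mu$, $\mu'$ by hand, but to deduce from the filtration (complete and exhaustive, so that a quasi-isomorphism on the associated graded yields a quasi-isomorphism of the total complexes via the tower of quotients and a Milnor-sequence argument) that $\m{2}(\nat\otimes\farg)\colon\FunAn(\cA,\cB)(\fG,\fF)\to\FunAn(\cA,\cB)(\fG,\fF')$ is a quasi-isomorphism for $\fG=\fF,\fF'$; then surjectivity on $H^0$ for $\fG=\fF'$ produces $[\sigma]$ with $[\nat]\comp[\sigma]=[\id_{\fF'}]$, and injectivity for $\fG=\fF$ gives $[\sigma]\comp[\nat]=[\id_{\fF}]$, using that $H^0(\FunAn(\cA,\cB))$ is a unital category because $\cB$ is unital. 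With these two repairs your approach becomes a correct, self-contained proof.
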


\begin{proof}
It follows from \cite[Proposition 7.15]{Lyu}.
\end{proof}

\begin{lem}\label{hequiv}
Let $\fF\colon\cA\to\cB$ be a {\htpequiv} (in particular, $\cB$ is unital). Then $\cA$ and $\fF$ are also unital and there exists $\fG\in\ACatu(\cB,\cA)$ such that $\fG\comp\fF\hiso\id_\cA$ and $\fF\comp\fG\hiso\id_\cB$ (hence the image of $\fF$ is an isomorphism in $\QACatu$).
\end{lem}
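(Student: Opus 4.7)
The plan is to first upgrade the data---showing that $\cA$ is unital and $\fF$ is a unital $A_\infty$ functor---and then to construct an inverse $\fG\in\ACatu(\cB,\cA)$ by an inductive procedure, finally concluding via \autoref{nathiso}.

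Since $\fF^0$ is bijective and each $\fF^1$ is a homotopy equivalence (hence a quasi-isomorphism) of complexes, $H(\fF)$ is an isomorphism of non-unital graded $\K$-linear categories, and pulling the units of $H(\cB)$ back along $H(\fF)$ makes $\cA$ cohomologically unital. To promote this to unitality, I choose closed degree $0$ lifts $\unit_A\in\cA(A,A)$ representing the cohomological identities. The $A_\infty$ relation \eqref{eq:funrel} for $\fF$ at $n=2$ implies that the chain map $\fF^1\comp\m[\cA]{2}(\farg\otimes\unit_A)$ is chain homotopic to $\m[\cB]{2}(\farg\otimes\fF^1(\unit_A))$; since $\fF^1(\unit_A)$ is homologous to the unit $\unit_{\fF^0(A)}$ of $\cB$ and $\cB$ is unital, the latter is in turn chain homotopic to $\fF^1$. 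Because $\fF^1$ is a homotopy equivalence of complexes it reflects chain homotopies between maps landing in $\cB(\fF^0(A),\fF^0(A'))$, so $\m[\cA]{2}(\farg\otimes\unit_A)\htp\id$, and symmetrically on the other side. Hence $\cA$ is unital, and since $H(\fF)$ is then an isomorphism of unital graded categories it preserves identities automatically, making $\fF$ a unital $A_\infty$ functor in the sense of \autoref{def:functors}.

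Next, I construct $\fG$ inductively. On objects set $\fG^0:=(\fF^0)^{-1}$; for $\fG^1$ choose chain-level homotopy inverses to the maps $\fF^1$, adjusted by boundaries so that chosen units go to chosen units. Assuming $\fG^1,\ldots,\fG^{n-1}$ have already been defined so that \eqref{eq:funrel} for $\fG$ holds in degrees $<n$ and the strict vanishing on sequences containing a $\unit_B$ is maintained, the failure of \eqref{eq:funrel} at index $n$ is a cocycle in an appropriate hom-complex of $\cA$; the chain-level splittings provided by the homotopy equivalence hypothesis on $\fF^1$, together with the higher components of $\fF$, allow this cocycle to be realized as a boundary, yielding $\fG^n$ with the unitality conditions preserved. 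This produces $\fG\in\ACatu(\cB,\cA)$.

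To verify $\fG\comp\fF\hiso\id_\cA$, I construct by an analogous inductive procedure a natural transformation $\nat\colon\fG\comp\fF\to\id_\cA$ of degree $0$ whose leading term $\nat^0_A\in\cA(A,A)$ is a closed lift of the cohomological identity; then $H(\nat)$ is the identity of $H(\id_\cA)$, so \autoref{nathiso} yields $\fG\comp\fF\hiso\id_\cA$, and the symmetric argument gives $\fF\comp\fG\hiso\id_\cB$. The main obstacle is precisely this iterative construction of $\fG$ and of $\nat$: at each step one must kill a cohomological obstruction by means of explicit chain-level homotopies, which is possible only because $\fF^1$ is a genuine homotopy equivalence rather than merely a quasi-isomorphism. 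Over a field the two notions coincide and the argument reduces to the classical one, but over an arbitrary commutative ring it is exactly the strengthened hypothesis that makes the construction go through, both for $\fG$ and for the connecting natural transformations.
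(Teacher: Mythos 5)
The paper's own proof is a one-line citation of \cite[Theorem 8.8]{Lyu}, so a self-contained argument would be welcome in principle; but your proposal has a genuine gap at the very start. You assume that $\fF^0$ is bijective, which is part of the definition of a {\htpiso}, not of a {\htpequiv}: with the definitions used here, a {\htpequiv} only requires $\cB$ unital, each $\fF^1$ a homotopy equivalence of complexes, and $H(\fF)$ \emph{essentially surjective}. Consequently $\fG^0:=(\fF^0)^{-1}$ does not exist in general, and the second half of your argument --- the inductive construction of $\fG$ and of a natural transformation $\nat\colon\fG\comp\fF\to\id_\cA$ whose components $\nat^0_A$ lift the identity --- collapses as stated. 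To handle essential surjectivity one must choose, for every $B\in\cB$, an object of $\cA$ whose image is merely isomorphic to $B$ in $H^0(\cB)$, and the comparison transformations then have $\nat^0$-components that are isomorphisms in cohomology rather than lifts of identities; this still feeds into \autoref{nathiso}, but it changes the bookkeeping of the induction in an essential way. (Your first paragraph is repairable: cohomological unitality of $\cA$ only needs that $H(\fF)$ is bijective on each hom, and the argument that $\m{2}(\farg\otimes\unit_A)\htp\id$ because postcomposition with the homotopy equivalence $\fF^1$ reflects homotopies is correct, and is indeed where the strengthened hypothesis enters over a general $\K$.)

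Second, even granting bijectivity on objects, the heart of the proof is exactly the step you assert without argument: that the failure of \eqref{eq:funrel} at stage $n$ ``can be realized as a boundary'' using the chain homotopies inverting $\fF^1$, and likewise for the inductive construction of $\nat$. This obstruction-killing is not automatic; it requires explicit homotopy data and formulas (or a perturbation-type argument), and it is precisely the content of \cite[Theorem 8.8]{Lyu}, which is all the paper's proof consists of. Note also that your requirement of ``strict vanishing on sequences containing a $\unit_B$'' is not well posed: $\cB$ is only unital, not strictly unital, so there are no strict units on which to impose such conditions; unitality of $\fG$ in the sense of \autoref{def:functors} is a condition on $H(\fG)$ and in fact follows once $\fG\comp\fF\hiso\id_\cA$ and $\fF\comp\fG\hiso\id_\cB$ are established. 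As it stands, the proposal identifies the right ingredients but neither matches the hypotheses of the lemma nor supplies the inductive construction that carries the actual weight.
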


\begin{proof}
It follows from \cite[Theorem 8.8]{Lyu}.
\end{proof}

\begin{cor}\label{hisocrit}
Let $\fF,\fF'\in\ACatn(\cA,\cB)$ with $\cB$ unital. Then $\fF\hiso\fF'$ in each of the following cases.
\begin{enumerate}
\item $\cB$ is strictly unital and $\fF\htp\fF'$.
\item There exists a {\htpequiv} $\fG\colon\cB\to\cB'$ such that $\fG\comp\fF\hiso\fG\comp\fF'$.
\item There exists a {\htpequiv} $\fH\colon\cA'\to\cA$ such that $\fF\comp\fH\hiso\fF'\comp\fH$.
\end{enumerate}
\end{cor}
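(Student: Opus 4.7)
The plan is to treat the three cases separately.

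Cases~(2) and~(3) will follow formally from \autoref{hequiv} and the compatibility of $\hiso$ with composition. For~(2), applying \autoref{hequiv} to the {\htpequiv} $\fG$ produces a unital $A_\infty$ functor $\fG' \colon \cB' \to \cB$ with $\fG' \comp \fG \hiso \id_\cB$. Composing the hypothesis $\fG \comp \fF \hiso \fG \comp \fF'$ on the left with $\fG'$ and using that $\hiso$ is a congruence for composition yields
\[
\fF \hiso \fG' \comp \fG \comp \fF \hiso \fG' \comp \fG \comp \fF' \hiso \fF'.
\]
Case~(3) is entirely analogous, using a quasi-inverse of $\fH$ produced by \autoref{hequiv} and composing on the right.

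Case~(1) is the essential content. The plan is to invoke \autoref{nathiso}: it suffices to exhibit a natural transformation $\theta \colon \fF \to \fF'$ of degree $0$ whose induced cohomology natural transformation is an isomorphism. I would exploit the strict unitality of $\cB$ together with the equality $\fF^0 = \fF'^0$ built into the definition of homotopy, setting $\theta^0_A := \id_{\fF^0(A)}$. This component is closed and represents the identity of $H(\fF)^0(A) = H(\fF')^0(A)$, so $H(\theta)$ is automatically the identity natural transformation of $H(\fF) = H(\fF')$, hence an isomorphism.

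The higher components $\theta^i$ (for $i \geq 1$) would be defined inductively from the homotopy datum $\nat$ and the lower-level $\theta^{<i}$, enforcing the naturality relation $\m{1}(\theta)^n = 0$ at each level $n \geq 1$. The key observation is that, in $\m{1}(\theta)^n$, the contributions containing $\theta^0 = \id$ inside an $A_\infty$ product $\m[\cB]{k}$ vanish unless $k = 2$ by condition~(2) of \autoref{def:sunitalAinftycat}, and the case $k = 2$ collapses via property~(1) of the same definition to signed copies of $\fF^n$ and $\fF'^n$. The homotopy hypothesis $\fF'^n - \fF^n = \m{1}(\nat)^n$ is then precisely what is needed to absorb these residual terms, leaving a recursion one can solve for $\theta^n$ in terms of $\nat^n$ and known lower-order corrections. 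This is the strictly unital categorical analogue of the argument carried out at the $A_\infty$ algebra level in \cite[Lemma~1.10]{Sei}; its extension to $A_\infty$ categories should be routine.

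The hard part will be the sign bookkeeping and the combinatorial manipulation of $\m{1}(\theta)^n$ in the inductive step of~(1). Strict unitality of $\cB$ is exactly the ingredient that keeps the obstructions confined to the explicit terms $\pm\fF^n \pm \fF'^n$, which are then controlled by the homotopy equation. Once~(1) is in place, parts~(2) and~(3) amount to routine manipulations inside $\ACatn$.
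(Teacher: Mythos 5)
Your proposal is correct and takes essentially the same approach as the paper: parts (2) and (3) are deduced from \autoref{hequiv} together with compatibility of $\hiso$ with composition, and part (1) is obtained by feeding into \autoref{nathiso} a degree $0$ natural transformation $\fF\to\fF'$ whose zeroth component is $\id_{\fF^0(A)}$ and whose cohomology is the identity. The only (minor) difference is that no inductive construction or ``solving'' for the higher components is needed: the paper simply sets $\enat^i:=\nat^i$ for $i>0$ and $\enat^0(A):=\id_{\fF^0(A)}$, and checks directly that strict unitality of $\cB$ reduces the naturality relation to the homotopy equation $\fF'^n=\fF^n+\m{1}(\nat)^n$, which holds by hypothesis.
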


\begin{proof}
If $\cB$ is strictly unital and $\nat\colon\fF\to\fF'$ is a homotopy, it is straightforward to check that the prenatural transformation $\enat\colon\fF\to\fF'$ defined by $\enat^i:=\nat^i$ for $i>0$ and $\enat^0(A):=\id_{\fF^0(A)}$ for every $A\in\cA$ is a natural transformation (see also the paragraph before \cite[Lemma 2.5]{Sei}). Thus part (1) follows from \autoref{nathiso}, whereas parts (2) and (3) are easy consequences of \autoref{hequiv}.
\end{proof}

\section{The non-unital case}\label{sec:adjUI}

Using the notation introduced in \autoref{subsec:barcobar}, we first state the following result.

\begin{prop}\label{dgAnadj}
There is an adjunction
\[%begin{equation}
\adjpair{\VdBn}{\ACatn}{\dgCatn}{\fdgAn},
\]%end{equation}
whose unit is $\ncbi\colon\id_{\ACatn}\to\fdgAn\comp\VdBn$ and whose counit is $\ncb\colon\VdBn\comp\fdgAn=\coB\comp\B\to\id_{\dgCatn}$. Moreover, $\ncbi[\cA]$ (for every $\cA\in\ACatn$) and $\ncb[\cB]$ (for every $\cB\in\dgCatn$) are {\htpiso}s.
\end{prop}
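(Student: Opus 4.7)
My approach is to reduce the adjunction to the already-established bar-cobar adjunction via the fully faithfulness of $\Bi$, and then to establish the htpiso statements separately by explicit chain homotopies on the level of hom-complexes.

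For the adjunction, the key point is that, by the very definition $\B:=\Bi\rest{\dgCatn}$, one has $\B=\Bi\comp\fdgAn$. Combining this with the existing adjunction $\coB\dashv\B$ and with the fully faithfulness of $\Bi\colon\ACatn\to\dgcoCatn$, one obtains, for each $\cA\in\ACatn$ and $\cB\in\dgCatn$, natural bijections
\[
\dgCatn(\VdBn\cA,\cB)=\dgCatn(\coB\Bi\cA,\cB)\iso\dgcoCatn(\Bi\cA,\B\cB)=\dgcoCatn(\Bi\cA,\Bi\fdgAn\cB)\iso\ACatn(\cA,\fdgAn\cB).
\]
Tracing $\id_{\VdBn\cA}$ and $\id_{\fdgAn\cB}$ through these bijections identifies the counit on $\dgCatn$ with $\ncb$ and the unit at $\cA$ with the unique $A_\infty$ functor whose image under $\Bi$ is $\nbc[\Bi(\cA)]$, which is exactly $\ncbi[\cA]$ by its defining property.

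For the htpiso claims, the observation that both $\ncb[\cB]$ and $\ncbi[\cA]$ are the identity on objects reduces everything to showing that the induced maps on each hom-complex are homotopy equivalences of complexes of $\K$-modules. For $\ncb[\cB]$ with $\cB\in\dgCatn$, this is the classical bar-cobar statement for (non-unital) dg categories: the cobar-bar bicomplex admits an explicit ``extra-degeneracy'' contracting homotopy built from tensor-factor insertions and projections, and this homotopy yields a chain homotopy inverse to the augmentation over an arbitrary commutative ring. For $\ncbi[\cA]$ with $\cA\in\ACatn$, the underlying graded $\K$-module of the cobar-bar complex is unchanged from the dg case, and only the differential acquires additional contributions from the higher operations $\m{i}$ with $i>2$. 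Filtering the cobar-bar complex by the total number of $\sh{\cA}$-factors produces a complete, exhaustive filtration whose associated graded reduces to the strict dg situation; the classical contracting homotopy can then be lifted through the filtration, either inductively on the filtration degree or by a perturbation-lemma argument, to give a genuine homotopy inverse in the $A_\infty$ setting.

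The main obstacle is the htpiso conclusion rather than the formal adjunction: over an arbitrary commutative ring $\K$, quasi-isomorphisms need not be homotopy equivalences, so it does not suffice to exhibit acyclic mapping cones via spectral-sequence or collapsing arguments. Explicit chain-level homotopies must be written down, and in the $A_\infty$ case their compatibility with all higher operations $\m{i}$ must be verified. This is consistent with the remark in the introduction that \autoref{sec:adjUI} is the more technical part of the paper, since the construction must be set up with enough care to also serve the parametrized variant required in \autoref{sec:intHoms}.
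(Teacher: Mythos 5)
Your proposal follows essentially the same route as the paper. The formal part — deducing the adjunction from $\coB\dashv\B$ together with the full faithfulness of $\Bi$, and identifying the unit with $\ncbi[\cA]$ and the counit with $\ncb[\cB]$ by tracing identities — is exactly how the paper proceeds (it delegates this to the proof of \cite[Proposition 1.22]{COS}). Likewise, your plan for the homotopy statements — filter the hom-complexes of $\coB(\Bi(\cA))$ by the number of $\sh{\cA}$-factors, contract the associated graded explicitly, and lift the contraction through the filtration — is precisely the paper's strategy, implemented by the contracting homotopy $r$ of \autoref{null}, the lifting criterion \autoref{filthequiv}, and \autoref{hiso} (with the counit then obtained by adapting \cite{COS}, or, as you implicitly arrange, from the unit via a triangle identity). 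So the approach matches; what separates your outline from a proof is exactly the content of those two lemmas, which you assert rather than supply — and in the paper their verification (especially the construction of $r$) is the bulk of the section.

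Three points to fix or make explicit. First, the associated graded of the factor-count filtration is not ``the strict dg situation'': $\cod^2$, coming from $\m{2}$, also strictly lowers the number of factors, so the graded differential is only $\cod^1+\com$, i.e.\ the zero-multiplication case. This is what makes the scheme work, and it also means that no ``compatibility of the homotopies with all higher operations $\m{i}$'' needs to be checked, contrary to your closing paragraph: the whole point of \autoref{filthequiv} (or of a perturbation-lemma argument) is that one only needs the filtration steps to split as graded $\K$-modules and the graded pieces to be null-homotopic. Second, that splitness hypothesis is the genuinely delicate point over an arbitrary $\K$ — \autoref{filtqiso} shows the homotopy-equivalence conclusion fails without it — and you never mention it; here it holds for free because the filtration arises from the direct-sum decomposition of $\coB(\Bi(\cA))(A,A')$ by number of factors, but a complete argument must say so. Third, the ``classical extra-degeneracy contraction'' you invoke is not simply quoted in the paper: it is constructed and verified by hand, and moreover in a two-variable generality (the map \eqref{eq:AB} and \autoref{hiso}, for $\red{\aug{\cA}\otimes\aug{\cB}}$) that is needed later for \autoref{hiso2} and \autoref{IHMC2}; your argument, run only for \eqref{eq:mapimp}, would not automatically provide that stronger statement.
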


It is easy to see that the same proof of \cite[Proposition 1.22]{COS} can be adapted to work when $\K$ is an arbitrary commutative ring and with {\htpiso} in place of quasi-isomorphism. However, when dealing with the strictly unital case in \autoref{sec:infequiv}, it will be useful to know the crucial proof of the fact that, for every $\cA\in\ACatn$ and every $A,A'\in\cA$,
\begin{equation}\label{eq:mapimp}
\ncbi[\cA]^1\colon\cA(A,A')\to\VdBn(\cA)(A,A')=\coB(\Bi(\cA))(A,A')
\end{equation}
is a homotopy equivalence of complexes. Actually we will prove a more general statement in \autoref{ncbiqiso}, from which we will also deduce a result that will be needed in \autoref{sec:IntHom}. To this aim, we first prove a technical result in \autoref{subsec:useful} and then introduce the morphism which replaces \eqref{eq:mapimp} in a more general setting in \autoref{subsec:themor}.

\begin{remark}\label{dgAuadj}
In view of \autoref{unithtpiso}, the adjunction of \autoref{dgAnadj} restricts to adjunctions
\[
\adjpair{\VdBn}{\ACatc}{\dgCatc}{\fdgAn}, \qquad \adjpair{\VdBn}{\ACatu}{\dgCatu}{\fdgAn}.
\]
\end{remark}

\subsection{A criterion for homotopy equivalence}\label{subsec:useful}

We will need the following general and possibly known result about filtered complexes. We include the proof since we could not find a suitable reference.

\begin{lem}\label{filthequiv}
Let $C$ be a complex of $\K$-modules endowed with an ascending and exhaustive filtration $\filt{n}C$ (with $n\in\NN$) such that $\filt{0}C=0$. Assume that for every $n>1$ the exact sequence of complexes
\[
0\to\filt{n-1}C\to\filt{n}C\to\gr{n}C:=(\filt{n}C)/(\filt{n-1}C)\to0
\]
splits as a sequence of graded modules and the complex $\gr{n}C$ is null-homotopic. Then the inclusion $\filt{1}C\mono C$ is a homotopy equivalence of complexes.
\end{lem}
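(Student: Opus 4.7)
The plan is to realize the inclusion $\filt{1}C\mono C$ as the colimit of a tower of deformation retractions $\filt{n-1}C\mono\filt{n}C$ (for $n>1$), each built from the graded splitting of the filtration together with the contracting homotopy of $\gr{n}C$. First I would construct a levelwise retraction with an accompanying chain homotopy, and then patch them into global data; the key to the patching will be arranging that the retraction fixes, and its chain homotopy kills, the subcomplex of the previous step.

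For the levelwise step, I would fix a splitting $\filt{n}C\iso\filt{n-1}C\oplus\gr{n}C$ of graded modules; under this identification the differential on $\filt{n}C$ has the form $d(a,b)=(da+\phi_n(b),db)$ for a degree $1$ map $\phi_n\colon\gr{n}C\to\filt{n-1}C$ satisfying $d\phi_n+\phi_n d=0$ (this is forced by $d^2=0$). Writing $h_n$ for a contracting homotopy of $\gr{n}C$, so that $dh_n+h_nd=\id$, I would define
\[
r_n(a,b):=a-\phi_n h_n(b),\qquad k_n(a,b):=(0,h_n(b)).
\]
A routine check using $d\phi_n=-\phi_n d$ and $dh_n+h_nd=\id$ would show that $r_n\colon\filt{n}C\to\filt{n-1}C$ is a chain map, that $dk_n+k_nd=\id_{\filt{n}C}-\iota_n\comp r_n$ where $\iota_n\colon\filt{n-1}C\mono\filt{n}C$ is the inclusion, and, crucially, that $r_n$ restricts to the identity on $\filt{n-1}C$ while $k_n$ vanishes on $\filt{n-1}C$.

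Having these, I would iterate: set $R_n:=r_2\comp\cdots\comp r_n\colon\filt{n}C\to\filt{1}C$ and define $K_n\colon\filt{n}C\to\filt{n}C$ inductively by $K_1:=0$ and $K_n:=k_n+K_{n-1}\comp r_n$ (viewing $K_{n-1}$ as landing in $\filt{n-1}C\subseteq\filt{n}C$). An inductive computation, using that $r_n$ is a chain map and the inductive identity $dK_{n-1}+K_{n-1}d=\id_{\filt{n-1}C}-\iota'\comp R_{n-1}$, will give $dK_n+K_nd=\id_{\filt{n}C}-\iota\comp R_n$. Because $r_n$ is the identity on $\filt{n-1}C$ and $k_n$ vanishes there, the $R_n$ and $K_n$ are compatible with the filtration inclusions, so exhaustiveness of the filtration allows them to assemble into a global retraction $R\colon C\to\filt{1}C$ and a chain homotopy $K\colon C\to C$ satisfying $R\comp\iota=\id_{\filt{1}C}$ and $\id_C-\iota\comp R=dK+Kd$, which is exactly the desired homotopy equivalence. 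I do not anticipate a serious obstacle; the main point requiring attention is the inductive bookkeeping ensuring that $K_n$ extends $K_{n-1}$ (guaranteed by $k_n\rest{\filt{n-1}C}=0$), with the rest being straightforward sign tracking.
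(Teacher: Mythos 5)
Your proposal is correct and follows essentially the same route as the paper: both write the differential of $\filt{n}C$ in upper-triangular form via the graded splitting, use a contracting homotopy $h_n$ of $\gr{n}C$ to build a retraction and homotopy step by step, arrange compatibility with the previous filtration stage, and conclude by exhaustiveness. In fact, unwinding your recursion, $R_n$ and $K_n$ coincide with the maps $p_n$ and $h_{\le n}$ constructed in the paper's induction; your factorization through the one-step data $(r_n,k_n)$ is just a slightly cleaner way of organizing the same verification.
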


\begin{proof}
We can assume that as a graded module $C=\bigoplus_{n>0}C_n$ with $\filt{n}C=\bigoplus_{0<m\le n}C_m$ and $\gr{n}C=C_n$. We will write $C_{\le n}$ or $C_{<n+1}$ instead of $\filt{n}C$. For $n>0$ we will denote by $d_{\le n}$ the differential of $C_{\le n}$ (which is the restriction of the differential $d$ of $C$) and by $d_n$ the induced differential on $C_n$. Observe that $d_1=d_{\le 1}$ and
\[
d_{\le n}=
\begin{pmatrix}
d_{<n} & e_n \\
0 & d_n
\end{pmatrix}
\colon C_{\le n}=C_{<n}\oplus C_n\to C_{\le n}=C_{<n}\oplus C_n
\]
for $n>1$, where $e_n\colon C_n\to C_{<n}$ is a degree $1$ map such that
\begin{equation}\label{eq:de}
d_{<n}\comp e_n=-e_n\comp d_n.
\end{equation}
Denoting by $i_n\colon C_1\mono C_{\le n}$ the inclusion, we claim that there exist morphisms of complexes $p_n\colon C_{\le n}\to C_1$ and degree $-1$ maps $h_{\le n}\colon C_{\le n}\to C_{\le n}$ such that
\begin{gather}\label{eq:htpinv1}
\id_{C_1}=p_n\comp i_n, \\
\label{eq:htpinv}
\id_{C_{\le n}}=i_n\comp p_n+d_{\le n}\comp h_{\le n}+h_{\le n}\comp d_{\le n}
\end{gather}
for every $n>0$, and satisfying the compatibility conditions
\begin{equation}\label{eq:compat}
p_n\rest{C_{<n}}=p_{n-1}, \qquad h_{\le n}\rest{C_{<n}}=h_{<n}
\end{equation}
for every $n>1$. Assuming this, one can conclude the proof very easily. Indeed, the maps $p\colon C\to C_1$ and $h\colon C\to C$ such that $p\rest{C_{\le n}}=p_n$ and $h\rest{C_{\le n}}=h_{\le n}$ for every $n>0$ are well defined (and unique), thanks to \eqref{eq:compat}. Moreover, since each $p_n$ is a morphism of complexes and \eqref{eq:htpinv1} and \eqref{eq:htpinv} hold, also $p$ is a morphism of complexes and (denoting by $i\colon C_1\mono C$ the inclusion) we obtain
\[
\id_{C_1}=p\comp i, \qquad \id_C=i\comp p+d\comp h+h\comp d,
\]
thus proving that $i$ is a homotopy equivalence.

So it remains to prove the claim, and to this purpose we proceed by induction on $n$. As we can obviously take $p_1=i_1=\id_{C_1}$ and $h_{\le 1}=0$, we assume that $n>1$ and that the maps $p_m$ and $h_m$ with all the required properties have already been chosen for $0<m<n$. In particular, $p_{n-1}$ is a morphism of complexes and
\begin{gather}\label{eq:htpinva1}
\id_{C_1}=p_{n-1}\comp i_{n-1}, \\
\label{eq:htpinva}
\id_{C_{<n}}=i_{n-1}\comp p_{n-1}+d_{<n}\comp h_{<n}+h_{<n}\comp d_{<n}.
\end{gather}
Moreover, since $C_n$ is null-homotopic, there exists a degree $-1$ map $h_n\colon C_n\to C_n$ such that
\begin{equation}\label{eq:htpinvb}
\id_{C_n}=d_n\comp h_n+h_n\comp d_n.
\end{equation}
Setting
\begin{gather*}
p_n:=
\begin{pmatrix}
p_{n-1} & -p_{n-1}\comp e_n\comp h_n
\end{pmatrix}
\colon C_{\le n}=C_{<n}\oplus C_n\to C_1, \\
h_{\le n}:=
\begin{pmatrix}
h_{<n} & -h_{<n}\comp e_n\comp h_n \\
0 & h_n
\end{pmatrix}
\colon C_{\le n}=C_{<n}\oplus C_n\to C_{\le n}=C_{<n}\oplus C_n,
\end{gather*}
\eqref{eq:compat} is certainly satisfied. Using, beyond the fact that $p_{n-1}$ is a morphism of complexes, \eqref{eq:de} and \eqref{eq:htpinvb}, we obtain also
\begin{multline*}
d_1\comp p_n=d_1\comp
\begin{pmatrix}
p_{n-1} & -p_{n-1}\comp e_n\comp h_n
\end{pmatrix}
=
\begin{pmatrix}
d_1\comp p_{n-1} & -d_1\comp p_{n-1}\comp e_n\comp h_n
\end{pmatrix}
\\
=
\begin{pmatrix}
p_{n-1}\comp d_{<n} & -p_{n-1}\comp d_{<n}\comp e_n\comp h_n
\end{pmatrix}
=
\begin{pmatrix}
p_{n-1}\comp d_{<n} & p_{n-1}\comp e_n\comp d_n\comp h_n
\end{pmatrix}
\\
=
\begin{pmatrix}
p_{n-1}\comp d_{<n} & p_{n-1}\comp e_n\comp(\id_{C_n}-h_n\comp d_n)
\end{pmatrix}
=
\begin{pmatrix}
p_{n-1} & -p_{n-1}\comp e_n\comp h_n
\end{pmatrix}
\comp
\begin{pmatrix}
d_{<n} & e_n \\
0 & d_n
\end{pmatrix}
=p_n\comp d_{\le n},
\end{multline*}
which shows that $p_n$ is a morphism of complexes. Taking into account that $p_n\comp i_n=p_{n-1}\comp i_{n-1}$, \eqref{eq:htpinv1} follows directly from \eqref{eq:htpinva1}. Finally, by \eqref{eq:htpinva}, \eqref{eq:htpinvb} and \eqref{eq:de},
\begin{multline*}
d_{\le n}\comp h_{\le n}+h_{\le n}\comp d_{\le n}=
\begin{pmatrix}
d_{<n} & e_n \\
0 & d_n
\end{pmatrix}
\comp
\begin{pmatrix}
h_{<n} & -h_{<n}\comp e_n\comp h_n \\
0 & h_n
\end{pmatrix}
+
\begin{pmatrix}
h_{<n} & -h_{<n}\comp e_n\comp h_n \\
0 & h_n
\end{pmatrix}
\comp
\begin{pmatrix}
d_{<n} & e_n \\
0 & d_n
\end{pmatrix}
\\
=
\begin{pmatrix}
d_{<n}\comp h_{<n}+h_{<n}\comp d_{<n} & -d_{<n}\comp h_{<n}\comp e_n\comp h_n+e_n\comp h_n+h_{<n}\comp e_n-h_{<n}\comp e_n\comp h_n\comp d_n \\
0 & d_n\comp h_n+h_n\comp d_n
\end{pmatrix}
\\
=
\begin{pmatrix}
\id_{C_{<n}}-i_{n-1}\comp p_{n-1} & (h_{<n}\comp d_{<n}+i_{n-1}\comp p_{n-1})\comp e_n\comp h_n+h_{<n}\comp e_n\comp d_n\comp h_n \\
0 & \id_{C_n}
\end{pmatrix}
\\
=
\begin{pmatrix}
\id_{C_{<n}}-i_{n-1}\comp p_{n-1} & i_{n-1}\comp p_{n-1}\comp e_n\comp h_n \\
0 & \id_{C_n}
\end{pmatrix}
=\id_{C_{\le n}}-i_n\comp p_n,
\end{multline*}
which proves \eqref{eq:htpinv}.
\end{proof}

\begin{remark}\label{filtqiso}
The assumption that the sequence splits in \autoref{filthequiv} is essential. To see this, just consider the case in which $C$ is given by a non-split short exact sequence $0\to M\mor{i}N\to P\to0$ and $\filt{1}C$ is the subcomplex $0\to M\isomor i(M)\to0$, while $\filt{n}C=C$ for $n>1$. On the other hand, one can very easily prove that the inclusion $\filt{1}C\mono C$ is a quasi-isomorphism of complexes, even without that assumption.
\end{remark}

\subsection{The relevant morphism}\label{subsec:themor}

In this section, we fix two non-unital $A_\infty$ categories $\cA$ and $\cB$, and we denote by $\cC$ the non-unital dg cocategory $\red{\aug{\Bi(\cA)}\otimes\aug{\Bi(\cB)}}$. We will also consider the dg quiver $\red{\aug{\cA}\otimes\aug{\cB}}$.

Our aim here is to construct a suitable morphism of complexes
\[
\red{\aug{\cA}\otimes\aug{\cB}}\bigl((A,B),(A',B')\bigr)\to\coB(\cC)\bigl((A,B),(A',B')\bigr)
\]
where $A,A'\in\cA$ and $B,B'\in\cB$. Its precise definition is in \eqref{eq:AB} below. 

Moving in this direction, note that, given $A,A'\in\cA$ and $B,B'\in\cB$, we have
\begin{equation}\label{eq:HomAB}
\red{\aug{\cA}\otimes\aug{\cB}}\bigl((A,B),(A',B')\bigr)=\cA(A,A')\otimes\cB(B,B')\oplus\cA(A,A')^{\delta_{B,B'}}\oplus\cB(B,B')^{\delta_{A,A'}}
\end{equation}
as a dg $\K$-module. In order to explicitly describe also $\coB(\cC)\bigl((A,B),(A',B')\bigr)$, we first introduce some notation.

For every $i,j\in\NN$, we denote by $C_{i,j}\bigl((A,B),(A',B')\bigr)$ the graded $\K$-module
\[
\bigoplus_{\substack{A=A_0,A_1,\dots,A_{i-1},A_i=A'\in\cA\\ B=B_0,B_1,\dots,B_{j-1},B_j=B'\in\cB}}\sh{\cA(A_{i-1},A_i)}\otimes\cdots\otimes\sh{\cA(A_0,A_1)}\otimes\sh{\cB(B_{j-1},B_j)}\otimes\cdots\otimes\sh{\cB(B_0,B_1)},
\]
which is meant to be $0$ when $i=0$ and $A\ne A'$ or $j=0$ and $B\ne B'$ or $i=j=0$. Given $m_1,n_1,\dots,m_l,n_l\in\NN$, we denote by $C_{(m_1,\dots,m_l),(n_1,\dots,n_l)}\bigl((A,B),(A',B')\bigr)$ the graded $\K$-module
\[
\bigoplus_{\substack{A=A_0,A_1,\dots,A_{l-1},A_l=A'\in\cA\\ B=B_0,B_1,\dots,B_{l-1},B_l=B'\in\cB}}\sh[-1]{C_{m_l,n_l}\bigl((A_{l-1},B_{l-1}),(A_l,B_l)\bigr)}\otimes\cdots\otimes\sh[-1]{C_{m_1,n_1}\bigl((A_0,B_0),(A_1,B_1)\bigr)}
\]
(in particular, $C_{(i),(j)}\bigl((A,B),(A',B')\bigr)=\sh[-1]{C_{i,j}\bigl((A,B),(A',B')\bigr)}$). For every $m,n\ge0$ we define moreover
\[
\grVn{m,n}\bigl((A,B),(A',B')\bigr):=\bigoplus_{\substack{m_1+\cdots+m_l=m\\ n_1+\cdots+n_l=n}}C_{(m_1,\dots,m_l),(n_1,\dots,n_l)}\bigl((A,B),(A',B')\bigr).
\]
Note that, in particular, $\grVn{0,0}\bigl((A,B),(A',B')\bigr)=0$ and
\begin{gather}
\label{eq:Hom10}
\grVn{1,0}\bigl((A,B),(A',B')\bigr)=\cA(A,A')^{\delta_{B,B'}} \\
\label{eq:Hom01}
\grVn{0,1}\bigl((A,B),(A',B')\bigr)=\cB(B,B')^{\delta_{A,A'}} \\
\begin{split}\label{eq:Hom11}
\grVn{1,1}\bigl((A,B),(A',B')\bigr)= & \cA(A,A')\otimes\cB(B,B')\oplus\cB(B,B')\otimes\cA(A,A') \\
 & \oplus\sh[-1]{(\sh{\cA(A,A')}\otimes\sh{\cB(B,B')})}
\end{split}
\end{gather}
Then the non-unital dg category $\coB(\cC)$ has the same objects as $\red{\aug{\cA}\otimes\aug{\cB}}$, and
\[
\coB(\cC)\bigl((A,B),(A',B')\bigr)=\bigoplus_{m,n\ge0}\grVn{m,n}\bigl((A,B),(A',B')\bigr)
\]
as a graded $\K$-module for every $A,A'\in\cA$ and $B,B'\in\cB$. While the composition in $\coB(\cC)$ is the natural one given by the tensor product of the cobar construction, the differential on $\coB(\cC)\bigl((A,B),(A',B')\bigr)$ extends (in such a way that the graded Leibnitz rule holds) $\cod+\com$, where $\cod$ and $\com$ are determined, respectively, by the differential and the comultiplication on the dg cocategory $\cC$. More precisely, given
\[
c=\sh[-1]{(\sh{f_m}\otimes\cdots\otimes\sh{f_1}\otimes\sh{g_n}\otimes\cdots\otimes\sh{g_1})}\in C_{(m),(n)}\bigl((A,B),(A',B')\bigr)
\]
with the $f_i$ and the $g_j$ homogeneous, we have
\[
\com(c)=\sum_{(i,j)\in I_{m,n}}(-1)^{\deg(c_{\le i,\emptyset})\degp(c_{\emptyset,>j})+\deg(c_{>i,\emptyset})}c_{>i,>j}\otimes c_{\le i,\le j},
\]
where $I_{m,n}:=\{0,\dots,m\}\times\{0,\dots,n\}\setminus\{(0,0),(m,n)\}$. Here $>i$ and $\le i$ (respectively $>j$ and $\le j$) denote the (descending) intervals $[m,i)=[m.i+1]$ and $[i,1]$ (respectively $[n,j)$ and $[j,1]$), and in general
\[
c_{[i',i),[j',j)}:=\sh[-1]{(\sh{f_{i'}}\otimes\cdots\otimes\sh{f_{i+1}}\otimes\sh{g_{j'}}\otimes\cdots\otimes\sh{g_{j+1}})}
\]
for $1\le i\le i'\le m$ and $1\le j\le j'\le n$. Obviously the empty interval is denoted also by $\emptyset$, while the full interval $[m,1]$ or $[n,1]$ will be denoted by $*$. Clearly
\[
\com\Bigl(\grVn{m,n}\bigl((A,B),(A',B')\bigr)\Bigr)\subseteq\grVn{m,n}\bigl((A,B),(A',B')\bigr)
\]
for every $m,n\ge0$. On the other hand, the component $\cod^1$ of $\cod$ induced from $\m[\cA]{1}$ and $\m[\cB]{1}$ is given on $c$ as above by 
\[
\cod^1(c)=\sum_{i=1}^m(-1)^{\degp(c_{>i,\emptyset})}\cod^1_{i,0}(c)+\sum_{j=1}^n(-1)^{\degp(c_{*,>j})}\cod^1_{0,j}(c),
\]
where
\begin{gather*}
\cod^1_{i,0}(c):=\sh[-1]{(\sh{f_m}\otimes\cdots\otimes\sh{f_{i+1}}\otimes\sh{\m[\cA]{1}(f_i)}\otimes\sh{f_{i-1}}\otimes\cdots\otimes\sh{f_1}\otimes\sh{g_n}\otimes\cdots\otimes\sh{g_1})}, \\
\cod^1_{0,j}(c):=\sh[-1]{(\sh{f_m}\otimes\cdots\otimes\sh{f_1}\otimes\sh{g_n}\otimes\cdots\otimes\sh{g_{j+1}}\otimes\sh{\m[\cB]{1}(g_j)}\otimes\sh{g_{j-1}}\otimes\cdots\otimes\sh{g_1})}.
\end{gather*}
Hence also in this case
\[
\cod^1\Bigl(\grVn{m,n}\bigl((A,B),(A',B')\bigr)\Bigr)\subseteq\grVn{m,n}\bigl((A,B),(A',B')\bigr)
\]
for every $m,n\ge0$. As for the other components $\cod^i$ of $\cod$ induced from $\m[\cA]{i}$ and $\m[\cB]{i}$ with $i>1$, for our purposes it is enough to observe that
\[
\cod^i\Bigl(\grVn{m,n}\bigl((A,B),(A',B')\bigr)\Bigr)\subseteq\bigoplus_{0<m'<m}\grVn{m',n}\bigl((A,B),(A',B')\bigr)\bigoplus_{0<n'<n}\grVn{m,n'}\bigl((A,B),(A',B')\bigr)
\]
for every $m,n\ge0$. This implies that $\cod^1+\com$ is a differential (often denoted simply by $d$) on each $\grVn{m,n}\bigl((A,B),(A',B')\bigr)$, which will be regarded as a complex in this way. Moreover,
\begin{gather*}
\grVn{*,0}\bigl((A,B),(A',B')\bigr):=\bigoplus_{m\ge0}\grVn{m,0}\bigl((A,B),(A',B')\bigr) \\
\grVn{0,*}\bigl((A,B),(A',B')\bigr):=\bigoplus_{n\ge0}\grVn{0,n}\bigl((A,B),(A',B')\bigr) \\
\grVn{>0}\bigl((A,B),(A',B')\bigr):=\bigoplus_{m,n>0}\grVn{m,n}\bigl((A,B),(A',B')\bigr)
\end{gather*}
are subcomplexes of $\coB(\cC)\bigl((A,B),(A',B')\bigr)$, and obviously there is a decomposition
\[
\coB(\cC)\bigl((A,B),(A',B')\bigr)=\grVn{*,0}\bigl((A,B),(A',B')\bigr)\oplus\grVn{0,*}\bigl((A,B),(A',B')\bigr)\oplus\grVn{>0}\bigl((A,B),(A',B')\bigr).
\]

Now, for every $A,A'\in\cA$ and $B,B'\in\cB$, we will consider the maps (see \eqref{eq:Hom11})
\begin{equation}\label{eq:inc11}
\begin{split}
\cA(A,A')\otimes\cB(B,B') & \to\grVn{1,1}\bigl((A,B),(A',B')\bigr) \\
f\otimes g & \mapsto(0,(-1)^{\deg(f)\deg(g)}g\otimes f,0)
\end{split}
\end{equation}
and
\begin{equation}\label{eq:pro11}
\begin{split}
\grVn{1,1}\bigl((A,B),(A',B')\bigr) & \to\cA(A,A')\otimes\cB(B,B') \\
\bigl(f\otimes g,g'\otimes f',\sh[-1]{(\sh{f''}\otimes\sh{g''})}\bigr) & \mapsto f\otimes g+(-1)^{\deg(f')\deg(g')}f'\otimes g'
\end{split}
\end{equation}
It is easy to check that \eqref{eq:inc11} and \eqref{eq:pro11} are morphisms of complexes. Remembering \eqref{eq:HomAB}, it is then clear that \eqref{eq:Hom10}, \eqref{eq:Hom01} and \eqref{eq:inc11} define the morphism of complexes
\begin{equation}\label{eq:AB}
\red{\aug{\cA}\otimes\aug{\cB}}\bigl((A,B),(A',B')\bigr)\to\coB(\cC)\bigl((A,B),(A',B')\bigr)=\bigoplus_{m,n\ge0}\grVn{m,n}\bigl((A,B),(A',B')\bigr)
\end{equation}
we are interested in. 

\subsection{A general result}\label{ncbiqiso}

In order to prove the properties of \eqref{eq:AB} we first need the following key result, whose proof is rather technical.

\begin{lem}\label{null}
Let $A,A'\in\cA$ and $B,B'\in\cB$.
\begin{enumerate}
\item The maps \eqref{eq:inc11} and \eqref{eq:pro11} are mutually inverse homotopy equivalences of complexes.
\item Given $m,n\in\NN$ with $m>1$ or $n>1$, the complex $\grVn{m,n}\bigl((A,B),(A',B')\bigr)$ is null-homotopic.
\end{enumerate}
\end{lem}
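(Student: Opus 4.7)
The plan is to handle parts (1) and (2) separately by constructing explicit chain homotopies, with sign bookkeeping being the main source of technical work.

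\textbf{Part (1).} Write $V = \cA(A,A')$ and $W = \cB(B,B')$. The equality $\text{pro} \circ \text{inc} = \id_{V \otimes W}$ is immediate from the Koszul sign rule: the composite involves two transpositions of $f$ and $g$, producing signs $(-1)^{\deg f \deg g}$ and $(-1)^{\deg g \deg f}$, which cancel. For the reverse composite I would construct a degree $-1$ homotopy $h\colon \grVn{1,1} \to \grVn{1,1}$ supported on the $W \otimes V$ summand and valued in the $\sh[-1](\sh V \otimes \sh W)$ summand via a sign-twisted desuspension $g' \otimes f' \mapsto \pm\sh[-1](\sh f' \otimes \sh g')$, vanishing on the other two summands. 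The homotopy identity $dh + hd = \id - \text{inc} \circ \text{pro}$ then reduces to computing $d = \cod^1 + \com$ on $\sh[-1](\sh f \otimes \sh g)$: evaluating $\com$ at the two indices $(i,j) \in I_{1,1} = \{(1,0),(0,1)\}$ yields contributions proportional to $f \otimes g \in V \otimes W$ and $g \otimes f \in W \otimes V$, whose combination (after matching the Koszul signs in \eqref{eq:inc11} and \eqref{eq:pro11}) supplies precisely the cross terms needed, while the $\cod^1$-terms cancel against $h \circ \cod^1$ by naturality of desuspension.

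\textbf{Part (2).} I would exhibit an explicit degree $-1$ contracting homotopy $h\colon \grVn{m,n} \to \grVn{m,n}$ with $dh + hd = \id$. The natural candidate is an ``extra-degeneracy'' type operation that merges two adjacent blocks into one via desuspension: for a tensor $\sh[-1]c_l \otimes \cdots \otimes \sh[-1]c_1$ of $l$ blocks with $c_i \in C_{m_i,n_i}$, define $h$ to collapse a designated pair (say $\sh[-1]c_2 \otimes \sh[-1]c_1$) into a single block in $\sh[-1]C_{m_1+m_2, n_1+n_2}$ by reshuffling the internal $\sh\cA$- and $\sh\cB$-factors through the appropriate Koszul sign, setting $h = 0$ when $l = 1$. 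The identity $\com h + h \com = \id$ then follows on each block-count piece because $\com$ is exactly the block-splitting operation inverse to the merging $h$, while the $\cod^1 h + h \cod^1$ contributions cancel by the fact that $\cod^1$ commutes with desuspension up to the expected sign. The hypothesis $m > 1$ or $n > 1$ enters to ensure that each block decomposition admits a nontrivial merging (i.e., at least one block can be split nontrivially by $\com$), which is precisely what fails in the $(1,1)$ case and explains why $\grVn{1,1}$ is only homotopy equivalent to $V \otimes W$ rather than contractible. I expect the most natural organization is to apply \autoref{filthequiv} to a filtration of $\grVn{m,n}$ in which each graded piece is treated by the merging construction separately, converting the local homotopies into a global one.

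\textbf{Main obstacle.} The hard part will be the sign bookkeeping: the interaction between the Koszul rules for $\sh$ and $\sh[-1]$, the shuffle-type signs appearing in the defining formula for $\com$, and the signs in $\cod^1$ must be calibrated so that the graded commutator $[d,h] = dh + hd$ produces exactly the identity (for part (2)) or the correction $\id - \text{inc} \circ \text{pro}$ (for part (1)). A secondary issue is verifying that the homotopy $h$ is well-defined on the direct sum over all block decompositions of $(m,n)$, especially in the mixed case $m,n \geq 1$ where the merging direction is not dictated purely by $m$ or $n$; one must pick a convention (e.g., always merge at the right end) and check consistency globally.
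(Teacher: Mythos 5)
Your overall strategy (an explicit merging/desuspension homotopy) is the right one and is in the spirit of the paper's proof, but as written both halves contain steps that fail. In part (1) the support of your homotopy is reversed. Since \eqref{eq:inc11} lands in the $\cB(B,B')\otimes\cA(A,A')$ summand, the composite of \eqref{eq:pro11} followed by \eqref{eq:inc11} is the identity on that summand, sends $f\otimes g$ in the $\cA(A,A')\otimes\cB(B,B')$ summand to $(-1)^{\deg(f)\deg(g)}g\otimes f$, and kills the third summand; hence $\id$ minus this composite vanishes on the $\cB\otimes\cA$ summand and is \emph{nonzero} on the $\cA\otimes\cB$ summand. A homotopy $h$ that vanishes on the $\cA\otimes\cB$ summand cannot produce that correction: for $\m{1}$-closed $f,g$ the element $f\otimes g$ of that summand is a cycle (its two blocks are single morphisms, so $\com$ kills it), whence $(d\comp h+h\comp d)(f\otimes g)=0$ while the required right-hand side is not. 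The homotopy must instead be supported on the $\cA\otimes\cB$ summand, merging $f\otimes g\mapsto\pm\sh[-1]{(\sh{f}\otimes\sh{g})}$, exactly as in the paper; your choice is the mirror image and the claimed identity fails (the fix is easy, but the verification you sketch would not go through as stated).

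In part (2) the key claim ``$\com$ is exactly the block-splitting operation inverse to the merging $h$'' is false, because $\com$ splits every block at \emph{every} internal position, not only at your designated merge point. Already for $\grVn{3,0}$: with $f_1,f_2,f_3$ closed, the one-block element $c=\sh[-1]{(\sh{f_3}\otimes\sh{f_2}\otimes\sh{f_1})}$ has $h(c)=0$ and $d(c)=\com(c)$ equal to a signed sum of the two possible two-block splittings; applying an unrestricted adjacent-pair merging to each returns $\pm c$, so $(d\comp h+h\comp d)(c)$ is $0$ or $\pm2c$, never $c$. This is why the paper's contraction $r$ has the extra structure that your sketch omits and that is genuinely needed: it merges only when the relevant block is a \emph{single bare $\cA$-morphism} (the case $(m_t,n_t)=(1,0)$, and $r=0$ otherwise), it is a \emph{sum} over merging that morphism into each lower block $c^k$ while skipping over intervening pure-$\cB$ blocks (unavoidable in the mixed case $m,n\ge1$, where also note that merging two genuinely mixed blocks would require shuffling $\sh{\cA}$- and $\sh{\cB}$-factors, which $\com$ — splitting only into left/right segments — cannot undo), and it is defined recursively; with these constraints one proves the single identity $d\comp r+r\comp d=\id-\pro$, which delivers (1) and (2) simultaneously (assuming $m>1$ by symmetry). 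Finally, your suggestion to run \autoref{filthequiv} inside the proof of (2) does not help: the natural filtration by number of blocks has associated graded governed by $\cod^1$ alone, whose pieces are not null-homotopic; in the paper the logic goes the other way, with \autoref{null} supplying the null-homotopic graded pieces to which \autoref{filthequiv} is applied in \autoref{hiso}.
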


\begin{proof}
We start by observing that the composition of \eqref{eq:pro11} with \eqref{eq:inc11} is $\id_{\cA(A,A')\otimes\cB(B,B')}$, while the composition of \eqref{eq:inc11} with \eqref{eq:pro11} is the map
\[
\begin{split}
\pro\colon\grVn{1,1}\bigl((A,B),(A',B')\bigr) & \to\grVn{1,1}\bigl((A,B),(A',B')\bigr) \\
\bigl(f\otimes g,g'\otimes f',\sh[-1]{(\sh{f''}\otimes\sh{g''})}\bigr) & \mapsto\bigl(0,(-1)^{\deg(f)\deg(g)}g\otimes f+g'\otimes f',0\bigr)
\end{split}
\]
We define also $\pro\colon\grVn{m,n}\bigl((A,B),(A',B')\bigr)\to\grVn{m,n}\bigl((A,B),(A',B')\bigr)$ to be $0$ for $m>1$, and to be the identity for $m=1$ and $n=0$.

Therefore, in order to prove both (1) and (2) (where, by symmetry, we can assume $m>1$), we just need to find, when $m>1$ or $m=n=1$, a $\K$-linear map
\[
r\colon\grVn{m,n}\bigl((A,B),(A',B')\bigr)\to\grVn{m,n}\bigl((A,B),(A',B')\bigr)
\]
of degree $-1$ such that
\[
d\comp r+r\comp d=\id-\pro.
\]
More generally, we define $r$ for $m>0$ as follows. By linearity an element of $\grVn{m,n}\bigl((A,B),(A',B')\bigr)$ can be assumed to be of the form
\[
c=c^l\otimes\cdots\otimes c^1\in C_{(m_1,\dots,m_l),(n_1,\dots,n_l)}\bigl((A,B),(A',B')\bigr),
\]
where $m_1+\cdots+m_l=m$, $n_1+\cdots+n_l=n$ and $c^k\in C_{(m_k),(n_k)}\bigl((A_{k-1},B_{k-1}),(A_k,B_k)\bigr)$ homogeneous (for $k=1,\dots,l$), with $A_0=A$, $A_l=A'$, $B_0=B$ and $B_l=B'$. Given $1\le i\le j\le l$, we will often use the shorthand $c^{[j,i]}:=c^j\otimes\cdots\otimes c^i$, as well as its variants $c^{[j,i)}$, $c^{(j,i]}$ and $c^{(j,i)}$ (with obvious meanings). Setting
\begin{gather*}
t:=\max\bigl\{k\in\{1,\dots,l\}\st m_k>0\bigr\} \\
s':=\min\bigl\{k\in\{1,\dots,t\}\st m_i=0\text{ for $k<i<t$}\bigr\} \\
s:=
\begin{cases}
s' & \text{if $(m_t,n_t)=(1,0)$} \\
t & \text{otherwise}
\end{cases}
\end{gather*}
(note that they are well defined because $m>0$), we define recursively
\[
r(c):=\sum_{k=s}^{t-1}(-1)^{\deg(c^{[l,t)})+\degp(c^t)\deg(c^{(t,k)})}c^{[l,t)}\otimes c^{(t,k)}\otimes r(c^t\otimes c^k)\otimes c^{(k,1]}
\]
(so $r(c)=0$ if $s=t$, in particular if $(m_t,n_t)\ne(1,0)$). If $(m_t,n_t)=(1,0)$, $c^t=f\in\cA(A_{t-1},A_t)$ and $c^k=\sh[-1]{(\sh{f_{m_k}}\otimes\cdots\otimes\sh{f_1}\otimes\sh{g_{n_k}}\otimes\cdots\otimes\sh{g_1})}$ (with $s\le k<t$), then
\[
r(c^t\otimes c^k):=(-1)^{\deg(f)}\sh[-1]{(\sh{f}\otimes\sh{f_{m_k}}\otimes\cdots\otimes\sh{f_1}\otimes\sh{g_{n_k}}\otimes\cdots\otimes\sh{g_1})}.
\]
For the rest of the proof we assume $m>1$ or $m=n=1$. First we note that $\pro(c)=c^{[l,t)}\otimes\pro(c^{[t,1]})$. Indeed, we can assume $t<l$, and then $\pro(c)=\pro(c^{[t,1]})=0$ if $m>1$, whereas $\pro(c)=c$ and $\pro(c^{[t,1]})=c^{[t,1]}$ if $m=n=1$ (in which case $l=2$, $(m_2,n_2)=(0,1)$ and $(m_1,n_1)=(1,0)$). Since moreover
\begin{multline*}
(d\comp r+r\comp d)(c)=d\bigl((-1)^{\deg(c^{[l,t)})}c^{[l,t)}\otimes r(c^{[t,1]})\bigr)+r\bigl(d(c^{[l,t)})\otimes c^{[t,1]}+(-1)^{\deg(c^{[l,t)})}c^{[l,t)}\otimes d(c^{[t,1]})\bigr) \\
=(-1)^{\deg(c^{[l,t)})}d(c^{[l,t)})\otimes r(c^{[t,1]})+c^{[l,t)}\otimes d\bigl(r(c^{[t,1]})\bigr)+(-1)^{\degp(c^{[l,t)})}d(c^{[l,t)})\otimes r(c^{[t,1]})+c^{[l,t)}\otimes r\bigl(d(c^{[t,1]})\bigr) \\
=c^{[l,t)}\otimes(d\comp r+r\comp d)(c^{[t,1]}),
\end{multline*}
it is enough to prove that
\begin{equation}\label{eq:null}
(d\comp r+r\comp d)(c^{[t,1]})=c^{[t,1]}-\pro(c^{[t,1]}).
\end{equation}
We have
\begin{multline*}
d\bigl(r(c^{[t,1]})\bigr)=d\bigl(\sum_{k=s}^{t-1}(-1)^{\degp(c^t)\deg(c^{(t,k)})}c^{(t,k)}\otimes r(c^t\otimes c^k)\otimes c^{(k,1]}\bigr) \\
=\sum_{k=s}^{t-1}(-1)^{\degp(c^t)\deg(c^{(t,k)})+\deg(c^{(t,k)})}c^{(t,k)}\otimes d\bigl(r(c^t\otimes c^k)\bigr)\otimes c^{(k,1]} \\
+\sum_{k=s}^{t-1}\sum_{i=1}^{k-1}(-1)^{\degp(c^t)\deg(c^{(t,k)})+\degp(c^{[t,i)})}c^{(t,k)}\otimes r(c^t\otimes c^k)\otimes c^{(k,i)}\otimes d(c^i)\otimes c^{(i,1]} \\
+\sum_{k=s}^{t-1}\sum_{i=k+1}^{t-1}(-1)^{\degp(c^t)\deg(c^{(t,k)})+\deg(c^{(t,i)})}c^{(t,i)}\otimes d(c^i)\otimes c^{(i,k)}\otimes r(c^t\otimes c^k)\otimes c^{(k,1]}
\end{multline*}
and
\begin{multline*}
r\bigl(d(c^{[t,1]})\bigr)=r\bigl(\sum_{i=1}^t(-1)^{\deg(c^{[t,i)})}c^{[t,i)}\otimes d(c^i)\otimes c^{(i,1]}\bigr) \\
=r\bigl(d(c^t)\otimes c^{(t,1]}\bigr)+\sum_{i=s}^{t-1}(-1)^{\deg(c^{[t,i)})+\degp(c^t)\deg(c^{(t,i)})}c^{(t,i)}\otimes r\bigl(c^t\otimes d(c^i)\bigr)\otimes c^{(i,1]} \\
+\sum_{i=1}^{t-1}\sum_{k=\max\{i+1,s\}}^{t-1}(-1)^{\deg(c^{[t,i)})+\degp(c^t)\deg(c^{(t,k)})}c^{(t,k)}\otimes r(c^t\otimes c^k)\otimes c^{(k,i)}\otimes d(c^i)\otimes c^{(i,1]} \\
+\sum_{i=1}^{t-1}\sum_{k=s}^{i-1}(-1)^{\deg(c^{[t,i)})+\degp(c^t)\degp(c^{(t,k)})}c^{(t,i)}\otimes d(c^i)\otimes c^{(i,k)}\otimes r(c^t\otimes c^k)\otimes c^{(k,1]},
\end{multline*}
whence
\begin{multline}\label{eq:drrd}
(d\comp r+r\comp d)(c^{[t,1]}) \\
=r\bigl(d(c^t)\otimes c^{(t,1]}\bigr)+\sum_{k=s}^{t-1}(-1)^{\deg(c^t)\deg(c^{(t,k)})}c^{(t,k)}\otimes\Bigl(d\bigl(r(c^t\otimes c^k)\bigr)+(-1)^{\deg(c^t)}r\bigl(c^t\otimes d(c^k)\bigr)\Bigr)\otimes c^{(k,1]}.
\end{multline}
First we assume $(m_t,n_t)\ne(1,0)$, in which case the right-hand-side of \eqref{eq:drrd} is just $r\bigl(\com(c^t)\otimes c^{(t,1]}\bigr)$. If $m_t>1$ then
\begin{multline*}
(d\comp r+r\comp d)(c^{[t,1]})=r\bigl((-1)^{\deg(c^t_{m_t,\emptyset})}c^t_{m_t,\emptyset}\otimes c^t_{<m_t,*}\otimes c^{(t,1]}\bigr) \\
=(-1)^{\deg(c^t_{m_t,\emptyset})}r(c^t_{m_t,\emptyset}\otimes c^t_{<m_t,*})\otimes c^{(t,1]}=c^t\otimes c^{(t,1]}=c^{[t,1]},
\end{multline*}
hence \eqref{eq:null} holds in this case. If $m_t=1$ and $n_t>0$ then
\begin{multline*}
(d\comp r+r\comp d)(c^{[t,1]})=r\Bigl(\bigl((-1)^{\deg(c^t_{1,\emptyset})}c^t_{1,\emptyset}\otimes c^t_{\emptyset,*}-(-1)^{\deg(c^t_{1,\emptyset})\degp(c^t_{\emptyset,*})}c^t_{\emptyset,*}\otimes c^t_{1,\emptyset}\bigr)\otimes c^{(t,1]}\Bigr) \\
=(-1)^{\deg(c^t_{1,\emptyset})}r(c^t_{1,\emptyset}\otimes c^t_{\emptyset,*})\otimes c^{(t,1]} \\
+(-1)^{\deg(c^t_{1,\emptyset})}\sum_{k=s'}^{t-1}(-1)^{\degp(c^t_{1,\emptyset})\bigl(\deg(c^t_{\emptyset,*})+\deg(c^{(t,k)})\bigr)}c^t_{\emptyset,*}\otimes c^{(t,k)}\otimes r(c^t_{1,\emptyset}\otimes c^k)\otimes c^{(k,1]} \\
-(-1)^{\deg(c^t_{1,\emptyset})\degp(c^t_{\emptyset,*})}\sum_{k=s'}^{t-1}(-1)^{\deg(c^t_{\emptyset,*})+\degp(c^t_{1,\emptyset})\deg(c^{(t,k)})}c^t_{\emptyset,*}\otimes c^{(t,k)}\otimes r(c^t_{1,\emptyset}\otimes c^k)\otimes c^{(k,1]}=c^{[t,1]},
\end{multline*}
thus proving \eqref{eq:null} also in this case.

Finally we assume $(m_t,n_t)=(1,0)$. Then we have
\[
r\bigl(d(c^t)\otimes c^{(t,1]}\bigr)=\sum_{k=s}^{t-1}(-1)^{\deg(c^t)\deg(c^{(t,k)})}c^{(t,k)}\otimes r\bigl(\m[\cA]{1}(c^t)\otimes c^k\bigr)\otimes c^{(k,1]},
\]
and so from \eqref{eq:drrd} we obtain
\begin{multline}\label{eq:drrd1}
(d\comp r+r\comp d)(c^{[t,1]}) \\
=\sum_{k=s}^{t-1}(-1)^{\deg(c^t)\deg(c^{(t,k)})}c^{(t,k)}\otimes\Bigl(r\bigl(\m[\cA]{1}(c^t)\otimes c^k\bigr)+d\bigl(r(c^t\otimes c^k)\bigr)+(-1)^{\deg(c^t)}r\bigl(c^t\otimes d(c^k)\bigr)\Bigr)\otimes c^{(k,1]}.
\end{multline}
Since
\begin{multline*}
\cod^1\bigl(r(c^t\otimes c^k)\bigr)=\sum_{i=1}^{m_k+1}(-1)^{\degp\bigl(r(c^t\otimes c^k)_{>i,\emptyset}\bigr)}\cod^1_{i,0}\bigl(r(c^t\otimes c^k)\bigr)+\sum_{j=1}^{n_k}(-1)^{\degp\bigl(r(c^t\otimes c^k)_{*,>j}\bigr)}\cod^1_{0,j}\bigl(r(c^t\otimes c^k)\bigr) \\
=\sum_{i=1}^{m_k}(-1)^{\deg(c^t)+\deg(c^k_{>i,\emptyset})}r\bigl(c^t\otimes\cod^1_{i,0}(c^k)\bigr)-r\bigl(\m[\cA]{1}(c^t)\otimes c^k\bigr)+\sum_{j=1}^{n_k}(-1)^{\deg(c^t)+\deg(c^k_{*,>j})}r\bigl(c^t\otimes\cod^1_{0,j}(c^k)\bigr) \\
=-r\bigl(\m[\cA]{1}(c^t)\otimes c^k\bigr)-(-1)^{\deg(c^t)}r\Bigl(c^t\otimes\bigl(\sum_{i=1}^{m_k}(-1)^{\degp(c^k_{>i,\emptyset})}\cod^1_{i,0}(c^k)+\sum_{j=1}^{n_k}(-1)^{\degp(c^k_{*,>j})}\cod^1_{0,j}(c^k)\bigr)\Bigr) \\
=-r\bigl(\m[\cA]{1}(c^t)\otimes c^k\bigr)-(-1)^{\deg(c^t)}r\bigl(c^t\otimes\cod^1(c^k)\bigr)
\end{multline*}
and
\begin{multline*}
\com\bigl(r(c^t\otimes c^k)\bigr) \\
=\sum_{(i,j)\in I_{m_k+1,n_k}}(-1)^{\deg\bigl(r(c^t\otimes c^k)_{\le i,\emptyset}\bigr)\degp\bigl(r(c^t\otimes c^k)_{\emptyset,>j}\bigr)+\deg\bigl(r(c^t\otimes c^k)_{>i,\emptyset}\bigr)}r(c^t\otimes c^k)_{>i,>j}\otimes r(c^t\otimes c^k)_{\le i,\le j} \\
=\sum_{(i,j)\in I_{m_k,n_k}}(-1)^{\deg(c^k_{\le i,\emptyset})\degp(c^k_{\emptyset,>j})+\deg(c^k_{>i,\emptyset})+\degp(c^t)}r(c^t\otimes c^k_{>i,>j})\otimes c^k_{\le i,\le j}+c^t\otimes c^k \\
-(-1)^{\deg(c^t)\deg(c^k)}(c^k\otimes c^t)^{\delta_{m_k,0}}+\sum_{j=\delta_{m_k,0}}^{n_k-1}(-1)^{\bigl(\deg(c^k_{*,\emptyset})+\degp(c^t)\bigr)\degp(c^k_{\emptyset,>j})+1}c^k_{\emptyset,>j}\otimes r(c^t\otimes c^k_{*,\le j}) \\
=c^t\otimes c^k-(-1)^{\deg(c^t)\deg(c^k)}(c^k\otimes c^t)^{\delta_{m_k,0}} \\
-(-1)^{\deg(c^t)}r\bigl(c^t\otimes\sum_{(i,j)\in I_{m_k,n_k}}(-1)^{\deg(c^k_{\le i,\emptyset})\degp(c^k_{\emptyset,>j})+\deg(c^k_{>i,\emptyset})}c^k_{>i,>j}\otimes c^k_{\le i,\le j}\bigr) \\
=c^t\otimes c^k-(-1)^{\deg(c^t)\deg(c^k)}(c^k\otimes c^t)^{\delta_{m_k,0}}-(-1)^{\deg(c^t)}r\bigl(c^t\otimes\com(c^k)\bigr),
\end{multline*}
we see that
\[
r\bigl(\m[\cA]{1}(c^t)\otimes c^k\bigr)+d\bigl(r(c^t\otimes c^k)\bigr)+(-1)^{\deg(c^t)}r\bigl(c^t\otimes d(c^k)\bigr)=c^t\otimes c^k-(-1)^{\deg(c^t)\deg(c^k)}(c^k\otimes c^t)^{\delta_{m_k,0}}.
\]
Substituting the last equality in \eqref{eq:drrd1} and remembering that $m_k=0$ for $s<k<t$, while $m_s=0$ if and only if $m=1$ (in which case $s=1$), we get
\[
(d\comp r+r\comp d)(c^{[t,1]})=
\begin{cases}
c^{[t,1]} & \text{if $m>1$} \\
c^{[t,1]}-(-1)^{\deg(c^t)\deg(c^{(t,1]})}c^{(t,1]}\otimes c^t & \text{if $m=1$},
\end{cases}
\]
from which we conclude that \eqref{eq:null} is satisfied also in this case.
\end{proof}

We can finally prove the main result of this section. Note that, when $\cB$ is the $0$ dg algebra, \eqref{eq:AB} boils down to \eqref{eq:mapimp}. Hence the first part of the following result shows, in particular, that \eqref{eq:mapimp} is a homotopy equivalence, as wanted.

\begin{prop}\label{hiso}
For every $A,A'\in\cA$ and $B,B'\in\cB$ the map \eqref{eq:AB} is a homotopy equivalence. Moreover, a morphism of complexes
\[
\coB(\cC)\bigl((A,B),(A',B')\bigr)\to\red{\aug{\cA}\otimes\aug{\cB}}\bigl((A,B),(A',B')\bigr)
\]
is a homotopy equivalence if and only if its restriction to $\bigoplus_{0\le m,n\le1}\grVn{m,n}\bigl((A,B),(A',B')\bigr)$ is a homotopy equivalence.
\end{prop}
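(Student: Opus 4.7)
The plan is to exhibit the map \eqref{eq:AB} as the composition of two homotopy equivalences. Write $D:=\coB(\cC)\bigl((A,B),(A',B')\bigr)$ and
\[
S:=\grVn{1,0}\bigl((A,B),(A',B')\bigr)\oplus\grVn{0,1}\bigl((A,B),(A',B')\bigr)\oplus\grVn{1,1}\bigl((A,B),(A',B')\bigr).
\]
The first homotopy equivalence will identify $\red{\aug{\cA}\otimes\aug{\cB}}\bigl((A,B),(A',B')\bigr)$ with $S$ using \autoref{null}(1), and the second will be the inclusion $\iota\colon S\mono D$, shown to be a homotopy equivalence via a filtration argument together with \autoref{null}(2) and \autoref{filthequiv}.

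The first step is to check that $S$ is a subcomplex of $D$. Since $\cod^1$ and $\com$ preserve each summand $\grVn{m,n}$, while $\cod^i$ with $i>1$ sends $\grVn{m,n}$ into $\bigoplus_{0<m'<m}\grVn{m',n}\oplus\bigoplus_{0<n'<n}\grVn{m,n'}$, no component of the differential can leave $S$ when $m\le1$ and $n\le1$ (the relevant target sums are empty).

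Next, I would define an ascending exhaustive filtration of $D$ by $\filt{0}D:=0$, $\filt{1}D:=S$, and
\[
\filt{k}D:=S\oplus\bigoplus_{\substack{2\le m+n\le k\\(m,n)\ne(1,1)}}\grVn{m,n}\bigl((A,B),(A',B')\bigr)\qquad(k\ge2).
\]
Because $\cod^i$ with $i>1$ strictly decreases $m+n$ while preserving positivity of the other coordinate, each $\filt{k}D$ is a subcomplex of $D$, and in the associated graded $\gr{k}D$ for $k\ge2$ only $\cod^1+\com$ survives. Thus $\gr{k}D$ is the direct sum of the complexes $\grVn{m,n}\bigl((A,B),(A',B')\bigr)$ with $m+n=k$ and $(m,n)\ne(1,1)$, each equipped with its natural differential $\cod^1+\com$. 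Since every such summand has $m>1$ or $n>1$, \autoref{null}(2) guarantees it is null-homotopic, hence so is $\gr{k}D$. Because $D=\bigoplus_{m,n}\grVn{m,n}$ as a graded module, the short exact sequences $0\to\filt{k-1}D\to\filt{k}D\to\gr{k}D\to0$ automatically split as sequences of graded modules. \autoref{filthequiv} then yields that $\iota$ is a homotopy equivalence of complexes.

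Finally, $\grVn{1,0}$ and $\grVn{0,1}$ coincide with $\cA(A,A')^{\delta_{B,B'}}$ and $\cB(B,B')^{\delta_{A,A'}}$ as complexes by \eqref{eq:Hom10} and \eqref{eq:Hom01}, whereas \eqref{eq:inc11} is a homotopy equivalence $\cA(A,A')\otimes\cB(B,B')\to\grVn{1,1}\bigl((A,B),(A',B')\bigr)$ by \autoref{null}(1). Taking the direct sum, the map \eqref{eq:AB} factors as a homotopy equivalence $\red{\aug{\cA}\otimes\aug{\cB}}\bigl((A,B),(A',B')\bigr)\isomor S$ followed by $\iota$, and is therefore itself a homotopy equivalence. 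For the ``moreover'' part, any morphism of complexes $\varphi\colon D\to\red{\aug{\cA}\otimes\aug{\cB}}\bigl((A,B),(A',B')\bigr)$ restricts on $S$ to $\varphi\comp\iota$, so since $\iota$ is a homotopy equivalence, $\varphi$ is one if and only if $\varphi\comp\iota$ is. The main technical difficulty has already been absorbed into \autoref{null}, whose explicit null-homotopies make the filtration argument go through cleanly; the only subtle point left is verifying that the filtration is a filtration of subcomplexes, which rests precisely on the fact that higher $\cod^i$ strictly decrease $m+n$.
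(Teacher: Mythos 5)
Your proof is correct and follows essentially the same route as the paper: factor \eqref{eq:AB} through $\bigoplus_{0\le m,n\le1}\grVn{m,n}\bigl((A,B),(A',B')\bigr)$, use \autoref{null}(1) for the first factor and \autoref{null}(2) together with \autoref{filthequiv} for the inclusion, the structural facts about $\cod^i$ and $\com$ guaranteeing the filtration is by subcomplexes. The only (harmless) difference is bookkeeping: you run \autoref{filthequiv} once on a single filtration of the whole complex by total weight $m+n$ (with $\filt{1}$ adjusted to include $\grVn{1,1}$), whereas the paper first splits off the three subcomplexes $\grVn{*,0}$, $\grVn{0,*}$, $\grVn{>0}$ and filters each separately.
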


\begin{proof}
By construction \eqref{eq:AB} is the composition of a map
\begin{equation}\label{eq:AB1}
\red{\aug{\cA}\otimes\aug{\cB}}\bigl((A,B),(A',B')\bigr)\to\bigoplus_{0\le m,n\le1}\grVn{m,n}\bigl((A,B),(A',B')\bigr)
\end{equation}
with the inclusion
\begin{equation}\label{eq:inc}
\bigoplus_{0\le m,n\le1}\grVn{m,n}\bigl((A,B),(A',B')\bigr)\mono\coB(\cC)\bigl((A,B),(A',B')\bigr).
\end{equation}
Now, part (1) of \autoref{null} immediately implies that \eqref{eq:AB1} is a homotopy equivalence. Therefore it is enough to prove that \eqref{eq:inc} is a homotopy equivalence, as well. Clearly this is true if (and only if) each of the three inclusions
\begin{gather*}
\grVn{1,0}\bigl((A,B),(A',B')\bigr)\mono\grVn{*,0}\bigl((A,B),(A',B')\bigr) \\
\grVn{0,1}\bigl((A,B),(A',B')\bigr)\mono\grVn{0,*}\bigl((A,B),(A',B')\bigr) \\
\grVn{1,1}\bigl((A,B),(A',B')\bigr)\mono\grVn{>0}\bigl((A,B),(A',B')\bigr)
\end{gather*}
is a homotopy equivalence. To this aim we apply \autoref{filthequiv} to the complexes on the right-hand-sides of the above inclusions, endowed with the filtrations
\begin{gather*}
\filt{n}\grVn{*,0}\bigl((A,B),(A',B')\bigr):=\bigoplus_{0\le m\le n}\grVn{m,0}\bigl((A,B),(A',B')\bigr) \\
\filt{n}\grVn{0,*}\bigl((A,B),(A',B')\bigr):=\bigoplus_{0\le m\le n}\grVn{0,m}\bigl((A,B),(A',B')\bigr) \\
\filt{n}\grVn{>0}\bigl((A,B),(A',B')\bigr):=\bigoplus_{m,m'>0,\,m+m'\le n+1}\grVn{m,m'}\bigl((A,B),(A',B')\bigr)
\end{gather*}
Note that the assumptions of \autoref{filthequiv} are satisfied because
\begin{gather*}
\gr{n}\grVn{*,0}\bigl((A,B),(A',B')\bigr)=\grVn{n,0}\bigl((A,B),(A',B')\bigr) \\
\gr{n}\grVn{0,*}\bigl((A,B),(A',B')\bigr)=\grVn{0,n}\bigl((A,B),(A',B')\bigr) \\
\gr{n}\grVn{>0}\bigl((A,B),(A',B')\bigr)=\bigoplus_{0<m\le n}\grVn{m,n+1-m}\bigl((A,B),(A',B')\bigr)
\end{gather*}
are null-homotopic for $n>1$ by part (2) of \autoref{null}.
\end{proof}

Let us now single out the following direct consequence.

\begin{cor}\label{hiso2}
Assume that $\cA$ and $\cB$ are non-unital dg categories. Then there is a natural non-unital dg functor $\dgmf{\fN}\colon\coB(\cC)\to\red{\aug{\cA}\otimes\aug{\cB}}$, which is a \htpiso.
\end{cor}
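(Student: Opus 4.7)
The plan is to exploit the adjunction $\coB\dashv\B$ of \autoref{dgAnadj}: to construct a dg functor $\dgmf{\fN}\colon\coB(\cC)\to\red{\aug{\cA}\otimes\aug{\cB}}$ it suffices to produce a dg cofunctor $\cC\to\B(\cD)$, where for brevity $\cD:=\red{\aug{\cA}\otimes\aug{\cB}}$. Since $\B(\cD)=\Tcr{\sh{\cD}}$ is the cofree cocategory on the graded quiver $\sh{\cD}$, this in turn reduces to giving a degree-$0$ morphism of graded quivers $\tau\colon\cC\to\sh{\cD}$ whose induced graded cofunctor commutes with differentials---a Maurer--Cartan-type equation on $\tau$.

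I would define $\tau$ to be supported only on the components $C_{m,n}$ of $\cC$ with $(m,n)\in\{(1,0),(0,1),(1,1)\}$, setting it to zero elsewhere. On $C_{1,0}$ (so $B=B'$) send $\sh{f}\mapsto\sh{(f\otimes 1_B)}\in\sh{\cD}((A,B),(A',B))$; on $C_{0,1}$ (so $A=A'$) send $\sh{g}\mapsto\sh{(1_A\otimes g)}$; and on $C_{1,1}$ send $\sh{f}\otimes\sh{g}\mapsto\pm\sh{(f\otimes g)}$ for a sign to be determined. The verification that $\tau$ satisfies the Maurer--Cartan equation is a finite case analysis by bi-degree: the case $(1,1)$ pins down the sign in the definition of $\tau$ on $C_{1,1}$; the cases $(2,0)$ and $(0,2)$ reduce to the graded Leibniz rule for $\m[\cA]{1}$ and $\m[\cB]{1}$ with respect to $\m{2}$; the cases $(2,1),(1,2),(2,2)$ reduce to the associativity of $\m[\cA]{2},\m[\cB]{2}$ combined with the Koszul sign rule for composition in $\aug{\cA}\otimes\aug{\cB}$; and all higher bi-degrees produce the trivial equation because every surviving term involves $\tau$ applied to some piece of bi-degree outside $\{(1,0),(0,1),(1,1)\}$. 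This works precisely because $\cA,\cB$ are dg, so $\m[\cA]{i}=\m[\cB]{i}=0$ and hence $\cod^i=0$ for $i>2$, keeping the Maurer--Cartan equation finite in each bi-degree. Naturality of the resulting $\dgmf{\fN}$ in $\cA,\cB$ is immediate from the explicit formulas for $\tau$.

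It then remains to show that $\dgmf{\fN}$ is a \htpiso. Since $\dgmf{\fN}^0=\id$ on objects, only the homotopy equivalence of $\dgmf{\fN}^1$ on each Hom complex has to be checked. By construction, $\dgmf{\fN}^1$ is the natural inclusion on $\grVn{1,0}$ and $\grVn{0,1}$, agrees up to sign with the projection \eqref{eq:pro11} on the first two summands of $\grVn{1,1}$, and vanishes on $\sh[-1]{(\sh{\cA}\otimes\sh{\cB})}$ and on every $\grVn{m,n}$ with $m+n\ge3$. Direct inspection then shows that the composition of $\dgmf{\fN}^1$ with the map \eqref{eq:AB} is the identity of $\red{\aug{\cA}\otimes\aug{\cB}}((A,B),(A',B'))$, and since \eqref{eq:AB} is a homotopy equivalence by \autoref{hiso}, it follows that $\dgmf{\fN}^1$ is one too. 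The principal obstacle throughout is the sign bookkeeping in the Maurer--Cartan verification of the second paragraph, though the computations closely mirror those carried out in the proof of \autoref{null}.
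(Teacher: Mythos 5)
Your overall strategy (build $\dgmf{\fN}$ from a twisting-cochain datum $\tau\colon\cC\to\sh{\cD}$ via the adjunction, then deduce the homotopy equivalence from \autoref{hiso}) is reasonable and the second half of your argument is sound: since \eqref{eq:AB} is a homotopy equivalence and $\dgmf{\fN}^1\comp\eqref{eq:AB}=\id$, it follows that $\dgmf{\fN}^1$ is a homotopy equivalence, which is essentially how the paper argues (it invokes the second part of \autoref{hiso} together with part (1) of \autoref{null}, after recalling that $\dgmf{\fN}$ is the functor constructed in \cite[Section 3.1]{COS}, whose restriction to $\bigoplus_{0\le m,n\le1}\grVn{m,n}$ is given by \eqref{eq:Hom10}, \eqref{eq:Hom01} and \eqref{eq:pro11}).

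The gap is in the construction itself: the component of $\tau$ you put on $C_{1,1}$, namely $\sh{f}\otimes\sh{g}\mapsto\pm\sh{(f\otimes g)}$, is incompatible with the Maurer--Cartan equation, for any choice of sign. The correct cochain (the one underlying the functor of \cite{COS}, as reflected by \eqref{eq:pro11}, which kills the summand $\sh[-1]{(\sh{\cA}\otimes\sh{\cB})}$) is supported only on $C_{1,0}\oplus C_{0,1}$; with that choice the bidegree $(1,1)$ equation holds because the two cross terms $(f\otimes 1)\comp(1\otimes g)$ and $\pm(1\otimes g)\comp(f\otimes 1)$ cancel (this is exactly the statement that \eqref{eq:pro11} is a chain map), and all higher bidegrees are identically zero since $\tau$ then kills every term produced by $\cod$ and $\com$. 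Once you add a nonzero $(1,1)$-component $\theta$, the equation in bidegree $(2,1)$ (and symmetrically $(1,2)$) acquires three new terms on $c=\sh{f_2}\otimes\sh{f_1}\otimes\sh{g}$: the linear term $\theta(\cod^2(c))$ and the two quadratic cross terms coming from the splittings of $\com(c)$ in $C_{1,0}\otimes C_{1,1}$ and $C_{1,1}\otimes C_{1,0}$. Each is a unit multiple of $(\m[\cA]{2}(f_2\otimes f_1))\otimes g$, and an odd number of $\pm1$ coefficients cannot sum to zero, so no sign bookkeeping makes the case analysis close; your claim that bidegrees $(2,1),(1,2),(2,2)$ "reduce to associativity" fails at this point. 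Note also that your third paragraph already assumes $\dgmf{\fN}^1$ vanishes on $\sh[-1]{(\sh{\cA}\otimes\sh{\cB})}$, contradicting your own definition of $\tau$ on $C_{1,1}$: deleting that component both removes the contradiction and repairs the Maurer--Cartan verification. Finally, the statement that $\dgmf{\fN}^1$ vanishes on every $\grVn{m,n}$ with $m+n\ge3$ is false as written: $\coB(\cC)$ is freely generated, and the dg functor is nonzero on composite elements (e.g.\ it sends a product of generators from $C_{1,0}$ and $C_{1,1}$ in $\grVn{2,1}$ to $\pm(f_2\comp f_1)\otimes g$); only its values on the generators $\sh[-1]{C_{m,n}}$ with $(m,n)\notin\{(1,0),(0,1)\}$ vanish. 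This last slip does not affect your concluding argument, which only uses the restriction to the image of \eqref{eq:AB}, but the Maurer--Cartan failure does, so as written the functor you construct does not exist.
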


\begin{proof}
The definition of $\dgmf{\fN}$ can be found in \cite[Section 3.1]{COS}, where it is also proved that it is a quasi-isomorphism (hence a \htpiso) when $\K$ is a field. Over an arbitrary commutative ring we can apply the second part of \autoref{hiso}. Indeed, it can be readily checked that, for every $A,A'\in\cA$ and $B,B'\in\cB$, the restriction of
\[
\dgmf{\fN}\colon\coB(\cC)\bigl((A,B),(A',B')\bigr)\to\red{\aug{\cA}\otimes\aug{\cB}}\bigl((A,B),(A',B')\bigr)
\]
to $\bigoplus_{0\le m,n\le1}\grVn{m,n}\bigl((A,B),(A',B')\bigr)$ is given by the natural maps \eqref{eq:Hom10}, \eqref{eq:Hom01} and \eqref{eq:pro11}. Such restriction is then a homotopy equivalence by part (1) of \autoref{null}.
\end{proof}

\section{Proof of \autoref{infequiv}}\label{sec:infequiv}

We refer to \cite{Tan} and \cite{Pas} for the (few) basic notions about $\infty$-categories which are needed in this section.

Recall that a \emph{category with weak equivalences} is a pair $(\cC,\W)$ (often denoted simply by $\cC$), where $\cC$ is an ordinary category and $\W$ is a class of morphisms of $\cC$ containing the isomorphisms and satisfying the 2-out-of-3 property. We can then consider both the localization $\Ho\cC$ of $\cC$ with respect to $\W$, which is an ordinary category,\footnote{In general it can be necessary to enlarge the universe of definition in order to get a category, because the morphisms in $\Ho\cC$ between two objects need not be a set in the original universe. However a posteriori this does not happen in the cases we are interested in, namely when $\cC$ is one of the four categories in \eqref{eq:incl} and $\W$ is the corresponding class of quasi-equivalences. Indeed, this follows from \autoref{1equiv} and from the fact that $\Hqe$ can be alternatively described as the homotopy category of $\dgCat$ endowed with Tabuada's model structure \cite{Ta}.} and the $\infty$-category $\Inf\cC$ obtained as the localization of the nerve of $\cC$ with respect to the image under the nerve functor of $\W$.

Given another category with weak equivalences $\cD$, a functor $\fF\colon\cC\to\cD$ preserving weak equivalences induces a functor $\Ho\fF\colon\Ho\cC\to\Ho\cD$ and an $\infty$-functor $\Inf\fF\colon\Inf\cC\to\Inf\cD$. Now, let $\fF'\colon\cC\to\cD$ be another functor such that there exists a natural transformation $\fF\to\fF'$ or $\fF'\to\fF$ whose components are all weak equivalences. Then it is easy to see (using the 2-out-of-3 property) that also $\fF'$ preserves weak equivalences and that $\Ho\fF$ is isomorphic to $\Ho{\fF'}$. Moreover, by \cite[Proposition 2.3]{Tan}, $\Inf\fF$ is homotopic to $\Inf{\fF'}$. Clearly this implies the following result.

\begin{prop}\label{Tan}
Let $\cC$ and $\cD$ be categories with weak equivalences and let $\fF\colon\cC\to\cD$ and $\fG\colon\cD\to\cC$ be functors preserving weak equivalences. Assume moreover that $\id_{\cC}$ (respectively $\id_{\cD}$) and $\fG\comp\fF$ (respectively $\fF\comp\fG$) are related by a zigzag of natural transformations whose components are all weak equivalences. Then $\Ho\fF$ and $\Ho\fG$ are quasi-inverse equivalences of categories and $\Inf\fF$ and $\Inf\fG$ are quasi-inverse equivalences of $\infty$-categories.
\end{prop}

\begin{remark}\label{Pas}
The assumptions of \autoref{Tan} are certainly satisfied when $\fF$ and $\fG$ form a \emph{Dwyer-Kan adjunction}, meaning that they are adjoint functors which preserve weak equivalences and such that the components of both the unit and the counit of the adjunction are all weak equivalences. The fact that in this case $\Inf\fF$ and $\Inf\fG$ are quasi-inverse equivalences of $\infty$-categories was already observed in \cite{Pas}.
\end{remark}

This discussion is crucial for our proof of \autoref{infequiv} which is divided into several sections each of which compares the various $\infty$-categorical localizations mentioned in the statement.

\begin{remark}\label{rmk:algebras}
The reader should approach the next few sections by paying attention to the fact that all functors involved in our constructions induce bijections of objects. Thus our argument provides a proof of the variant of \autoref{infequiv} where categories are replaced by algebras (namely, categories with just one object). Clearly this will prove all the equivalences in \autoref{infalg}, except the last one, which can however be very easily deduced from \autoref{dgAnadj} (see also \autoref{adjunit}).
\end{remark}

\subsection{Dg vs.\  strictly unital $A_\infty$ categories}\label{subsec:Pas}

In this subsection we prove the equivalence between $\InfdgCat$ and $\InfACat$. As a preliminary step, let us consider the following.

\begin{definition}\label{def:splitunit}
A strictly unital $A_\infty$ category $A$ \emph{has split units} if $\K\iso\K\id_A$ and the inclusion $\K\id_A\mono\cA(A,A)^0$ of $\K$-modules splits, for every $A\in\cA$.
\end{definition}

In view of \autoref{Pas}, the result we want to prove is a straightforward consequence of the following one, which is the strictly unital analogue of \autoref{dgAnadj} in the non-unital setting.

\begin{prop}\label{dgAadj}
There is an adjunction
\[
\adjpair{\VdB}{\ACat}{\dgCat}{\fdgA},
\]
where $\fdgA$ is the inclusion functor. Moreover, the unit $\ntV\colon\id_{\ACat}\to\fdgA\comp\VdB$ and the counit $\nfV\colon\VdB\comp\fdgA\to\id_{\dgCat}$ are such that $\ntV[\cA]$ (for every $\cA\in\ACat$) and $\nfV[\cB]$ (for every $\cB\in\dgCat$) are quasi-isomorphisms. Finally, $\ntV[\cA]$ is even a {\htpiso} if $\cA$ has split units.
\end{prop}

Before turning to the proof of \autoref{dgAadj}, a few observations are in order.

\begin{remark}\label{adjunit}
In a completely similar way, replacing \autoref{dgAadj} with \autoref{dgAnadj} and recalling \autoref{dgAuadj}, the adjunction between $\VdBn$ and $\fdgAn$ induces a natural equivalence of $\infty$-categories between $\InfdgCatu$ (respectively $\InfdgCatc$) and $\InfACatu$ (respectively $\InfACatc$).
\end{remark}

\begin{remark}\label{varInfCat}
When $\K$ is a field, the equivalence between $\InfdgCat$ and $\InfACat$ was already proved in \cite[Corollary 5.2]{Pas} (using the results of \cite{COS}). As it is pointed out in \cite{Pas}, one can consider different models for $\infty$-categories and for all of them there is an analogue of \cite[Corollary 5.2]{Pas}. Namely, one gets  \cite[Corollaries 2.5, 3.2, 4.3, 4.5, 5.1]{Pas}. Using \autoref{dgAadj} one sees that they remain valid when $\K$ is an arbitrary commutative ring.
\end{remark}

\begin{remark}\label{pretr}
Actually we can say that
\[
\adjpair{\VdB}{\ACat}{\dgCat}{\fdgA}
\]
is a Dwyer-Kan adjunction even if we take as weak equivalences, instead of the quasi-equivalences, the pretriangulated (or Morita) equivalences (see \cite[\S 1.4 and Definition 1.36]{Ta} or \cite[Definition 1.4.7.]{Orn}). Indeed, every quasi-equivalence is a pretriangulated equivalence, so it is still true that all the components of both the unit and the counit of the adjunction are pretriangulated equivalences. To prove that $\fdgA$ preserves pretriangulated equivalences it suffices to notice that $\pretr[\ai](\cA)=\pretr[\dg](\cA)$ if $\cA\in\dgCat$ (see \cite[Remark 1.7]{Orn}). As for the fact that $\VdB$ preserve pretriangulated equivalences, suppose that $\fF:\cA\to\cB$ in $\ACat$ induces a quasi-equivalence $\pretr[\ai](\fF)\colon\pretr[\ai](\cA)\to\pretr[\ai](\cB)$. Then
\[
\xymatrix{
\pretr[\ai](\cA)\ar[rrr]^-{\pretr[\ai](\fF)}\ar[d]_-{\pretr[\ai](\ntV[\cA])} & & & \pretr[\ai](\cB)\ar[d]^-{\pretr[\ai](\ntV[\cB])}\\
\pretr[\ai](\VdB(\cA))\ar[rrr]_-{\pretr[\ai](\VdB(\fF))} & & & \pretr[\ai](\VdB(\cB))
}
\]
is a commutative diagram in $\ACat$ in which the upper and the vertical arrows are quasi-equivalences. Hence $\pretr[\ai](\VdB(\fF))=\pretr[\dg](\VdB(\fF))$ is a quasi-equivalence, as well. One can argue in a similar way for Morita equivalences.
\end{remark}

Coming to the proof of \autoref{dgAadj}, this result is proved in \cite[Proposition 2.1]{COS} assuming that $\K$ is a field. Actually the first part of the proof works without changes over an arbitrary commutative ring. Only the argument showing that $\ntV[\cA]\colon\cA\to\VdB(\cA)$ is a quasi-isomorphism for every $\cA\in\ACat$ (respectively, a {\htpiso} when $\cA$ has split units needs to be modified. We now give a different proof, valid over every commutative ring, which occupies the rest of this subsection.

To this aim, we fix a strictly unital $A_\infty$ category $\cA$. As it is explained at the beginning of the proof of \cite[Proposition 2.1]{COS}, $\ntV[\cA]=\nqV[\cA]\comp\ncbi[\cA]$, where the non-unital $A_\infty$ functor $\ncbi[\cA]\colon\cA\to\VdBn(\cA)$ is a {\htpiso} by \autoref{dgAnadj}, and the non-unital dg functor $\nqV[\cA]$ is defined to be the composition
\[
\nqV[\cA]\colon\VdBn(\cA)\mono\aug{\VdBn(\cA)}\epi\aug{\VdBn(\cA)}/\Vid=\VdB(\cA),
\]
with $\Vid=\Vid[\cA]$ the smallest dg ideal of $\aug{\VdBn(\cA)}$ such that $\ntV[\cA]$ is a strictly unital $A_\infty$ functor.

First we recall from \autoref{subsec:themor} (whose notation we adapt and simplify in an obvious way to the setting where $\cB$ is the $0$ dg algebra) that, for every $A,A'\in\cA$,
\[
\VdBn(\cA)(A,A')=\bigoplus_{n\ge0}\grVn{n}(A,A')
\]
as a graded $\K$-module, where
\[
\grVn{n}(A,A'):=\bigoplus_{n_1+\cdots+n_l=n}C_{(n_1,\dots,n_l)}(A,A'),
\]
with
\[
C_{(n_1,\dots,,n_l)}(A,A'):=\bigoplus_{A=A_0,A_1,\dots,A_{l-1},A_l=A'\in\cA}\sh[-1]{C_{n_l}(A_{l-1}A_l)}\otimes\cdots\otimes\sh[-1]{C_{n_1}(A_0,A_1)}
\]
and, for every $i\ge0$,
\[
C_i(A,A'):=\bigoplus_{A=A_0,A_1,\dots,A_{i-1},A_i=A'\in\cA}\sh{\cA(A_{i-1},A_i)}\otimes\cdots\otimes\sh{\cA(A_0,A_1)}.
\]
The differential $d$ on $\VdBn(\cA)(A,A')$ extends $\cod+\com$, where $\cod$ and $\com$ are determined, respectively, by the differential and the comultiplication on the dg cocategory $\Bi(\cA)$. Explicitly, given
\begin{equation}\label{eq:c1}
c=\sh[-1]{(\sh{f_n}\otimes\cdots\otimes\sh{f_1})}\in C_{(n)}(A,A')
\end{equation}
with the $f_i$ homogeneous, we have
\[
\com(c)=\sum_{i=1}^{n-1}(-1)^{\deg(c_{>i})}c_{>i}\otimes c_{\le i}.
\]
The components $\cod^k$ of $\cod$ induced from $\m{k}=\m[\cA]{k}$ are given by 
\begin{equation}\label{eq:cod}
\cod^k(c)=\sum_{i=1}^{n+1-k}\pm\cod^k_i(c),
\end{equation}
(with $1\le k\le n$), where
\[
\cod^k_i(c):=\sh[-1]{\bigl(\sh{f_n}\otimes\cdots\otimes\sh{f_{i+k}}\otimes\sh{\m[\cA]{k}(f_{i+k-1}\otimes\cdots\otimes f_i)}\otimes\sh{f_{i-1}}\otimes\cdots\otimes\sh{f_1}\bigr)}.
\]
As for the signs in \eqref{eq:cod}, we just need to know that they are $(-1)^{\degp(c_{>i})}$ for $k=2$. 

Now we can give the following more explicit description of $\Vid$.

\begin{lem}\label{Vid}
The dg ideal $\Vid$ coincides with the (a priori not necessarily dg) ideal $\Vide$ of $\aug{\VdBn(\cA)}$ generated by all the elements of one of the following two forms:
\begin{enumerate}
\item $1_A-\id_A$, where $A\in\cA$;
\item $c$ as in \eqref{eq:c1} with $n>1$ and such that $f_j=\id_{\tilde{A}}$ for some $j\in\{1,\dots,n\}$ and some $\tilde{A}\in\cA$.
\end{enumerate}
\end{lem}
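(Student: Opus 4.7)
The plan is to prove the two inclusions separately. For $\Vide\subseteq\Vid$, I would observe that the strictly unital $A_\infty$ functor axioms for $\ntV[\cA]=\nqV[\cA]\comp\ncbi[\cA]$ force $\Vid$ to contain all generators of $\Vide$: the condition $\ntV[\cA]^1(\id_A)=1_A\pmod{\Vid}$ gives $1_A-\id_A\in\Vid$, while the condition $\ntV[\cA]^i(f_i\otimes\cdots\otimes f_1)=0$ for $i>1$ with some $f_j=\id$, combined with the identification of $\ncbi[\cA]^i(f_i\otimes\cdots\otimes f_1)$ with $\sh[-1]{(\sh{f_i}\otimes\cdots\otimes\sh{f_1})}\in C_{(i)}$, yields the type~(2) generators. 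For the reverse inclusion $\Vid\subseteq\Vide$, since $\Vid$ is by definition the smallest dg ideal containing those generators, it suffices to show that $\Vide$ is itself closed under $d$; by the graded Leibniz rule this reduces to checking $d(g)\in\Vide$ for each generator $g$. The type~(1) case is immediate: $\com(\id_A)$ is an empty sum and the only non-vanishing $\cod$-piece is $\cod^1(\id_A)=\pm\sh[-1]{\sh{\m{1}(\id_A)}}=0$ by strict unitality, so $d(1_A-\id_A)=0$.

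The bulk of the work is to show $d(c)\in\Vide$ for a type~(2) generator $c=\sh[-1]{(\sh{f_n}\otimes\cdots\otimes\sh{f_1})}$ with $n>1$ and $f_j=\id_{\tilde A}$. I would enumerate the summands of $d(c)=\cod(c)+\com(c)$. Most cases are easy: $\cod^1_i(c)$ with $i\ne j$ and $\cod^k_i(c)$ with $k\ge 2$, $j\notin\{i,\dots,i+k-1\}$, still carry an $\id$ in a word of length $\ge 2$ (the would-be edge case $n-k+1=1$ forces $k=n$ and thus $j\in\{i,\dots,i+k-1\}$, contradiction), so each is itself a type~(2) generator; $\cod^1_j(c)$ vanishes since $\m{1}(\id)=0$; $\cod^k_i(c)$ for $k\ne 2$ with $j\in\{i,\dots,i+k-1\}$ vanishes by strict unitality of $\cA$; and the $\com$ summands $\pm c_{>i}\otimes c_{\le i}$ whose $\id$-containing factor has length $\ge 2$ are type~(2). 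The potentially problematic surviving terms are $\cod^2_{j-1}(c)$ (when $j>1$), $\cod^2_j(c)$ (when $j<n$), and the $\com$ contributions at $i=n-1$ when $f_n=\id$ or at $i=1$ when $f_1=\id$.

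These are handled by pairwise cancellation modulo $\Vide$. If $c$ has at least two $\id$-positions, any single contraction above leaves another $\id$ in the result, so everything lies in $\Vide$ as type~(2). Assuming $j$ is the unique $\id$-position, three subcases remain. When $1<j<n$, both $\cod^2_{j-1}(c)$ and $\cod^2_j(c)$ produce the same length-$(n-1)$ word (the sequence with $f_j$ removed) with opposite signs $(-1)^{\degp(c_{>j-1})}$ and $(-1)^{\degp(c_{>j})}$, their difference being odd because inserting the extra letter $\sh{\id}$ shifts degree by $-1$; they cancel. When $j=1$, the contracted $\cod^2_1(c)=(-1)^{\degp(c_{>1})}c_{>1}$ pairs with the $\com$ term $(-1)^{\deg(c_{>1})}c_{>1}\comp\id_{\tilde A}$; writing $\id_{\tilde A}=1_{\tilde A}-(1_{\tilde A}-\id_{\tilde A})$ gives $c_{>1}\comp\id_{\tilde A}\equiv c_{>1}\pmod{\Vide}$, while the relation $(-1)^{\degp}=-(-1)^{\deg}$ produces cancellation; the case $j=n$ is symmetric. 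The main obstacle I anticipate is exactly this sign bookkeeping: one must carefully combine the explicit $\cod^2$-sign $(-1)^{\degp(c_{>i})}$, the $\com$-sign $(-1)^{\deg(c_{>i})}$, and the shift conventions so that each boundary pair cancels precisely and no spurious term is left outside $\Vide$.
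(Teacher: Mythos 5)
Your strategy is the same as the paper's: prove $\Vide\subseteq\Vid$ from strict unitality of $\ntV[\cA]$, and prove $\Vid\subseteq\Vide$ by checking, via the Leibniz rule, that $d$ sends each generator of $\Vide$ into $\Vide$, disposing of most summands as type-(2) elements and handling the surviving $\cod^2$- and $\com$-terms by signed cancellation and the rewriting $\id_{\tilde A}=1_{\tilde A}-(1_{\tilde A}-\id_{\tilde A})$. There is, however, one step that fails as stated: the claim that when $c$ has at least two $\id$-positions every summand of $d(c)$ individually lies in $\Vide$ as a type-(2) element. Take $n=2$ and $c=\sh[-1]{(\sh{\id_{\tilde A}}\otimes\sh{\id_{\tilde A}})}$. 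Then $\cod^2_1(c)=\pm\sh[-1]{\sh{\id_{\tilde A}}}$ is a word of length one, hence not of type (2), and the comultiplication term is $\pm\,c_{>1}\otimes c_{\le 1}$ with both factors of length one; modulo $\Vide$ this product is congruent to $\sh[-1]{\sh{\id_{\tilde A}}}$, which does not lie in $\Vide$. So neither summand is covered by your argument, and the case also escapes your three final subcases, since there you explicitly assume that $j$ is the \emph{unique} $\id$-position. (The lemma is of course still true here: the signed combination equals $\pm\,c_{>1}\otimes(1_{\tilde A}-\id_{\tilde A})\in\Vide$, i.e.\ exactly your $j=1$ computation.)

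The repair is immediate and lands you on the paper's proof: drop the dichotomy and fix an arbitrary $j$ with $f_j=\id_{\tilde A}$, with no uniqueness assumption. Since the sign of $\cod^2_i$ is $(-1)^{\degp(c_{>i})}$ while that of the corresponding $\com$-term is $(-1)^{\deg(c_{>i})}$, one has
\[
\cod^2(c)+\com(c)=\sum_{i=1}^{n-1}(-1)^{\degp(c_{>i})}\bigl(\cod^2_i(c)-c_{>i}\otimes c_{\le i}\bigr),
\]
and it suffices to treat each index $i$ separately: for $i\ne j-1,j$ both members lie in $\Vide$ (each being a type-(2) generator, possibly multiplied by another word); for $j=1$ the difference at $i=1$ is $c_{>1}\otimes(1_{\tilde A}-\id_{\tilde A})$; for $j=n$ the difference at $i=n-1$ is $(1_{\tilde A}-\id_{\tilde A})\otimes c_{<n}$; and for $1<j<n$ the two equal contractions $\cod^2_{j-1}(c)=\cod^2_j(c)$ cancel in the signed sum (opposite signs, as you note), while the $\com$-terms at $i=j-1,j$ are of type (2). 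None of this uses uniqueness of $j$, so the edge case above is covered; apart from this point, your case analysis, sign bookkeeping and both inclusions agree with the paper's argument.
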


\begin{proof}
If $c$ is as in \eqref{eq:c1}, we have
\[
\ntV[\cA]^n(f_n\otimes\cdots\otimes f_1)=\nqV[\cA]\bigl(\ncbi[\cA]^n(f_n\otimes\cdots\otimes f_1)\bigr)=\pm\nqV[\cA](c).
\]
Since $\ntV[\cA]$ is strictly unital, it follows that $\nqV[\cA](c)=0$ if $c$ is a generator of $\Vide$ of the form $(2)$. On the other hand, $\nqV[\cA](\id_A)=\id_A$ coincides with the image of $1_A$ through the projection $\aug{\VdBn(\cA)}\epi\VdB(\cA)$, for every $A\in\cA$. Therefore $\Vid$ contains also the generators of $\Vide$ of the form $(1)$, hence $\Vide\subseteq\Vid$. To prove the other inclusion it is clearly enough to show that $d(c)\in\Vide$ for every generator $c$ of $\Vide$. As this is obviously true when $c$ is of the form $(1)$, we can assume that $c$ is of the form $(2)$. Then it is clear from the definition that $\cod^k(c)\in\Vide$ if $k\ne2$, and so it remains to prove that $\Vide$ contains
\[
\cod^2(c)+\com(c)=\sum_{i=1}^{n-1}(-1)^{\degp(c_{>i})}\cod^2_i(c)+\sum_{i=1}^{n-1}(-1)^{\deg(c_{>i})}c_{>i}\otimes c_{\le i}=\sum_{i=1}^{n-1}(-1)^{\degp(c_{>i})}\bigl(\cod^2_i(c)-c_{>i}\otimes c_{\le i}\bigr).
\]
Now, it is immediate to see that (for $0<i<n$) $\cod^2_i(c)\in\Vide$ if $i\ne j,j-1$ and $c_{>i}\otimes c_{\le i}\in\Vide$ if $1<i<n-1$ or $i=1\ne j$ or $i=n-1\ne j-1$. Moreover, if $j=1$ then
\[
\cod^2_1(c)-c_{>1}\otimes c_{\le 1}=c_{>1}\otimes(1_{\tilde{A}}-\id_{\tilde{A}})\in\Vide.
\]
Similarly, if $j=n$ then
\[
\cod^2_{n-1}(c)-c_{\ge n}\otimes c_{<n}=(1_{\tilde{A}}-\id_{\tilde{A}})\otimes c_{<n}\in\Vide.
\]
Finally, if $1<j<n$ then
\[
\cod^2_{j-1}(c)=\cod^2_j(c)=\sh[-1]{(\sh{f_n}\otimes\cdots\sh{f_{j+1}}\otimes\sh{f_{j-1}}\otimes\cdots\otimes\sh{f_1})},
\]
whence $(-1)^{\degp(c_{\ge j})}\cod^2_{j-1}(c)+(-1)^{\degp(c_{>j})}\cod^2_j(c)=0$.
\end{proof}

From \autoref{Vid} we immediately deduce the following result.

\begin{cor}\label{Vnid}
The non-unital dg functor $\nqV[\cA]\colon\VdBn(\cA)\to\VdB(\cA)$ is full and its kernel $\Vnid=\Vnid[\cA]$ is a dg ideal of $\VdBn(\cA)$ such that $\Vnid(A,A')$ (for every $A,A'\in\cA$) is the $\K$-subspace of $\VdBn(\cA)(A,A')$ generated by all the elements of one of the following two forms, where $c^l_{n_l}\otimes\cdots\otimes c^1_{n_1}\in C_{(n_1,\dots,n_l)}(A,A')$:
\begin{enumerate}
\item $c^l_{n_l}\otimes\cdots\otimes c^1_{n_1}-c^l_{n_l}\otimes\cdots\otimes c^{i+1}_{n_{i+1}}\otimes\id_{\tilde{A}}\otimes c^i_{n_i}\otimes\cdots\otimes c^1_{n_1}$ (for suitable $\tilde{A}\in\cA$), with $n_1+\cdots+n_l>0$ and $i\in\{0,\dots,l\}$;
\item $c^l_{n_l}\otimes\cdots\otimes c^1_{n_1}$, with $c^i_{n_i}$ of the form $(2)$ in \autoref{Vid} for some $i\in\{1,\dots,l\}$.
\end{enumerate}
\end{cor}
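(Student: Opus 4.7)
The plan rests on \autoref{Vid}, which identifies $\Vid$ with the ideal $\Vide$ of $\aug{\VdBn(\cA)}$ generated by the explicit classes $(1)$ and $(2)$ listed there. First I would dispatch fullness of $\nqV[\cA]$. As a $\K$-module, $\aug{\VdBn(\cA)}(A,A')$ is the sum of $\VdBn(\cA)(A,A')$ with (if $A=A'$) the rank-one submodule $\K\cdot 1_A$; since $1_A-\id_A\in\Vid$ and $\id_A\in\VdBn(\cA)(A,A)$, any class in $\VdB(\cA)(A,A')$ has a representative already in $\VdBn(\cA)(A,A')$, so $\nqV[\cA]$ is surjective on Hom spaces. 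That $\Vnid$ is a dg ideal is automatic from it being the kernel of a dg functor.

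Let $\Vnid'$ denote the $\K$-subspace described in the statement. The inclusion $\Vnid'\subseteq\Vnid$ is direct: a generator of form $(2)$ of the corollary lies in $\VdBn(\cA)$ and has as one of its tensor factors an element of form $(2)$ of \autoref{Vid}, hence belongs to $\Vide=\Vid$. A generator of form $(1)$ can be rewritten as $y\cdot(1_{\tilde A}-\id_{\tilde A})\cdot z$ with $y:=c^l_{n_l}\otimes\cdots\otimes c^{i+1}_{n_{i+1}}$ and $z:=c^i_{n_i}\otimes\cdots\otimes c^1_{n_1}$ read inside $\aug{\VdBn(\cA)}$: indeed $y\cdot 1_{\tilde A}\cdot z=y\cdot z$ because $1_{\tilde A}$ is a unit, while $y\cdot\id_{\tilde A}\cdot z$ is simply the concatenation with $\id_{\tilde A}$ inserted. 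Hence both types of generators lie in $\Vid\cap\VdBn(\cA)=\Vnid$.

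For the reverse inclusion $\Vnid\subseteq\Vnid'$, where the real content lies, I would take an arbitrary $x\in\Vnid$ and write it as a finite sum $x=\sum_\alpha y_\alpha\cdot g_\alpha\cdot z_\alpha$ with $y_\alpha,z_\alpha\in\aug{\VdBn(\cA)}$ and $g_\alpha$ a generator of type $(1)$ or $(2)$ of \autoref{Vid}. Decompose each $y_\alpha=y_\alpha'+\mu_\alpha\,1$ and $z_\alpha=z_\alpha'+\nu_\alpha\,1$ with the primed parts in $\VdBn(\cA)$ and $1$ the relevant augmentation unit, then expand. Contributions with $g_\alpha$ of type $(2)$ stay in $\VdBn(\cA)$ with the $\id$-factor preserved and are visibly of form $(2)$ of the corollary. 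Contributions with $g_\alpha=1_{\tilde A_\alpha}-\id_{\tilde A_\alpha}$ split into four pieces: three of them lie in $\VdBn(\cA)$ and have the shape required by form $(1)$ (with insertion position $i$ equal to $l$, $0$, or intermediate, according to which of the two factors was an augmentation unit), while the fourth is a pure augmentation residue $\mu_\alpha\nu_\alpha(1_{\tilde A_\alpha}-\id_{\tilde A_\alpha})$.

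The main obstacle, which is really careful bookkeeping rather than a conceptual difficulty, is to verify that these augmentation residues cancel across the sum. Using the $\K$-module decomposition $\aug{\VdBn(\cA)}(A,A)=\VdBn(\cA)(A,A)\oplus\K\cdot 1_A$, the hypothesis $x\in\VdBn(\cA)$ forces, for each $A$, the total $1_A$-coefficient, namely $\sum\mu_\alpha\nu_\alpha$ over those $\alpha$ with $g_\alpha$ of type $(1)$ and $\tilde A_\alpha=A$, to vanish; the residue contributes $-\mu_\alpha\nu_\alpha$ to the $\id_A$-component, so the matching $\id_A$-contributions also sum to zero. The surviving terms express $x$ as an element of $\Vnid'$, completing the proof.
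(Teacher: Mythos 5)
Your argument is correct and amounts exactly to the verification the paper leaves implicit: the paper deduces this corollary from \autoref{Vid} with no written proof (``we immediately deduce''), and your expansion of an arbitrary element of $\Vid$ in terms of the generators of \autoref{Vid}, with the observation that the $1_A$-components (and hence the residues $\mu_\alpha\nu_\alpha(1_A-\id_A)$) must cancel because the element lies in $\VdBn(\cA)$, is precisely the routine bookkeeping intended. Nothing is missing.
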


For every $A,A'\in\cA$ the filtration $\grVn{\le n}(A,A'):=\bigoplus_{m\le n}\grVn{m}(A,A')$ on $\VdBn(\cA)(A,A')$ (where $n\ge0$) induces a filtration $\grVnid{\le n}(A,A'):=\grVn{\le n}(A,A')\cap\Vnid(A,A')$ on $\Vnid(A,A')$ and a filtration
\[
\filt{n}\VdB(\cA)(A,A'):=\bigl(\grVn{\le n}(A,A')+\Vnid(A,A')\bigr)/\Vnid(A,A')\iso\grVn{\le n}(A,A')/\grVnid{\le n}(A,A')
\]
on $\VdB(\cA)(A,A')\iso\VdBn(\cA)(A,A')/\Vnid(A,A')$.

Since $\ncbi[\cA]^1\colon\cA(A,A')\to\grVn{\le1}(A,A')$ is an isomorphism of complexes and $\grVnid{\le1}(A,A')=0$, we see that $\ntV[\cA]^1\colon\cA(A,A')\to\filt{1}\VdB(\cA)(A,A')$ is an isomorphism, as well. Therefore we just need to show that the inclusion $\filt{1}\VdB(\cA)(A,A')\mono\VdB(\cA)(A,A')$ is a quasi-isomorphism, and even a homotopy equivalence if $\cA$ has split units. By \autoref{filthequiv} and \autoref{filtqiso} it is enough to prove that for every $n>1$ the complex $\gr{n}\VdB(\cA)(A,A')$ is null-homotopic, and also that the inclusion $\filt{n-1}\VdB(\cA)(A,A')\mono\filt{n}\VdB(\cA)(A,A')$ splits as a morphism of graded $\K$-modules if $\cA$ has split units.

Now, recall from \autoref{null} and its proof that, for $n>1$, the complex $\grVn{n}(A,A')$ (endowed with the differential $d$ extending $\cod^1+\com$) is null-homotopic, and a map $r\colon\grVn{n}(A,A')\to\grVn{n}(A,A')$ of degree $-1$ satisfying $d\comp r+r\comp d=\id$ can be defined (also for $n=1$) as follows. By linearity an element of $\grVn{n}(A,A')$ can be assumed to be of the form
\begin{equation}\label{eq:c}
c=c^l\otimes\cdots\otimes c^1\in C_{(n_1,\dots,n_l)}(A,A'),
\end{equation}
where $n_1+\cdots+n_l=n$ and $c^k\in C_{(n_k)}(A_{k-1},A_k)$ homogeneous (for $k=1,\dots,l$), with $A_0=A$ and $A_l=A'$. Then
\[
r(c):=
\begin{cases}
0 & \text{if $n_l>1$ or $n=1$} \\
r(c^l\otimes c^{l-1})\otimes c^{l-2}\otimes\cdots\otimes c^1 & \text{if $n_l=1<n$},
\end{cases}
\]
where, if $n_l=1<n$, $c^t=f\in\cA(A_{l-1},A_l)$ and $c^{l-1}=\sh[-1]{(\sh{f_{n_{l-1}}}\otimes\cdots\otimes\sh{f_1})}$, then
\[
r(c^l\otimes c^{l-1}):=(-1)^{\deg(f)}\sh[-1]{(\sh{f}\otimes\sh{f_{n_{l-1}}}\otimes\cdots\otimes\sh{f_1})}.
\]
Since
\[
\gr{n}\VdB(\cA)(A,A')\iso\bigl(\grVn{\le n}(A,A')+\Vnid(A,A')\bigr)/\bigl(\grVn{<n}(A,A')+\Vnid(A,A')\bigr)\iso\grVn{n}(A,A')/\grVnid{n}(A,A'),
\]
where 
\[
\grVnid{n}(A,A'):=\grVn{n}(A,A')\cap\bigl(\grVn{<n}(A,A')+\grVnid{\le n}(A,A')\bigr),
\]
from \autoref{Vnull} we deduce that $\gr{n}\VdB(\cA)(A,A')$ is null-homotopic for $n>1$.

\begin{lem}\label{Vnull}
The map $r\colon\grVn{n}(A,A')\to\grVn{n}(A,A')$ preserves the subcomplex $\grVnid{n}(A,A')$ for every $n>1$ and every $A,A'\in\cA$.
\end{lem}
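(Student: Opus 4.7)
The plan is to produce an explicit $\K$-linear spanning set for $\grVnid{n}(A,A')$ and then verify the preservation property by a direct case analysis on the formula defining $r$.

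Unwinding the definition $\grVnid{n}(A,A')=\grVn{n}(A,A')\cap(\grVn{<n}(A,A')+\grVnid{\le n}(A,A'))$, one sees that $\grVnid{n}(A,A')$ is the image of $\Vnid(A,A')\cap\grVn{\le n}(A,A')$ under the projection $\pi_n\colon\grVn{\le n}(A,A')\epi\grVn{n}(A,A')$ with kernel $\grVn{<n}(A,A')$. Applying $\pi_n$ to the two types of generators of $\Vnid$ listed in \autoref{Vnid} and doing a short bookkeeping of gradings, I claim that $\grVnid{n}(A,A')$ is the $\K$-submodule of $\grVn{n}(A,A')$ spanned by the basis tensors $c=c^l\otimes\cdots\otimes c^1\in C_{(n_1,\ldots,n_l)}(A,A')$ (with $n_1+\cdots+n_l=n$) such that at least one of the morphisms $f^{(k)}_j$ appearing inside some expansion $c^k=\sh[-1]{(\sh{f^{(k)}_{n_k}}\otimes\cdots\otimes\sh{f^{(k)}_1})}$ equals an identity $\id_{\tilde A}$ for some $\tilde A\in\cA$. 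Indeed, type (2) generators in $\grVn{n}$ directly yield tensors whose identity sits inside a factor of length greater than $1$; the top-graded projections of type (1) generators $c-c'$ with $c\in\grVn{n-1}$ (allowed since $n-1\ge 1$) yield tensors whose identity occupies a whole $C_{(1)}$ factor; and any remaining contribution coming from type (1) generators with $c\in\grVn{n}$ is only accessible through combinations whose $\grVn{n+1}$-parts cancel, and such combinations are easily seen to reduce to $\K$-linear combinations of differences of tensors each of which still carries an identity.

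With this characterization in hand, the verification proceeds by a case analysis on a spanning tensor $c=c^l\otimes\cdots\otimes c^1$ of $\grVnid{n}(A,A')$. If $n_l>1$, then $r(c)=0\in\grVnid{n}(A,A')$. Otherwise $n_l=1$, so $l\ge 2$; writing $c^l=f$, the map $r$ replaces the pair $c^l\otimes c^{l-1}$ by a single factor $\sh[-1]{(\sh{f}\otimes\sh{f^{(l-1)}_{n_{l-1}}}\otimes\cdots\otimes\sh{f^{(l-1)}_1})}\in C_{(n_{l-1}+1)}$ of length $n_{l-1}+1\ge 2$, and leaves $c^{l-2},\ldots,c^1$ unchanged. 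If the distinguished identity of $c$ sits in $c^j$ for some $j\le l-2$, then $c^j$ is untouched and $r(c)$ still contains it in the same position. If the identity sits inside $c^{l-1}$, then after $r$ it sits inside the merged factor, whose length is $\ge 2$. Finally, if $c^l=\id_{\tilde A}$ is itself the identity, then $f=\id_{\tilde A}$ appears as one of the morphisms inside the merged factor of length $\ge 2$. In every case, $r(c)$ is again a spanning tensor of $\grVnid{n}(A,A')$, which concludes the proof.

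The main subtle point, and the step that deserves the most care, is the characterization of $\grVnid{n}$ as the span of tensors containing an identity: one has to verify that no identity-free elements of $\grVn{n}$ can arise through exotic cancellations among type (1) generators of $\Vnid$ with overlapping insertion patterns. Once that characterization is established, the preservation by $r$ is forced by the simple rewriting rule for $r$, and the remainder of the argument reduces to the mechanical case analysis above.
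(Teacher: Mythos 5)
Your closure check for $r$ is not the problem: granting your description of $\grVnid{n}(A,A')$, the case analysis showing that $r$ sends an identity-carrying tensor to (a sign times) another identity-carrying tensor or to $0$ is correct and close in spirit to what the paper does. The genuine gap is the description itself. By definition $\grVnid{n}=\grVn{n}\cap\bigl(\grVn{<n}+\grVnid{\le n}\bigr)$ is the set of weight-$n$ components of elements of $\Vnid\cap\grVn{\le n}$, and the problematic elements are exactly the ones you flag: $\K$-linear combinations in which generators of type (1) of \autoref{Vnid} whose identity-free half already has weight $n$ (or higher) appear, with their weight-$(n+1)$ and higher components cancelling against insertions at other positions, against type (2) generators, and against further type (1) generators of larger base weight. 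Declaring that such combinations are ``easily seen to reduce'' to identity-carrying differences is an assertion, not an argument, and it is precisely the point where an arbitrary base ring bites: this lemma must hold for every strictly unital $\cA$ (it feeds the quasi-isomorphism part of \autoref{dgAadj}), so condition \eqref{eq:splitunit} is not available, the inclusions $\K\,\id_{\tilde A}\mono\cA(\tilde A,\tilde A)$ need not split or even be injective, the hom-modules have no bases (so ``basis tensors'' do not exist and linear relations among decomposable tensors are exactly the danger), and the kernels and overlaps of the insertion maps $x''\otimes x'\mapsto x''\otimes\id_{\tilde A}\otimes x'$ are where torsion phenomena could enter. Note also that the cancellations are not confined to the $\grVn{n+1}$-part, since chains of type (1) generators of higher base weight and type (2) generators of higher weight can participate, so even the shape of the reduction you sketch is incomplete.

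The paper is organized so as never to need your characterization of $\grVnid{n}$. It reduces to the generators of $\Vnid$ listed in \autoref{Vnid} taken inside $\grVn{\le n}$ and computes $r$ on them without separating the two homogeneous pieces of a type (1) generator $c-c'$: a type (1) generator of base weight at most $n-2$ is sent into $\grVn{<n}$; for base weight $n-1$ one has $r(c)\in\grVn{<n}$ and, depending on whether the identity is inserted at one of the two leftmost positions or not, either $r(c')$ is zero or again a type (2) generator, or $r(c)-r(c')$ is zero or again a type (1) generator; type (2) generators go to zero or to type (2) generators. In all cases $r$ lands in $\grVn{<n}+\grVnid{\le n}$, which is all that is required, and at no point does one have to decide which individual weight-$n$ tensors lie in $\grVnid{n}$. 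So either supply an actual proof of your spanning claim (which, in effect, amounts to producing a decomposition of elements of $\grVnid{\le n}$ into generators compatible with the weight filtration and then projecting), or switch to the paper's generator-by-generator computation, which keeps the differences $c-c'$ intact and thereby sidesteps the cancellation analysis you leave unproved.
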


\begin{proof}
As $r$ preserves both $\grVn{n}(A,A')$ and $\grVn{<n}(A,A')$, it is enough to prove that, if $c\in\grVnid{\le n}(A,A')$, then $r(c)\in\grVn{<n}(A,A')+\grVnid{\le n}(A,A')$. We can clearly assume that $c$ is as in part (1) or (2) of \autoref{Vnid}. In the latter case it is obvious from the definition that $r(c)$ is either $0$ or a generator of the same form in $\grVnid{\le n}(A,A')$. So we can assume $c$ to be of the form $(1)$ with $n_1+\cdots+n_l=n-1$, and it is enough to show that $r(c')\in\grVn{<n}(A,A')+\grVnid{\le n}(A,A')$, where
\[
c':=c^l_{n_l}\otimes\cdots\otimes c^{i+1}_{n_{i+1}}\otimes\id_{\tilde{A}}\otimes c^i_{n_i}\otimes\cdots\otimes c^1_{n_1}.
\]
Now, if $i\ge l-1$, then $r(c')$ is either $0$ or a generator of the form $(2)$ in $\grVnid{\le n}(A,A')$. On the other hand, if $i<l-1$, then $r(c')\in\grVn{<n}(A,A')+\grVnid{\le n}(A,A')$ because $r(c^l_{n_l}\otimes\cdots\otimes c^1_{n_1})\in\grVn{<n}(A,A')$ and $r(c^l_{n_l}\otimes\cdots\otimes c^1_{n_1})-r(c')$ is either $0$ or a generator of the form $(1)$ in $\grVnid{\le n}(A,A')$.
\end{proof}

Finally, \autoref{Vsplit} easily implies that the inclusion $\filt{n-1}\VdB(\cA)(A,A')\mono\filt{n}\VdB(\cA)(A,A')$ splits as a morphism of graded $\K$-modules if $\cA$ has split units and $n>1$.

\begin{lem}\label{Vsplit}
If $\cA$ has split units, then for every $n>1$ and every $A,A'\in\cA$ there exists a morphism of graded $\K$-modules $u\colon\grVn{n}(A,A')\to\grVn{<n}(A,A')$ such that the map
\[
\tilde{u}:=
\begin{pmatrix}
\id & u    
\end{pmatrix}
\colon\grVn{<n}(A,A')\oplus\grVn{n}(A,A')=\grVn{\le n}(A,A')\to\grVn{<n}(A,A')
\]
sends $\grVnid{\le n}(A,A')$ to $\grVnid{<n}(A,A')$.
\end{lem}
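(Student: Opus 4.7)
Using the splitting in condition \eqref{eq:splitunit}, fix for each $A \in \cA$ a graded $\K$-module $V_A$ with $\cA(A,A) = \K\id_A \oplus V_A$ (so $V_A^i = \cA(A,A)^i$ for $i \neq 0$), and set $V_{A,A'} := \cA(A,A')$ for $A \neq A'$. Propagating the resulting splittings $\cA(A_{j-1},A_j) = (\K\id_{A_{j-1}})^{\delta_{A_{j-1},A_j}} \oplus V_{A_{j-1},A_j}$ through every tensor factor of $\grVn{n}(A,A')$ produces a decomposition of graded $\K$-modules $\grVn{n}(A,A') = \bigoplus_\sigma P_\sigma$, where $\sigma$ parametrises a composition $(n_1,\ldots,n_l)$ of $n$, a choice of intermediate objects, and at each diagonal factor a label ``$\K\id$'' or ``$V$''.

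Call a block $c^k$ of $\sigma$ \emph{internally degenerate} if $n_k > 1$ and $\sigma$ picks ``$\K\id$'' at some factor inside $c^k$, and call it a \emph{lone identity} if $n_k = 1$ and $\sigma$ picks ``$\K\id$'' at its unique factor. Define $u \colon \grVn{n}(A,A') \to \grVn{<n}(A,A')$ summand by summand: on $P_\sigma$ with no lone identity, or with an internally degenerate block, set $u = 0$; on the remaining $P_\sigma$, let $u(c_\sigma)$ be the pure tensor obtained from $c_\sigma$ by deleting every lone identity block, absorbing the associated scalars via $\K\id_A \iso \K$, with the safeguard that exactly one lone identity is retained when every block of $\sigma$ is a lone identity (a case which forces $A = A'$). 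Each such deletion also removes the matching intermediate object, which is legitimate because $A_{k-1} = A_k$ whenever block $k$ is a lone identity. The hypothesis $n > 1$ ensures the output always lies in $\grVn{n'}$ with $1 \le n' < n$.

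To conclude, one must check $\tilde u(\grVnid{\le n}(A,A')) \subseteq \grVnid{<n}(A,A')$, which reduces to verifying $u(y) \equiv y \pmod{\Vnid}$ for every $y$ in the projection $Y$ of $\grVnid{\le n}(A,A')$ onto $\grVn{n}(A,A')$. By inspection of the generators in \autoref{Vnid}, $Y$ is $\K$-spanned by two families: pure tensors in $\grVn{n}$ containing an internal $\id$ factor (from type (2) generators), and pure tensors of $\grVn{n}$ produced by inserting a lone $\id$ block into a pure tensor of $\grVn{n-1}$ (the $\grVn{n}$-components of type (1) generators). After expansion in the splitting, the first family is supported on summands $P_\sigma$ with an internally degenerate block, where $c_\sigma \in \Vnid$ by type (2) and hence $u(c_\sigma) - c_\sigma = -c_\sigma \in \Vnid$; the second family is supported on summands with at least one lone identity, and there each deletion of a lone identity block is exactly an instance of a type (1) relation (with the safeguard guaranteeing both sides of the relation have length $\ge 1$), so iteration gives $u(c_\sigma) \equiv c_\sigma \pmod{\Vnid}$. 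The main obstacle is making sure that the expansion of the two kinds of generators lands precisely in the summands where $u$ was designed to act, and in particular that the ``all lone identities'' edge case is handled by retaining a single block; once this bookkeeping is in place the verification is purely combinatorial.
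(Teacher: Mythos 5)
Your construction is correct, and it packages the argument differently from the paper. The paper does not decompose $\grVn{n}(A,A')$ at all: it only uses the retraction $p\colon\cA(\tilde{A},\tilde{A})\to\K$ coming from \eqref{eq:splitunit} and defines $u$ on a pure tensor $c$ by the single inclusion--exclusion formula $u(c)=\sum_{\emptyset\ne S\subseteq S(c)}(-1)^{\card{S}-1}\prod_{i\in S}p(c^i)\,u_S(c)$, where $S(c)$ is the set of length-one diagonal blocks and $u_S$ deletes them (with the same ``keep one identity'' convention when $S=S(c)=\{1,\dots,l\}$); the ideal condition is then checked by a one-line telescoping identity, which exhibits $-\tilde{c}+\tilde{u}(\tilde{c})$ directly as a combination of type~(1) generators, using $p(\id)=1$. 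Your version instead uses the full splitting $\cA(A,A)=\K\id_A\oplus V_A$ to decompose $\grVn{n}(A,A')$ into summands and defines $u$ piecewise as ``delete all lone identities'' (zero on internally degenerate or identity-free summands); on tensors all of whose blocks have length one your $u$ coincides with the paper's on the nose, and in general the two maps differ only by terms that are themselves type~(2) elements of $\Vnid$, so the two constructions are essentially the same idea in different clothing. What the paper's formula buys is economy: no case analysis on summands, and the whole verification is a single alternating-sum computation. What your route buys is transparency: the congruence $u(c_\sigma)\equiv c_\sigma$ is reduced to iterating single type-(1) deletions, each visibly a generator of \autoref{Vnid}, with your safeguard correctly ensuring the positivity condition $n_1+\cdots+n_l>0$. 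Two small points of bookkeeping you should make explicit: for the second family, a component $c_\sigma$ may simultaneously carry a lone identity and an internally degenerate block, and there the deletion argument does not apply --- instead $u(c_\sigma)=0$ and $c_\sigma\in\Vnid$ by type~(2), exactly as in your first family; and your assertion that the projection $Y$ is spanned by the two listed families is the same implicit ``it suffices to test on generators lying in $\grVn{\le n}$'' reduction that the paper itself makes without comment, so it is acceptable here, but it is the one step in both arguments that is asserted rather than proved.
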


\begin{proof}
By hypothesis for every $\tilde{A}\in\cA$ there exists a morphism of graded $\K$-modules $p\colon\cA(\tilde{A},\tilde{A})\to\K$ such that $p(\id_{\tilde{A}})=1$. First, by linearity, every $c\in\grVn{n}(A,A')$ can be assumed to be as in \eqref{eq:c}. Setting 
\[
S(c):=\{i=1,\dots,l\st n_i=1\text{ and }A_{i-1}=A_i\},
\]
we denote, for every subset $S$ of $S(c)$, by $u_S(c)$ the expression obtained from $c$ by deleting the terms $c^i$ with $i\in S$. In case $S=S(c)=\{1,\dots,l\}$ (which implies $A=A'$), we mean $u_S(c)=\id_A$. Now we can define 
\[
u(c):=\sum_{\emptyset\ne S\subseteq S(c)}(-1)^{\card{S}-1}\prod_{i\in S}p(c^i)u_S(c).
\]
It is immediate from the definition that $\tilde{u}$ sends a generator of the form $(2)$ in \autoref{Vnid} to a linear combination of generators of the same form. Hence, given $c$ as above with the additional assumption that there exists $j\in\{1,\dots,l\}$ such that $c^j=\id_{A_j}$ (in particular, $j\in S(c)$), we just need to show that $\tilde{u}(\tilde{c})\in\Vnid(A,A')$, where $\tilde{c}:=u_{\{j\}}(c)-c\in\grVnid{\le n}(A,A')$ is a generator of the form $(1)$. Equivalently, we must prove that $-\tilde{c}+\tilde{u}(\tilde{c})\in\Vnid(A,A')$. In fact we have
\begin{multline*}
-\tilde{c}+\tilde{u}(\tilde{c})=-u_{\{j\}}(c)+c+u_{\{j\}}(c)-u(c)=c-\sum_{\emptyset\ne S\subseteq S(c)}(-1)^{\card{S}-1}\prod_{i\in S}p(c^i)u_S(c) \\
=\sum_{S\subseteq S(c)}(-1)^{\card{S}}\prod_{i\in S}p(c^i)u_S(c)=\sum_{S\subseteq S(c)\setminus\{j\}}(-1)^{\card{S}}\prod_{i\in S}p(c^i)(u_{S\cup\{j\}}(c)-u_S(c)),
\end{multline*}
and each $u_{S\cup\{j\}}(c)-u_S(c)$ is a generator of the form $(1)$ (or $0$ if $S\cup\{j\}=S(c)=\{1,\dots,l\}$).
\end{proof}

This concludes the proof of \autoref{dgAadj}.

\begin{remark}\label{ACatdg}
The proof of the equivalence between $\InfdgCat$ and $\InfACat$ can be modified in an obvious way to prove that there is an equivalence between $\InfdgCat$ and $\Inf\ACatdg$. Hence $\InfACat$ and $\Inf\ACatdg$ are equivalent, as well.
\end{remark}

\subsection{Strictly unital vs.\ unital $A_\infty$ categories}\label{subsec:suu}

A proof of the natural equivalence of $\infty$-categories $\InfACat\to\InfACatu$ has already appeared in \cite{Tan} (see Theorem 1.1 therein). Here we provide a different and simpler proof (the last part of which has already been included in \cite{Tan}). A first simplification consists in the fact that, due to the equivalences $\InfdgCat\to\InfACat$ and $\InfdgCatu\to\InfACatu$, already obtained in \autoref{subsec:Pas}, it is enough to prove that the $\infty$-functor $\Inf\fdgu\colon\InfdgCat\to\InfdgCatu$ is an equivalence of $\infty$-categories. Here $\fdgu\colon\dgCat\to\dgCatu$ denotes the natural inclusion functor, which obviously preserves quasi-equivalences.

First of all, given $\cA\in\dgCatu$ and denoting by $\Id_\cA$ the family of all units in $\cA$, we consider the full dg subcategory $\cC$ of $\pretr(\aug{\cA})$ consisting of the cones (in $\pretr(\aug{\cA})$) of all the morphisms in $\Id_\cA$. Then we define $\fudg(\cA)$ to be the full dg subcategory of the Drinfeld quotient $\pretr(\aug{\cA})/\cC$ whose objects are those in the image of the natural non-unital dg functor
\begin{equation}\label{eq:nudg}
\nudge[\cA]\colon\cA\mono\aug{\cA}\mono\pretr(\aug{\cA})\to\pretr(\aug{\cA})/\cC.
\end{equation}
We will soon see that $\nudge[\cA]$ is actually unital (but not strictly unital). For now it is useful to observe that, while $\cA\mono\aug{\cA}$ is really not unital, the other two maps in \eqref{eq:nudg} are (strictly unital) dg functors. The crucial result is then the following.

\begin{lem}\label{nudgqiso}
The non-unital dg functor $\nudg[\cA]\colon\cA\to\fudg(\cA)$ induced by $\nudge[\cA]$ is a quasi-isomorphism for every $\cA\in\dgCatu$.
\end{lem}

\begin{proof}
Instead of the argument in \cite[Section 2.3]{Tan} we propose the following much shorter one.

Since $\nudge[\cA]$ is injective on objects, $\nudg[\cA]$ is bijective on objects by definition of $\fudg(\cA)$. Thus we just need to show that the $\nudge[\cA]$ is quasi-fully faithful, or, equivalently that the non-unital graded functor
\[
H^*(\nudge[\cA])\colon H^*(\cA)\mono\aug{H^*(\cA)}=H^*(\aug{\cA})\mono H^*\bigl(\pretr(\aug{\cA})\bigr)\to H^*\bigl(\pretr(\aug{\cA})/\cC\bigr)
\]
is fully faithful.

It is an easy exercise (using the fact that the elements of $\Id_\cA$ are units) to prove that the complex of $\K$-modules $\pretr(\aug{\cA})(X,Y)$ is h-flat (even h-projective) for every $X\in\pretr(\aug{\cA})$ and every $Y\in\cC$. Hence \cite[Theorem 3.4]{Dr} shows that the natural functor
\begin{equation}\label{eq:Drinfeld}
H^0\Bigl(\pretr\bigl(\pretr(\aug{\cA})\bigr)\Bigr)/H^0\bigl((\pretr(\cC)\bigr)\to H^0\Bigl(\pretr\bigl(\pretr(\aug{\cA})/\cC\bigr)\Bigr)
\end{equation}
is a triangulated equivalence. Since the natural dg functor $\pretr(\aug{\cA})\mono\pretr\bigl(\pretr(\aug{\cA})\bigr)$ is a quasi-equivalence, the left-hand-side of \eqref{eq:Drinfeld} can be identified with the Verdier quotient $\cT/\cS$, where $\cT:=H^0\bigl(\pretr(\aug{\cA})\bigr)$ and $\cS$ denotes the smallest strictly full triangulated subcategory of $\cT$ containing the objects of $\cC$. Note that $\cS$ is the smallest strictly full triangulated subcategory of $\cT$ containing the cones (in $\cT$) $C_A$ of $\id_A$, for every $A\in\cA$. Here $\id_A$ is the identity of $A$ in $H^0(\cA)$, while $1_A$ will be the identity of $A$ in $H^0(\cA)^+$. It should then be clear that $H^*(\nudge[\cA])$ can be identified with the natural non-unital graded functor
\[
\fnc\colon H^*(\cA)\mono\aug{H^*(\cA)}\mono\grc\cT\to\grc{(\cT/\cS)},
\]
where $\grc\cT$ is the graded category with the same objects as $\cT$ and $\grc{\cT}(X,Y)^n:=\cT(X,\sh[n]{Y})$ for all $X,Y\in\cT$ and all $n\in\ZZ$ (hence $\grc\cT$ can be identified with $H^*\bigl(\pretr(\aug{\cA})\bigr)$), and similarly for $\grc{(\cT/\cS)}$. We have to show that $\fnc$ is fully faithful, and to this purpose, up to passing to the idempotent completion, we can assume $\cT$ to be idempotent complete.

Under this assumption, for every $A\in\cA$, the idempotent morphism $\id_A$ splits in $\cT$. Therefore we can write $A=X_A\oplus Y_A$ for suitable $X_A,Y_A\in\cT$, and $\id_A$ (respectively $1_A-\id_A$) gets identified with $A\epi X_A\mono A$ (respectively $A\epi Y_A\mono A$). Note that this implies $C_A\iso Y_A\oplus\sh[1]{Y_A}$.

On the other hand, using the fact that the composition of two morphisms in $\aug{H^*(\cA)}$ is trivial if one of the two is in $H^*(\cA)$ and the other one is of the form $1_A-\id_A$, it is easy to see that, for every $A,A'\in\cA$, in the decomposition
\[
\aug{H^*(\cA)}(A,A')=\grc\cT(A,A')=\grc\cT(X_A,X_{A'})\oplus\grc\cT(X_A,Y_{A'})\oplus\grc\cT(Y_A,X_{A'})\oplus\grc\cT(Y_A,Y_{A'})
\]
the first summand gets identified with $H^*(\cA)(A,A')$, the second and the third one are always $0$, and the fourth one is a copy of $\K$ in degree $0$ (with generator corresponding to $1_A-\id_A$) if $A=A'$ and $0$ otherwise.

Furthermore $\grc\cT(X_A,X_{A'})\to\grc{(\cT/\cS)}(X_A,X_{A'})$ is an isomorphism, thanks to the implication III) $\Rightarrow$ v) in \cite[Chapt.\ II, Prop. 2.3.3a)]{Ve} (which applies since $\cS$ is orthogonal to all the objects of the form $X_A$). To conclude, it is then enough to note that $\grc{(\cT/\cS)}(A,A')=\grc{(\cT/\cS)}(X_A,X_{A'})$ because $Y_A$ and $Y_{A'}$ are direct summands of objects in $\cS$, and thus they are trivial in $\cT/\cS$. 
\end{proof}

Observe that, as a consequence of \autoref{nudgqiso}, $\nudg[\cA]$ is a unital dg functor (see \autoref{unithtpiso}). It is then very easy to deduce that $\fudg$ extends to a functor $\fudg\colon\dgCatu\to\dgCat$ and that we obtain a natural transformation $\nudg\colon\id_{\dgCatu}\to\fdgu\comp\fudg$, whose components are all quasi-equivalences. Moreover $\fdgu\comp\fudg$, hence also $\fudg$, preserves quasi-equivalences.

On the other hand, since $\nudg[\cA]$ is not strictly unital, $\nudg$ does not define a natural transformation $\id_{\dgCat}\to\fudg\comp\fdgu$. However, we are going to see that there exist another functor $\fudga\colon\dgCat\to\dgCat$ and two natural transformations $\nudga\colon\id_{\dgCat}\to\fudga$ and $\nudgb\colon\fudg\comp\fdgu\to\fudga$, whose components are all quasi-equivalences. By \autoref{Tan} this will allow to conclude that $\Inf\fdgu$ and $\Inf\fudg$ are quasi-inverse equivalences of $\infty$-categories.

The definitions of $\fudga$ and $\nudga$ are similar to those of $\fudg$ and $\nudg$. More precisely, given $\cA\in\dgCat$ and still denoting by $\Id_\cA$ the family of all units in $\cA$, we consider the full dg subcategory $\cC'$ of $\pretr(\cA)$ consisting of the cones (in $\pretr(\cA)$) of all the morphisms in $\Id_\cA$. Then we define $\fudga(\cA)$ to be the full dg subcategory of the Drinfeld quotient $\pretr(\cA)/\cC'$ whose objects are those in the image of the natural (strictly unital) dg functor $\cA\mono\pretr(\cA)\to\pretr(\cA)/\cC'$. Thus we obtain a dg functor $\nudga[\cA]\colon\cA\to\fudga(\cA)$. Using the fact that the objects of $\cC'$ are trivial in cohomology, a much simpler argument than that in the proof of \autoref{nudgqiso} shows that $\nudga[\cA]$ is a quasi-isomorphism. It is then clear that $\fudga$ extends to a functor $\fudga\colon\dgCat\to\dgCat$ and that $\nudga\colon\id_{\dgCat}\to\fudga$ is a natural transformation, whose components are all quasi-equivalences.

Finally, for every $\cA\in\dgCat$, the unique dg functor $\aug{\cA}\to\cA$ such that $\cA\mono\aug{\cA}\to\cA$ is $\id_\cA$ extends to a dg functor $\pretr(\aug{\cA})\to\pretr(\cA)$ mapping $\cC$ to $\cC'$, whence it induces a dg functor $\nudgb[\cA]\colon\fudg(\cA)\to\fudga(\cA)$. It is easy to see, by construction, that $\nudga[\cA]=\nudgb[\cA]\comp\nudg[\cA]$. Since $\nudg[\cA]$ and $\nudga[\cA]$ are quasi-isomorphism, $\nudgb[\cA]$ is a quasi-isomorphism, as well. Clearly we conclude that $\nudgb\colon\fudg\comp\fdgu\to\fudga$ is a natural transformation, whose components are all quasi-equivalences.

\subsection{Unital vs.\  cohomologically unital $A_\infty$ categories}\label{subsec:ucu}

In this subsection we prove the last equivalence in \autoref{infequiv}, namely $\Inf\ACatu\to\Inf\ACatc$. As we already know the natural equivalences $\Inf\dgCatu\to\Inf\ACatu$ and $\Inf\dgCatc\to\Inf\ACatc$ (see \autoref{adjunit}), it is enough to prove that $\Inf\fuc\colon\Inf\dgCatu\to\Inf\dgCatc$ is an equivalence of $\infty$-categories. Here $\fuc\colon\dgCatu\to\dgCatc$ denotes the inclusion functor, which obviously preserves quasi-equivalences.

We start by recalling that, in general, $\cA\in\ACatn$ is \emph{h-projective} if $\cA(A,A')$ is a h-projective complex\footnote{A complex in an abelian category is \emph{h-projective} if every morphism from it to an acyclic complex is null-homotopic.} of $\K$-modules for every $A,A'\in\cA$. We will need the following easy result.

\begin{lem}\label{chproju}
Let $\cA$ be a h-projective and cohomologically unital dg category. Then $\cA$ is unital.
\end{lem}

\begin{proof}
Given a cohomological unit $\unit_A$ of some $A\in\cA$, the difference $\unit_A-\unit_A\comp\unit_A$ is a coboundary. Since composition with a coboundary is always null-homotopic, it follows that both maps
\begin{equation}\label{eq:unitmult}
\farg\comp\unit_A\colon\cA(A,A')\to\cA(A,A') \qquad \unit_A\comp\farg\colon\cA(A',A)\to\cA(A',A)
\end{equation}
are idempotent endomorphisms in the homotopy category of complexes, for every $A'\in\cA$. On the other hand \eqref{eq:unitmult} induce isomorphisms (in fact the identity) in cohomology. As $\cA$ is h-projective, this implies that \eqref{eq:unitmult} are isomorphisms in the homotopy category of complexes. Taking into account that an idempotent isomorphism is necessarily the identity, we conclude that \eqref{eq:unitmult} are homotopic to the identity.
\end{proof}

Then it is enough to show that non-unital dg categories admit functorial h-projective resolutions, meaning that there exist a functor $\fcue\colon\dgCatn\to\dgCatn$ together with a natural transformation $\nuc\colon\fcue\to\id_{\dgCatn}$ such that $\fcue(\cA)$ is h-projective and $\nuc[\cA]\colon\fcue(\cA)\to\cA$ is a quasi-isomorphism, for every $\cA\in\dgCatn$. Indeed, assuming this, by \autoref{chproju} (and recalling \autoref{unithtpiso}) $\fcue$ restricts to a functor $\fcu\colon\dgCatc\to\dgCatu$ and $\nuc$ to natural transformations $\fuc\comp\fcu\to\id_{\dgCatc}$ and $\fcu\comp\fuc\to\id_{\dgCatu}$, whose components are all quasi-equivalences. This also implies that $\fcu$ preserves quasi-equivalences, and we conclude from \autoref{Tan} that $\Inf\fuc$ and $\Inf\fcu$ are quasi-inverse equivalences of $\infty$-categories.

As for the existence of functorial h-projective resolutions for non-unital dg categories, we just very briefly sketch the argument, as one can proceed similarly to the strictly unital case, for which we refer to \cite[Section 3.2]{CNS}. Indeed, in \cite{CNS} the h-projective resolution of $\cA\in\dgCat$ is obtained as the colimit of a sequence of dg categories $\cA_n$ ($n\in\NN$), all with the same objects as $\cA$. In the non-unital case we can keep the same definition of $\cA_1$ and change the inductive step from $\cA_{n-1}$ to $\cA_n$ as follows. While in \cite{CNS} $\cA_n$ is the graded $\K$-linear \emph{category} freely generated over $\cA_{n-1}$ by a suitable set of generators, in the non-unital case we consider the graded $\K$-linear \emph{non-unital category} freely generated over $\cA_{n-1}$ by the same set of generators. In \cite{Orn2} it is explained how the same result holds more generally for (non-unital) $A_\infty$ categories.

\section{$\Hqe$ and categories of functors}\label{sec:1equiv}

In this section, we show that $\Hqe$ is naturally equivalent to the categories $\QACatuhp$ and $\QACathps$, thus proving the second part of \autoref{IntHom}.% But, at the same time, this result will be used in \autoref{sec:IntHom} to prove the first part of the same result.

To clarify the notation, we set $\ACatuhp$ (respectively $\ACathps$) to be the full subcategory of $\ACatu$ (respectively $\ACat$) whose objects are h-projective (respectively h-projective and with split units, see \autoref{def:splitunit}). Such equivalences are established in \autoref{subsec:equivcounit} (see, in particular, \autoref{unitalhp}), after a good amount of technical preliminaries in \autoref{subsec:prel}.

Actually the effort to prove \autoref{unitalhp} will receive an extra reward in \autoref{sec:IntHom}, where we will use \autoref{hproj} (the key technical result of this section) to prove the first part of \autoref{IntHom}.

\subsection{Preliminary results}\label{subsec:prel}

We begin with the following result about $A_\infty$ functors which corrects a similar statement in \cite{LH}.

\begin{lem}\label{steq}
Let $\fF\colon\cA\to\cB$ be a unital $A_\infty$ functor between two strictly unital $A_\infty$ categories. If $\cA$ has split units, then $\fF$ is homotopic to a strictly unital $A_\infty$ functor.
\end{lem}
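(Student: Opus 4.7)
The plan is to modify $\fF$ by a sequence of homotopies, one for each level $n\geq 1$, so that the result becomes strictly unital. The retractions $p_A\colon\cA(A,A)^0\to\K$ provided by \eqref{eq:splitunit} give a decomposition $\cA(A,A)^0=\K\id_A\oplus\ker(p_A)$, which will be used throughout to project onto a ``reduced'' complement of the identity morphisms. Since $\fF$ is unital, we may fix for each $A\in\cA$ an element $\gamma_A\in\cB(\fF^0(A),\fF^0(A))^{-1}$ with $\m[\cB]{1}(\gamma_A)=\fF^1(\id_A)-\id_{\fF^0(A)}$.

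The goal is to construct inductively unital $A_\infty$ functors $\fF_0:=\fF,\fF_1,\fF_2,\ldots\colon\cA\to\cB$ together with homotopies $\nat_n\colon\fF_{n-1}\to\fF_n$ satisfying $\nat_n^i=0$ for $i\neq n$, such that $\fF_n^i$ satisfies the strict unitality conditions of \autoref{def:functors} for every $1\leq i\leq n$. Since $\m{1}(\nat_n)^i$ depends only on the components $\nat_n^j$ with $j\leq i$, the assumption $\nat_n^i=0$ for $i<n$ gives $\m{1}(\nat_n)^i=0$ for $i<n$; hence by \autoref{nathtp} we have $\fF_n^i=\fF_{n-1}^i$ for $i<n$, so strict unitality already achieved at lower levels is preserved at each stage, and unitality of $\fF_n$ is automatic from the homotopy.

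For the base case $n=1$ I would set $\nat_1^1(f):=-p_A(f)\gamma_A$ whenever $A=A'$ and $f\in\cA(A,A)^0$, and zero otherwise; a direct computation using $\m[\cA]{1}(\id_A)=0$ then yields $\fF_1^1(\id_A)=\id_{\fF^0(A)}$. For the inductive step $n\geq 2$, the vanishing of $\nat_n^i$ for $i\neq n$ together with the strict unitality of $\fF_{n-1}$ at all lower levels reduces the expression $\m{1}(\nat_n)^n$ to the usual differential $d\nat_n^n$ of a degree $-n$ map induced by $\m[\cA]{1}$ and $\m[\cB]{1}$. The task is then to exhibit a $\nat_n^n$ with $d\nat_n^n=-\fF_{n-1}^n$ on the subspace spanned by those tensors $f_n\otimes\cdots\otimes f_1$ having at least one identity factor (on the complementary ``reduced'' part obtained from the splittings we may prescribe $\nat_n^n$ to vanish). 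The existence of such $\nat_n^n$ should follow by extracting from the $A_\infty$ relations \eqref{eq:funrel} for $\fF_{n-1}$, applied to identity-containing tuples, an explicit nulhomotopy of the restriction of $\fF_{n-1}^n$ to the identity-having subspace; most terms of these relations vanish thanks to strict unitality of $\fF_{n-1}$ below level $n$ and strict unitality of $\cA$ and $\cB$, leaving a manageable formula. The hard part is the combinatorial bookkeeping needed to treat compatibly the various positions $j$ in which an identity morphism may appear, making sure that the choices made for distinct positions and distinct multiplicities of identities fit into a single $\K$-linear map.

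Finally, each component $\fF_n^i$ stabilizes for $n\geq i$, so one may define a non-unital $A_\infty$ functor $\fF'$ by $\fF'^i:=\fF_i^i$, and by construction $\fF'$ is strictly unital. The partial sums $\nat_1+\cdots+\nat_N$, together with iterated application of \autoref{htpeqrel}, yield a coherent family of homotopies $\fF\htp\fF_N$ whose $i$-th component equals $\nat_i^i$ for all $i\leq N$; passing to the limit one obtains $\fF\htp\fF'$, which proves the lemma.
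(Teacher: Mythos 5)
Your overall strategy coincides with the paper's: first correct $\fF^1$ on the identities using the degree-zero splitting from \eqref{eq:splitunit} (your $\nat_1^1(f)=-p_A(f)\gamma_A$ is, up to sign conventions, exactly the paper's $\theta^1(f)=p(f)h_A$), then improve strict unitality level by level via homotopies whose low components vanish, and conclude with \autoref{htpeqrel} and a stabilization argument. The problem is that the inductive step, which is the entire nontrivial content of the lemma, is not actually carried out: you write that a suitable $\nat_n^n$ ``should follow'' from the relations \eqref{eq:funrel} and that ``the hard part is the combinatorial bookkeeping,'' but that bookkeeping is precisely what has to be done, and your structural ansatz is not the one that the computation supports. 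In the paper's proof the correction is organized as a finer double induction, over the level $n$ \emph{and} the position $m$ at which an identity occurs, and the correcting homotopy at stage $(n,m)$ is written down explicitly and is \emph{not} concentrated in a single degree: it has two nonzero components, $\nat^{n-1}(f_{n-1}\otimes\cdots\otimes f_1)=(-1)^m\fF^n(f_{n-1}\otimes\cdots\otimes f_m\otimes\id_A\otimes f_{m-1}\otimes\cdots\otimes f_1)$ and $\nat^n(f_n\otimes\cdots\otimes f_1)=(-1)^m\fF^{n+1}(f_n\otimes\cdots\otimes f_m\otimes\id_A\otimes f_{m-1}\otimes\cdots\otimes f_1)$. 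The degree-$(n-1)$ component is needed so that the terms of $\m{1}(\nat)^n$ involving $\m[\cA]{2}$, $\m[\cB]{2}$, etc.\ cancel against the terms of the $A_\infty$ relation for $\fF$ at level $n+1$ evaluated on identity-inserted tuples (while one also checks that $\fG^{n-1}=\fF^{n-1}$ survives despite $\nat^{n-1}\neq0$); with a homotopy supported only in degree $n$, as you propose, the equation $d\nat_n^n=-\fF_{n-1}^n$ on the identity-containing tensors is exactly the statement you would still have to prove, and you give neither a formula for $\nat_n^n$ nor a reason why such a primitive exists.

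Two secondary cautions. First, handling ``all positions $j$ at once'' at level $n$ is exactly the compatibility issue you flag but do not resolve; the paper avoids it by correcting one position $m$ at a time and using the refinement of \autoref{htpeqrel} to compose the resulting homotopies without disturbing what has already been achieved. Second, your use of a ``reduced complement'' should be handled with care: \eqref{eq:splitunit} only provides a splitting of $\K\id_A\mono\cA(A,A)^0$ as graded $\K$-modules, so the complement is not a subcomplex, and any argument that prescribes $\nat_n^n$ to vanish there must check that this is consistent with the terms of $\m{1}(\nat_n)^n$ involving $\m[\cA]{1}$; this is manageable, but it is another verification your sketch omits.
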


\begin{proof}
Since $\fF$ is unital, for every $A\in\cA$ there exists $h_A\in\cB(\fF^0(A),\fF^0(A))^{-1}$ such that $\fF^1(\id_A)=\id_{\fF^0(A)}-\m[\cB]{1}(h_A)$. Then we define a prenatural transformation $\nat\colon\fF\to\fF$ of degree $0$ as follows: for every $f\in\cA(A,A')$ we set
\[
\theta^1(f):=
\begin{cases}
p(f)h_A & \text{if $A=A'$} \\
0 & \text{if $A\ne A'$}
\end{cases}
\]
(where $p\colon\cA(A,A)\to\K$ is as in the proof of \autoref{Vsplit}) and $\nat^i:=0$ for $i\ne1$. By \autoref{nathtp} we can find $\widetilde\fF\in\ACatn(\cA,\cB)$ such that $\fF\htp\widetilde\fF$ and $\widetilde\fF^i=\fF^i+\m{1}(\nat)^i$ for $i>0$. By definition for every $A\in\cA$ we have
\[
\widetilde\fF^1(\id_A)=\fF^1(\id_A)+\m{1}(\nat)^1(\id_A)=\id_{\fF^0(A)}-\m[\cB]{1}(h_A)+\nat^1\bigl(\m[\cA]{1}(\id_A)\bigr)+\m[\cB]{1}\bigl(\nat^1(\id_A)\bigr)=\id_{\fF^0(A)}.
\]
To conclude, using \autoref{htpeqrel} and an easy recursive argument, it should be clear that it is enough to prove the following statement. Assume that $\fF^1(\id_A)=\id_{\fF^0(A)}$ for every $A\in\cA$ and that there exist $n>1$ and $1\le m\le n$ such that $\fF^i(f_i\otimes\cdots\otimes f_1)=0$ if there exists $j\in\{1,\dots,i\}$ such that $f_j=\id_A$ (for some $A\in\cA$) and either $1<i<n$ or $i=n$ and $j<m$. Then we can find $\fG\in\ACatn(\cA,\cB)$ such that $\fF\htp\fG$ through a homotopy $\nat$ with $\nat^i=0$ for $i<n-1$, $\fG^i=\fF^i$ for $i<n$ and $\fG^n(f_n\otimes\cdots\otimes f_1)=0$ if there exists $j\in\{1,\dots,n\}$ such that $f_j=\id_A$ (for some $A\in\cA$) and $j\le m$.

To this aim, a direct but tedious check shows that we can define $\nat$ by
\begin{gather*}
\nat^{n-1}(f_{n-1}\otimes\cdots\otimes f_1):=(-1)^m\fF^n(f_{n-1}\otimes\cdots\otimes f_m\otimes\id_A\otimes f_{m-1}\otimes\cdots\otimes f_1) \\
\nat^n(f_n\otimes\cdots\otimes f_1):=(-1)^m\fF^{n+1}(f_n\otimes\cdots\otimes f_m\otimes\id_A\otimes f_{m-1}\otimes\cdots\otimes f_1)
\end{gather*}
and $\nat^i:=0$ for $i\ne n-1,n$. See also \cite[Lemma 3.7]{Orn} (where the assumption that the category has split units is erroneously missing) for more details of the computation.
\end{proof}

The following is then a straightforward consequence using \autoref{hisocrit} (1).

\begin{cor}\label{cor:split}
If $\cA,\cB\in\ACat$ and $\cA$ has split units, then the natural injective map
\[
\QACat(\cA,\cB)\to\QACatu(\cA,\cB)
\]
is also surjective.
\end{cor}

With an argument similar to the one in the proof of \autoref{steq}, we get the following result about natural transformations.

\begin{lem}\label{striunit}
Let $\fF,\fG\in\ACat(\cA,\cB)$ and let $\nat\colon\fF\to\fG$ be a natural transformation of degree $p$. Then there exists a prenatural transformation $\enat\colon\fF\to\fG$ of degree $p-1$ such that $\nat-\m{1}(\enat)\colon\fF\to\fG$ is a strictly unital natural transformation.
\end{lem}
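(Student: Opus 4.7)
The plan is to adapt the recursive scheme in the proof of \autoref{steq} to prenatural transformations. A conceptual difference from \autoref{steq} is that no splitting hypothesis analogous to \eqref{eq:splitunit} is needed here, because the obstructions to strict unitality of a natural transformation are automatically coboundaries supplied by its higher-arity components through the naturality relation $\m{1}(\nat)=0$. For example, expanding \eqref{eq:natrel} for $n=2$ on the pair $(\id_A,\id_A)$ and using strict unitality of $\cA,\cB,\fF,\fG$ collapses everything to $\nat^1(\id_A)=(-1)^p\,\m[\cB]{1}(\nat^2(\id_A\otimes\id_A))$, so that $\nat^1(\id_A)$ is a coboundary; analogous identities at higher arities supply the material for the subsequent corrections.

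I arrange the pairs $(n,m)$ with $n\ge 1$ and $1\le m\le n$ in lexicographic order and inductively kill, one at a time, the failures of the current $\nat$ to vanish on tuples containing $\id_A$ at position $m$ in arity $n$. At step $(n,m)$ the correction $\enat_{n,m}\colon\fF\to\fG$ is a prenatural transformation of degree $p-1$ concentrated in a single low arity, essentially given by ``insert $\id_A$ at position $m$ and apply the current $\nat^n$''; concretely, for $n\ge 2$ one takes $\enat_{n,m}^i=0$ for $i\ne n-1$ and
\[
\enat_{n,m}^{n-1}(g_{n-1}\otimes\cdots\otimes g_1):=(-1)^{m+\alpha}\,\nat^n(g_{n-1}\otimes\cdots\otimes g_m\otimes\id_A\otimes g_{m-1}\otimes\cdots\otimes g_1),
\]
with $A$ determined by composability and $\alpha$ a sign depending on $p$ and on the degrees of the $g_i$'s. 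Three checks are required: (i) $\m{1}(\enat_{n,m})^{n'}=0$ for $n'<n-1$, immediate from \eqref{eq:natrel} since $\enat_{n,m}^j=0$ for $j\ne n-1$; (ii) $\m{1}(\enat_{n,m})^n$ at a tuple with $\id_A$ at position $m$ equals $\nat^n$ of that tuple up to the chosen sign, which follows because strict unitality of $\cA,\cB,\fF,\fG$ collapses \eqref{eq:natrel} to a short list of surviving terms ($\m[\cA]{2}$ absorbing the identity on one side, $\m[\cB]{2}$ pairing $\enat_{n,m}^{n-1}$ with $\fF^1(\id)=\id$ or $\fG^1(\id)=\id$, and the $\m[\cB]{1}$-differential), and a sign count reassembles them into $\nat^n$ of the tuple; (iii) the vanishing at lex pairs below $(n,m)$ is preserved, which is the most delicate check and exploits $\m{1}(\nat)=0$ on tuples with several identities. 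Setting $\enat:=\sum_{(n,m)}\enat_{n,m}$ then gives a well-defined prenatural transformation (each arity receives only finitely many contributions), with $\nat-\m{1}(\enat)$ strictly unital.

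The main obstacle is fixing the precise sign $\alpha$ and, where necessary, augmenting the support of $\enat_{n,m}$ across adjacent arities so that both (ii) and (iii) hold without spurious cancellation. The case $n=1$ illustrates the subtlety: a direct expansion using strict unitality shows that all $\enat^0$-contributions to $\m{1}(\enat)^1(\id_A)$ cancel, so $\nat^1(\id_A)$ cannot be killed by an arity-$0$ correction alone but must instead be absorbed by $\enat^1$ through the coboundary identity derived above. The overall sign bookkeeping and the coordination of corrections across arities follow the pattern of the analogous computation in \cite[Lemma 3.7]{Orn} transferred from the $A_\infty$-functor setting to the present prenatural-transformation setting.
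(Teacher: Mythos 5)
Your proposal follows the paper's route: an induction over pairs $(n,m)$ in which the failure of strict unitality at arity $n$, position $m$, is corrected by a prenatural transformation obtained by inserting $\id_A$ at position $m$ into a higher component of $\nat$, the verification resting on strict unitality of $\cA$, $\cB$, $\fF$, $\fG$ together with the naturality relation $\m{1}(\nat)=0$, with the sign bookkeeping delegated to Ornaghi's thesis; your observations that no analogue of \eqref{eq:splitunit} is needed and that the arity-$0$ contributions to $\m{1}(\enat)^1(\id_A)$ cancel are both consistent with the paper's argument.

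The one substantive correction: your default ansatz, with $\enat_{n,m}$ supported in arity $n-1$ only, does not work, and the augmentation across adjacent arities that you flag as ``where necessary'' is in fact always necessary. In the paper the correction $\inat$ at step $(n,m)$ has \emph{two} nonzero components, $\inat^{n-1}(f_{n-1}\otimes\cdots\otimes f_1)=(-1)^m\nat^{n}(f_{n-1}\otimes\cdots\otimes f_m\otimes\id_A\otimes f_{m-1}\otimes\cdots\otimes f_1)$ and $\inat^{n}(f_{n}\otimes\cdots\otimes f_1)=(-1)^m\nat^{n+1}(f_{n}\otimes\cdots\otimes f_m\otimes\id_A\otimes f_{m-1}\otimes\cdots\otimes f_1)$. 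With support only in arity $n-1$, the contributions of $\enat_{n,m}^{n-1}$ to $\m{1}(\enat_{n,m})^{n}$ on a tuple with $\id_A$ at position $m$ reproduce $\nat^{n}$ of that tuple \emph{twice} (once through $\m[\cA]{2}$ absorbing the inserted unit, once through the adjacent $\m[\cA]{2}$ or the boundary terms $\m[\cB]{2}(-\otimes\fF^1(\id_A))$, $\m[\cB]{2}(\fG^1(\id_A)\otimes-)$), exactly as in the arity-$0$ cancellation you noted at $n=1$; depending on the signs in \eqref{eq:natrel} these two copies either cancel or give $2\nat^{n}$, and in neither case do you obtain the single copy required by your step (ii) over an arbitrary ring (you cannot divide by $2$), besides which terms such as $\m[\cB]{2}\bigl(\fG^1(f_n)\otimes\enat_{n,m}^{n-1}(\cdots)\bigr)$ survive uncontrolled. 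Note also that your own list of surviving terms in (ii) includes ``the $\m[\cB]{1}$-differential'', which is only present if $\enat_{n,m}^{n}\ne0$. The missing ingredient is precisely the naturality relation for $\nat$ at arity $n+1$ evaluated on the tuple with $\id_A$ inserted at position $m$ (the higher-arity analogue of the coboundary identity you wrote for $n=1$), and it is brought into play by the $\m[\cB]{1}\comp\inat^{n}$ and $\inat^{n}\comp(\cdots\otimes\m[\cA]{1}\otimes\cdots)$ terms of the arity-$n$ component; with that two-component formula (uniform in $n\ge1$, so no separate treatment of $n=1$ is needed) your scheme coincides with the proof in the paper, which then proceeds exactly as in \autoref{steq} via \autoref{htpeqrel}-style recursion and refers to \cite[Lemma 3.8]{Orn} for the computation.
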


\begin{proof}
The argument is similar (and a bit simpler) to the one of \autoref{steq}. In this case the only key step consists in the proof of the following statement. Assume that there exist $n>0$ and $1\le m\le n$ such that $\nat^i(f_i\otimes\cdots\otimes f_1)=0$ if there exists $j\in\{1,\dots,i\}$ such that $f_j=\id_A$ (for some $A\in\cA$) and either $0<i<n$ or $i=n$ and $j<m$. Then we can find a prenatural transformation $\inat\colon\fF\to\fG$ of degree $p-1$ such that $\inat^i=0$ for $i<n-1$, $\m{1}(\inat)^i=0$ for $i<n$ and $\m{1}(\inat)^n(f_n\otimes\cdots\otimes f_1)=\nat^n(f_n\otimes\cdots\otimes f_1)$ if there exists $j\in\{1,\dots,n\}$ such that $f_j=\id_A$ (for some $A\in\cA$) and $j\le m$. Here we can define $\inat$ by
\begin{gather*}
\inat^{n-1}(f_{n-1}\otimes\cdots\otimes f_1):=(-1)^m\nat^n(f_{n-1}\otimes\cdots\otimes f_m\otimes\id_A\otimes f_{m-1}\otimes\cdots\otimes f_1) \\
\inat^n(f_n\otimes\cdots\otimes f_1):=(-1)^m\nat^{n+1}(f_n\otimes\cdots\otimes f_m\otimes\id_A\otimes f_{m-1}\otimes\cdots\otimes f_1)
\end{gather*}
and $\inat^i:=0$ for $i\ne n-1,n$. See also \cite[Lemma 3.8]{Orn} for more details.
\end{proof}

We can then prove the following.

\begin{lem}\label{Genovese}
If $\fF,\fF'\in\ACatdg(\cA,\cB)$ are such that $\fF\hiso\fF'$, then $\fF$ and $\fF'$ have the same image in $\HoACatdg$.
\end{lem}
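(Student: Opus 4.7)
The plan is to pass to the homotopy category of dg categories, where a classical identification takes over. By \autoref{ACatdg} the inclusion $\ACatdg\hookrightarrow\ACat$ induces an equivalence $\HoACatdg\iso\HoACat$, so it suffices to show that $\fF$ and $\fF'$ have the same image in $\HoACat$. Combining with the equivalence $\Ho{\VdB}\colon\HoACat\iso\Hqe$ of \autoref{dgAequiv}, the problem reduces to proving that $\VdB(\fF)=\VdB(\fF')$ in $\Hqe(\VdB(\cA),\VdB(\cB))$.

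Next, I would translate the weak equivalence $\fF\hiso\fF'$ into the corresponding structure between the dg functors $\VdB(\fF)$ and $\VdB(\fF')$. The hypothesis yields a closed degree-$0$ natural transformation $\nat\colon\fF\to\fF'$ in $\ACatn$ with $H(\nat)$ an isomorphism; by \autoref{striunit} I may assume that $\nat$ is strictly unital. Via the bar--cobar formalism of \autoref{subsec:barcobar}, $\nat$ corresponds to a closed $(\Bi(\fF),\Bi(\fF'))$-coderivation $\ext{\nat}$ of degree $-1$, and applying the cobar functor produces a closed degree-$0$ dg natural transformation from $\VdB(\fF)$ to $\VdB(\fF')$. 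Using that $\ncbi[\cA]^1$ and $\ncbi[\cB]^1$ are homotopy equivalences on morphism complexes (\autoref{dgAnadj}) together with the assumption that $H(\nat)$ is an isomorphism, one checks that this dg natural transformation is componentwise a quasi-isomorphism.

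I would then conclude by invoking the classical fact that in $\Hqe$ two dg functors between dg categories linked by a closed degree-$0$ dg natural transformation with componentwise quasi-isomorphic components have the same image. This is a standard consequence of Tabuada's model structure on $\dgCat$ via a path-object argument, or equivalently of the bimodule description of $\Hqe(\cA,\cB)$ in \cite{To}.

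The main obstacle will be the verification in the second step, namely that the derivation obtained from $\ext{\nat}$ under cobar truly defines a dg natural transformation whose components are quasi-isomorphisms. The key technical points are the strict unitality afforded by \autoref{striunit} (which ensures compatibility with the unit structures on $\VdB(\cA)$ and $\VdB(\cB)$) and the analysis of how the components of the resulting dg natural transformation correspond, modulo the homotopy equivalences of \autoref{dgAnadj}, to the closed elements $\nat^0_A\in\cB(\fF^0(A),\fF'^0(A))^0$ representing the isomorphism $H(\nat)$.
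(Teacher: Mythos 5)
Your opening reduction is legitimate: \autoref{dgAequiv} and \autoref{ACatdg} are established before this lemma, and together they do show that the inclusion-induced functor $\HoACatdg\to\HoACat$ is an equivalence, so it suffices to prove $\VdB(\fF)=\VdB(\fF')$ in $\Hqe$. The gap is in your central step. The cobar functor of \autoref{subsec:barcobar} is defined on dg cofunctors only; its natural extension to coderivations turns the closed degree $-1$ coderivation $\ext{\nat}$ into a degree $-1$ \emph{derivation}, not into a closed degree-$0$ (strict) dg natural transformation $\VdB(\fF)\to\VdB(\fF')$, and no such transformation comes out of this formalism. Concretely, the only candidate components are the classes of the one-bracket elements $\sh[-1]{(\sh{\nat^0_A})}$, and strict naturality already fails against a length-one generator $\sh[-1]{(\sh{f})}$ with $f\in\cA(A,A')$: it would require $\sh[-1]{(\sh{\nat^0_{A'}})}\otimes\sh[-1]{(\sh{\fF^1(f)})}=\sh[-1]{(\sh{{\fF'}^1(f)})}\otimes\sh[-1]{(\sh{\nat^0_A})}$ in $\VdB(\cB)$, which is (essentially) freely generated as a graded category, so these two words are simply distinct; the $A_\infty$ naturality relation \eqref{eq:natrel} only gives such an identity up to the higher components $\nat^i$, $i\ge1$, and those cannot be absorbed into object-indexed components of a strict transformation. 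Hence the input needed for the ``classical fact'' you invoke at the end (which, incidentally, requires the components to become isomorphisms in $H^0(\VdB(\cB))$ --- they are morphisms of $\VdB(\cB)$, so ``componentwise quasi-isomorphism'' is not the right condition) is never produced, and strict unitality of $\nat$ via \autoref{striunit} does not repair this.

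For comparison, the paper never transports $\nat$ along $\VdB$: its proof is a citation of \cite[Lemma 2.10]{COS}, whose argument works entirely inside $\ACatdg$ with the strictly unital natural transformation $\fF\to\fF'$ used directly (a path-object style construction over $\cB$), and the only modification needed over an arbitrary commutative ring is that the strictly unital natural transformation is now supplied by \autoref{striunit} instead of the field-based strictification results of \cite{LH} and \cite{Sei}. If you wish to keep your reduction to $\Hqe$, you would still need an argument of that kind (or Toën's bimodule description applied directly to $\fF$ and $\fF'$); the strictification of an $A_\infty$ natural transformation into a dg natural transformation on which your plan hinges is not available.
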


\begin{proof}
This is \cite[Lemma 2.10]{COS}. The only point of the proof that must be modified is the existence of a suitable strictly unital natural transformation $\fF\to\fF'$, for which we invoke \autoref{striunit}.
\end{proof}

In the following we will need to use the fact that $\dgCat$ admits a model structure, where the weak equivalences are the quasi-equivalences and the fibrations are the full dg functors whose $H^0$ is an isofibration (see \cite{Ta}). Recall that, in general, if $\cC$ is a model category, $X\in\cC$ is cofibrant and $Y\in\cC$ is fibrant, then the natural map $\cC(X,Y)\to\Ho{\cC}(X,Y)$ induces a bijection
\begin{equation}\label{eq:htpm}
\xymatrix{
\cC(X,Y)/\htpm \ar@{<->}[r]^-{1:1} & \Ho{\cC}(X,Y)
}
\end{equation}
(see \cite[Theorem 1.2.10]{Hov}), where the equivalence relation $\htpm$ on $\cC(X,Y)$ can be defined as follows.\footnote{Usually this equivalence relation is called homotopy and is denoted by $\sim$, but we will not do that, in order to avoid confusion with the already defined notion of homotopy for $A_\infty$ functors.} First a \emph{cylinder object} for $X$ is given by morphisms $i_0,i_1\colon X\to X'$ and a weak equivalence $s\colon X'\to X$ such that $s\comp i_0=s\comp i_1=\id_X$ and $(i_0,i_1)\colon X\coprod X\to X'$ is a cofibration. Then, given $f_0,f_1\in\cC(X,Y)$, we have $f_0\htpm f_1$ if and only if there exist a cylinder object $(X',i_0,i_1,s)$ for $X$ and $h\in\cC(X',Y)$ such that $f_k=h\comp i_k$, for $k=0,1$ (see \cite[Definition 1.2.4 and Corollary 1.2.6]{Hov}). Moreover, if $f\in\cC(X,Y)$ is a weak equivalence between two fibrant and cofibrant objects, then (always by \cite[Theorem 1.2.10]{Hov}) there exists $g\in\cC(Y,X)$ such that $g\comp f\htpm\id_X$ and $f\comp g\htpm\id_Y$.

\begin{remark}\label{composition}
One can easily see that, by construction, the bijection in \eqref{eq:htpm} is indeed natural with respect to pre and post composition, if one restricts to fibrant and cofibrant objects of $\cC$.
\end{remark}

\begin{remark}\label{cylcof}
If $(X',i_0,i_1,s)$ is a cylinder object for a cofibrant object $X$, then $X'$ is cofibrant, as well: this follows immediately from the fact that cofibrations are stable under composition and pushouts (see \cite[Corollary 1.1.11]{Hov}).
\end{remark}

\begin{remark}\label{cofhp}
It is clear from the definition that every dg category is fibrant. On the other hand, if $\cA\in\dgCat$ is cofibrant, then $\cA$ is also h-projective. Indeed, for every $A,B\in\cA$ the complex of $\K$-modules $\cA(A,B)$ is cofibrant by \cite[Proposition 2.3]{To}, hence h-projective by \cite[Lemma 2.3.8]{Hov}. It follows that every dg category admits a h-projective resolution, namely a quasi-equivalence from a h-projective dg category.
Actually, we can be more precise as, by \cite[Lemma B.5]{Dr}, every $\cA\in\dgCat$ has a semi-free resolution, which is cofibrant by \cite[Lemma B.6]{Dr} (hence h-projective) and has split units.
\end{remark}

\begin{remark}\label{tensorhp}
If $\cA$ and $\cB$ are h-projective dg-categories, then so is $\cA\otimes\cB$.
\end{remark}

\begin{lem}\label{htpmhiso}
If $\fF_0,\fF_1\in\dgCat(\cA,\cB)$ are such that $\cA$ is cofibrant and $\fF_0\htpm\fF_1$, then $\fF_0\hiso\fF_1$.
\end{lem}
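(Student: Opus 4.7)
The plan is to exploit the cylinder data together with \autoref{hequiv} to realize $i_0$ and $i_1$ as two sections of a common {\htpequiv}, from which one transfers $\hiso$ all the way up to $\fF_0$ and $\fF_1$ by post-composition with the cylinder map.

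First I would unpack the definition of $\htpm$: choose a cylinder object $(\cA',i_0,i_1,s)$ for $\cA$ together with a dg functor $\fH\colon\cA'\to\cB$ satisfying $\fF_k=\fH\comp i_k$ for $k=0,1$. Since $s$ is a weak equivalence, i.e.\ a quasi-equivalence, the equalities $s\comp i_k=\id_\cA$ and the $2$-out-of-$3$ property force $i_0$ and $i_1$ to be quasi-equivalences as well. By \autoref{cylcof} the cylinder $\cA'$ is cofibrant, and then \autoref{cofhp} tells us that both $\cA$ and $\cA'$ are h-projective.

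Next I would upgrade $s$ from a quasi-equivalence to a {\htpequiv}: the maps $s^1\colon\cA'(X,Y)\to\cA(s(X),s(Y))$ are quasi-isomorphisms between h-projective complexes of $\K$-modules, and hence automatically homotopy equivalences. Applying \autoref{hequiv} produces a unital $A_\infty$ functor $\fG\colon\cA\to\cA'$ with $\fG\comp s\hiso\id_{\cA'}$. Pre-composing this equivalence with $i_k$ and using $s\comp i_k=\id_\cA$ together with the compatibility of $\hiso$ with composition yields
\[
\fG=\fG\comp s\comp i_k\hiso\id_{\cA'}\comp i_k=i_k
\]
for $k=0,1$, so that $i_0\hiso i_1$ in $\QACatu(\cA,\cA')$ by transitivity. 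Post-composing with the (strictly unital, hence unital) $A_\infty$ functor $\fH$ gives $\fF_0=\fH\comp i_0\hiso\fH\comp i_1=\fF_1$ in $\QACatu(\cA,\cB)$, and since $\FunAu(\cA,\cB)$ is a full $A_\infty$ subcategory of $\FunAn(\cA,\cB)$, this is precisely the desired relation $\fF_0\hiso\fF_1$.

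The main subtlety is the passage from quasi-equivalence to {\htpequiv} for $s$: \autoref{hequiv}, which supplies the unital quasi-inverse $\fG$ that drives the whole argument, is stated only for homotopy equivalences, not for arbitrary quasi-equivalences. This is precisely where the cofibrancy hypothesis on $\cA$ is used in an essential way, since it propagates through \autoref{cylcof} to the cylinder $\cA'$ and then through \autoref{cofhp} guarantees that every hom-complex in sight is h-projective, so that the quasi-isomorphism $s^1$ is automatically a chain homotopy equivalence.
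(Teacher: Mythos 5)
Your proof is correct and follows essentially the same route as the paper: unpack the cylinder object, use \autoref{cylcof} and \autoref{cofhp} to get h-projectivity of $\cA$ and $\cA'$ so that the quasi-equivalence $s$ upgrades to a {\htpequiv}, deduce $i_0\hiso i_1$, and post-compose with $\fH$. The only cosmetic difference is that at the middle step the paper simply cites part (2) of \autoref{hisocrit}, whereas you inline its proof via \autoref{hequiv} (producing the quasi-inverse $\fG$ and computing $\fG\comp s\comp i_k\hiso i_k$), which is exactly how that corollary is established in the paper.
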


\begin{proof}
By definition there exist a cylinder object $(\cA',\fI_0,\fI_1,\fS)$ for $\cA$ and $\fH\in\dgCat(\cA',\cB)$ such that $\fF_k=\fH\comp\fI_k$, for $k=0,1$. Note that both $\cA'$ (by \autoref{cylcof}) and $\cA$ are cofibrant, hence h-projective by \autoref{cofhp}. So the quasi-equivalence $\fS$ is actually a \htpequiv. Since $\fS\comp\fI_0=\fS\comp\fI_1$, part (2) of \autoref{hisocrit} implies $\fI_0\hiso\fI_1$. It follows that $\fF_0=\fH\comp\fI_0\hiso\fH\comp\fI_1=\fF_1$.
\end{proof}

\begin{lem}\label{cof}
Given $\fF\in\ACatdg(\cA,\cB)$ with $\cA$ cofibrant and with split units, there exists $\fF'\in\dgCat(\cA,\cB)$ such that $\fF\hiso\fF'$.
\end{lem}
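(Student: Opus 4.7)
The plan is to build $\fF'$ as the composition of two dg functors: the adjoint of $\fF$ supplied by \autoref{dgAadj} and a dg rectification $\fK$ of the $A_\infty$ unit $\ntV[\cA]$.

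First, by the adjunction $\VdB\dashv\fdgA$, the strictly unital $A_\infty$ functor $\fF$ corresponds to a unique dg functor $\fF^\flat\colon\VdB(\cA)\to\cB$ with $\fdgA(\fF^\flat)\comp\ntV[\cA]=\fF$. Suppose one can find a dg functor $\fK\colon\cA\to\VdB(\cA)$ such that $\fK\hiso\ntV[\cA]$ in $\ACat(\cA,\VdB(\cA))$. Since $\hiso$ is compatible with composition, the dg functor $\fF':=\fF^\flat\comp\fK$ then satisfies $\fF'\hiso\fF^\flat\comp\ntV[\cA]=\fF$, as desired.

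To construct such $\fK$, I use the Tabuada model structure on $\dgCat$. The counit $\nfV[\cA]\colon\VdB(\cA)\to\cA$ is a quasi-equivalence by \autoref{dgAadj}, hence a weak equivalence in $\dgCat$. Since $\cA$ is cofibrant and every dg category is fibrant (\autoref{cofhp}), the bijection \eqref{eq:htpm} identifies $\dgCat(\cA,-)/\htpm$ with $\Hqe(\cA,-)$, and post-composition with the isomorphism $[\nfV[\cA]]$ yields a bijection $\dgCat(\cA,\VdB(\cA))/\htpm\isomor\dgCat(\cA,\cA)/\htpm$. Choosing a preimage of $[\id_\cA]$ produces a dg functor $\fK$ with $\nfV[\cA]\comp\fK\htpm\id_\cA$.

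Finally, \autoref{htpmhiso} turns this into $\nfV[\cA]\comp\fK\hiso\id_\cA=\nfV[\cA]\comp\ntV[\cA]$ in $\ACat$, where the last equality is the triangle identity. To cancel $\nfV[\cA]$ on the left via part (2) of \autoref{hisocrit} and deduce $\fK\hiso\ntV[\cA]$, it is crucial that $\nfV[\cA]$ itself be a \htpequiv. This is precisely where the assumption \eqref{eq:splitunit} enters: by \autoref{dgAadj} it makes $\ntV[\cA]$ into a \htpiso, so each $\ntV[\cA]^1\colon\cA(A,A')\to\VdB(\cA)(A,A')$ is a homotopy equivalence whose strict left inverse $\nfV[\cA]^1$ (supplied by the triangle identity) is then automatically a two-sided homotopy inverse, making $\nfV[\cA]$ a \htpiso. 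The main obstacle is exactly this last step: without \eqref{eq:splitunit} the counit $\nfV[\cA]$ would only be a quasi-equivalence and \autoref{hisocrit}(2) could not be applied.
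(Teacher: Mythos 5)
Your argument is correct, and it draws on the same toolkit as the paper's proof (the adjunction of \autoref{dgAadj}, the Tabuada model structure via \eqref{eq:htpm}, \autoref{htpmhiso}, and a cancellation through \autoref{hisocrit}), but the route is genuinely different at two points. The paper starts from the same identity $\fF=\nfV[\cB]\comp\VdB(\fF)\comp\ntV[\cA]$ but cancels the \emph{unit} on the right using \autoref{hisocrit}(3) --- this is where \eqref{eq:splitunit} enters there, since it makes $\ntV[\cA]$ a {\htpequiv} --- obtaining $\fF\comp\nfV[\cA]\hiso\nfV[\cB]\comp\VdB(\fF)$; it then chooses a cofibrant replacement $\fS\colon\cC\to\VdB(\cA)$ and a model-categorical homotopy inverse $\fG$ of the quasi-equivalence $\nfV[\cA]\comp\fS$ between fibrant--cofibrant objects, and sets $\fF':=\nfV[\cB]\comp\VdB(\fF)\comp\fS\comp\fG$. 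You instead cancel the \emph{counit} on the left via \autoref{hisocrit}(2), which obliges you to show that $\nfV[\cA]$ is a {\htpequiv}; your retraction argument (the strict identity $\nfV[\cA]^1\comp\ntV[\cA]^1=\id$ from the triangle identity, combined with $\ntV[\cA]^1$ being a homotopy equivalence under \eqref{eq:splitunit}, so that the one-sided strict inverse is automatically a two-sided homotopy inverse) does this correctly, though note it is an extra step not contained in the statement of \autoref{dgAadj}, which only asserts that $\nfV[\cB]$ is a quasi-isomorphism. What you buy in exchange is that no cofibrant replacement of $\VdB(\cA)$ is needed: lifting $[\id_\cA]$ through \eqref{eq:htpm} only requires $\cA$ cofibrant and $\VdB(\cA)$ fibrant, and since $\nfV[\cA]$ is an honest dg functor the identification $[\fK]\mapsto[\nfV[\cA]\comp\fK]$ follows from functoriality of the localization $\dgCat\to\Hqe$ (no appeal to \autoref{composition} is required). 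Both proofs ultimately produce an $\fF'$ of the same shape, yours being $\nfV[\cB]\comp\VdB(\fF)\comp\fK$ with $\fK\hiso\ntV[\cA]$, the paper's being the special case $\fK=\fS\comp\fG$.
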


\begin{proof}
By \autoref{dgAadj} the diagram in $\ACat$
\[
\xymatrix{
\cA \ar[d]_-{\fF} \ar[rr]^-{\ntV[\cA]} & & \VdB(\cA) \ar[d]_-{\VdB(\fF)} \ar[rr]^-{\nfV[\cA]} & & \cA \ar[d]^-{\fF} \\
\cB \ar[rr]_-{\ntV[\cB]} & & \VdB(\cB) \ar[rr]_-{\nfV[\cB]} & & \cB
}
\]
is such that the square on the left commutes. Instead the square on the right commutes when $\fF$ is a dg functor, but not in general. Taking into account that $\nfV[\cA]\comp\ntV[\cA]=\id_\cA$ and $\nfV[\cB]\comp\ntV[\cB]=\id_\cB$, in any case we have
\[
\fF\comp\nfV[\cA]\comp\ntV[\cA]=\fF=\nfV[\cB]\comp\ntV[\cB]\comp\fF=\nfV[\cB]\comp\VdB(\fF)\comp\ntV[\cA].
\]
Since $\ntV[\cA]$ is a \htpequiv, from part (3) of \autoref{hisocrit} we obtain
\begin{equation}\label{eq:hiso1}
\fF\comp\nfV[\cA]\hiso\nfV[\cB]\comp\VdB(\fF).
\end{equation}
Now, let $\fS\colon\cC\to\VdB(\cA)$ be a quasi-equivalence in $\dgCat$ with $\cC$ cofibrant (given, for instance, by a cofibrant replacement of $\VdB(\cA)$). Then $\nfV[\cA]\comp\fS\colon\cC\to\cA$ is a quasi-equivalence in $\dgCat$ between two fibrant and cofibrant objects. Therefore there exists $\fG\in\dgCat(\cA,\cC)$ such that $\nfV[\cA]\comp\fS\comp\fG\htpm\id_\cA$. By \autoref{htpmhiso} this implies
\begin{equation}\label{eq:hiso2}
\nfV[\cA]\comp\fS\comp\fG\hiso\id_\cA.
\end{equation}
Using \eqref{eq:hiso2} and \eqref{eq:hiso1} we obtain
\[
\fF=\fF\comp\id_\cA\hiso\fF\comp\nfV[\cA]\comp\fS\comp\fG\hiso\nfV[\cB]\comp\VdB(\fF)\comp\fS\comp\fG.
\]
To conclude, just observe that $\nfV[\cB]$, $\VdB(\fF)$, $\fS$ and $\fG$ are all dg functors, hence the same is true for their composition.
\end{proof}

\subsection{The missing equivalences}\label{subsec:equivcounit}

The following is the crucial technical result of this section. It will also play an important role in the description of the internal Homs in \autoref{sec:IntHom}.

\begin{prop}\label{hproj}
For every $\cA,\cB\in\dgCat$ with $\cA$ h-projective there is a natural bijection
\[
\xymatrix{
\QACatu(\cA,\cB) \ar@{<->}[r]^-{1:1} & \Hqe(\cA,\cB).
}
\]
\end{prop}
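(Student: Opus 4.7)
The plan is to construct a natural map $\dgCat(\cA,\cB)\to\QACatu(\cA,\cB)$ sending a dg functor $\fF$ to its $\hiso$-class (viewed as a unital $A_\infty$ functor), and then to show it factors through a bijection $\Hqe(\cA,\cB)\iso\QACatu(\cA,\cB)$.

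First I would reduce to the case where $\cA$ is cofibrant and satisfies \eqref{eq:splitunit}. Pick a cofibrant replacement $\fE\colon\cA'\to\cA$ in Tabuada's model structure. By \autoref{cofhp}, $\cA'$ is h-projective; since $\cA$ is h-projective by assumption, the quasi-equivalence $\fE$ is a \htpequiv{} (any quasi-isomorphism between h-projective complexes is a homotopy equivalence). Precomposition with $\fE$ yields a commutative square
\[
\xymatrix{
\Hqe(\cA,\cB)\ar[r]\ar[d] & \QACatu(\cA,\cB)\ar[d] \\
\Hqe(\cA',\cB)\ar[r] & \QACatu(\cA',\cB)
}
\]
whose vertical arrows are bijections: the left one because $\fE$ is invertible in $\Hqe$, the right one by \autoref{hequiv}, which makes $\fE$ invertible in $\QACatu$. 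Hence it suffices to prove the proposition for $\cA'$, and I may also arrange that $\cA'$ satisfies \eqref{eq:splitunit}.

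Assuming now $\cA$ is cofibrant and satisfies \eqref{eq:splitunit}, consider $\Psi\colon\dgCat(\cA,\cB)\to\QACatu(\cA,\cB)$, $\fF\mapsto[\fF]$. Since $\cA$ is cofibrant and every dg category is fibrant, \eqref{eq:htpm} gives $\Hqe(\cA,\cB)=\dgCat(\cA,\cB)/\htpm$; moreover $\fF\htpm\fF'$ implies $\fF\hiso\fF'$ by \autoref{htpmhiso}, so $\Psi$ descends to a well-defined $\overline\Psi\colon\Hqe(\cA,\cB)\to\QACatu(\cA,\cB)$. For surjectivity: given $\fF\in\ACatu(\cA,\cB)$, \autoref{steq} produces a strictly unital $\fG\htp\fF$, and part (1) of \autoref{hisocrit} gives $\fF\hiso\fG$; now $\fG\in\ACatdg(\cA,\cB)$, so \autoref{cof} yields $\fF'\in\dgCat(\cA,\cB)$ with $\fG\hiso\fF'$, whence $[\fF]=\overline\Psi([\fF'])$. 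For injectivity: if $\fF,\fF'\in\dgCat(\cA,\cB)$ satisfy $\fF\hiso\fF'$, then \autoref{Genovese} shows they coincide in $\HoACatdg$, which is equivalent to $\Hqe$ by \autoref{ACatdg}, so they represent the same class in $\Hqe(\cA,\cB)$. Naturality in $\cA$ and $\cB$ is immediate from the construction.

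The main technical obstacle is ensuring that the cofibrant replacement $\cA'$ can be chosen to satisfy \eqref{eq:splitunit}, which is required to invoke \autoref{steq} and \autoref{cof}. One expects this from an analysis of Tabuada's generating cofibrations: starting from $\emptyset$ and forming transfinite pushouts along the standard cell attachments, the unit inclusions $\K\id_A\mono\cA'(A,A)^0$ remain canonically split at each step, and the property is stable under retracts. The safer route, if a direct argument is awkward, is to bypass the general model-theoretic cofibrant replacement and construct a specific semi-free resolution $\cA'$ of $\cA$ by hand that manifestly satisfies \eqref{eq:splitunit} and is a \htpequiv{} to $\cA$ precisely because $\cA$ was assumed h-projective.
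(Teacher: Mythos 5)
Your argument is essentially the paper's own proof: reduce to a cofibrant source, identify $\Hqe(\cA,\cB)$ with $\dgCat(\cA,\cB)/\htpm$ via \eqref{eq:htpm}, get well-definedness of the comparison map from \autoref{htpmhiso}, injectivity from \autoref{Genovese} together with \autoref{ACatdg}, and surjectivity from \autoref{steq} followed by \autoref{cof}. The only point you leave open is the one you flag yourself, namely that the resolution can be chosen to satisfy \eqref{eq:splitunit}, and the paper settles it exactly along your ``safer route'': instead of an abstract cofibrant replacement it takes a semi-free resolution $\tilde{\cA}\to\cA$ as in \cite[Lemma B.5]{Dr}; semi-free dg categories are cofibrant by \cite[Lemma B.6]{Dr}, hence h-projective by \autoref{cofhp}, and they satisfy \eqref{eq:splitunit} by construction, since in the underlying free graded category $\id_A$ (the empty path) spans a direct summand of $\tilde{\cA}(A,A)^0$ isomorphic to $\K$. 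The resolution is then a quasi-equivalence between h-projective dg categories, hence a \htpequiv, so \autoref{hequiv} gives the bijection on the $\QACatu$ side, exactly as in your commutative square. Thus the gap is real but easily closed: replace ``generic cofibrant replacement, then arrange \eqref{eq:splitunit}'' by ``semi-free resolution''; the analysis of Tabuada's generating cofibrations you hint at is not needed (although it is true that cofibrant dg categories, being retracts of semi-free ones, inherit \eqref{eq:splitunit}).
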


\begin{proof}
First we claim that we can assume that $\cA$ is semi-free.  Indeed, by \autoref{cofhp}), there exists $\cA\iso\tilde{\cA}$ in $\Hqe$, with $\tilde{\cA}$ semi-free. Hence there is a natural bijection
\[
\xymatrix{
\Hqe(\cA,\cB) \ar@{<->}[r]^-{1:1} & \Hqe(\tilde{\cA},\cB).
}
\]
Taking into account that $\tilde{\cA}$ (by \autoref{cofhp}) and $\cA$ are h-projective, \autoref{hequiv} implies $\cA\iso\tilde{\cA}$ in $\QACatu$. Thus there is a natural bijection
\[
\xymatrix{
\QACatu(\cA,\cB) \ar@{<->}[r]^-{1:1} & \QACatu(\tilde{\cA},\cB).
}
\]
It follows that the existence of the required bijection is equivalent to the existence of a natural bijection
\[
\xymatrix{
\QACatu(\tilde{\cA},\cB) \ar@{<->}[r]^-{1:1} & \Hqe(\tilde{\cA},\cB).
}
\]
This proves the claim, and so for the rest of the proof we assume that $\cA$ is semi-free. 

By \autoref{htpmhiso} the inclusion $\dgCat(\cA,\cB)\mono\ACatu(\cA,\cB)$ induces a map
\begin{equation}\label{eq:comp}
\varphi\colon\dgCat(\cA,\cB)/\htpm{}\to\QACatu(\cA,\cB).
\end{equation}
By \eqref{eq:htpm} it is enough to prove that $\varphi$ is bijective. Indeed, given $\fF,\fF'\in\dgCat(\cA,\cB)$ such that $\fF\hiso\fF'$, by \autoref{Genovese} $\fF$ and $\fF'$ have the same image in $\HoACatdg$, hence also in $\Hqe$ (see \autoref{ACatdg}). Therefore $\fF\htpm\fF'$, again by \eqref{eq:htpm}, and this proves that $\varphi$ is injective. Finally, since $\cA$ has split units, \autoref{cor:split} shows that the natural map
\[
\QACat(\cA,\cB)\to\QACatu(\cA,\cB)
\]
is bijective. We conclude that $\varphi$ is surjective by \autoref{cof}.
\end{proof}

We can now apply the previous result to provide the equivalences in the last part of \autoref{IntHom}.%, namely those comparing $\QACatuhp$ and $\QACathps$ with the localizations.

\begin{prop}\label{unitalhp}
The categories $\Hqe$, $\QACatuhp$ and $\QACathps$ are equivalent. In particular, $\Hqe$ and $\QACatu$ are equivalent if $\K$ is a field.
\end{prop}

\begin{proof}
Let $\cC$ be the full subcategory of $\Hqe$ whose objects are h-projective, and let $\cC'$ be the full subcategory of $\QACatuhp$ whose objects are (strictly unital) dg categories. The inclusion $\cC\mono\Hqe$ is clearly an equivalence, and we claim that the same is true for the inclusion $\cC'\mono\QACatuhp$. Indeed, given $\cA\in\ACatuhp$, by \autoref{Yoneda} and \autoref{hequiv} we can find a \htpequiv{} $\fF\colon\rep\cA\to\cA$ with $\rep\cA\in\dgCat$. Denoting by $\fG\colon\cB\to\rep\cA$ a quasi-equivalence with $\cB$ h-projective, we obtain a quasi-equivalence $\fF\comp\fG\colon\cB\to\cA$ with $\cB\in\cC'$. Since both $\cA$ and $\cB$ are h-projective, $\fF\comp\fG$ is actually a \htpequiv, hence its image in $\QACatuhp$ is an isomorphism by \autoref{hequiv}. This proves the claim, and the equivalence between $\Hqe$ and $\QACatuhp$ follows from the fact that, as an easy consequence of \autoref{hproj}, $\cC$ and $\cC'$ are isomorphic categories.

To conclude, it is obviously enough to prove that the inclusion functor
\[
\QACathps\,\to\QACatuhp
\]
is an equivalence. It is clearly faithful and, by \autoref{cor:split}, it is full as well. It is also essentially surjective because, for any $\cA\in\ACatuhp$, we can find $\cB\in\ACathps$ which is isomorphic to $\cA$ in $\QACatuhp$ (for instance, arguing similarly as above, we can take $\cB$ to be a semi-free resolution of $\rep\cA$).
\end{proof}

\begin{remark}\label{suwrong}
The fully faithful inclusion functor $\QACathps\,\to\QACathp$ is not essentially surjective, even when $\K$ is a field (in which case $\QACathp\,=\QACat$). Indeed, the $0$ dg algebra is certainly in $\QACathp$, but it is not in the essential image of the functor, since there is no morphism in $\ACat$ from it to an object of $\ACathps$.
\end{remark}

\section{Internal Homs via $A_\infty$ functors}\label{sec:IntHom}

In this section we complete the proof of \autoref{IntHom} in \autoref{subsec:proofintHoms}, after introducing some preliminary results about $A_\infty$ multifunctors in \autoref{subsec:multifun}.

\subsection{$A_\infty$ multifunctors}\label{subsec:multifun}

Let us briefly recall some constructions which are carefully described in Sections 1.2 and 1.4 in \cite{COS} and which were originally introduced in \cite{BLM}. For this reason we will be concise in the presentation and we will refer to these original sources for more details.

More specifically, given $\cA_1,\ldots,\cA_n,\cA\in\ACatu$, an \emph{$A_\infty$ multifunctor} from $\cA_1,\ldots,\cA_n$ to $\cA$ is a morphism of graded quivers
\[
\fF\colon\red{\aug{\Bi(\cA_1)}\otimes\cdots\otimes\aug{\Bi(\cA_n)}}\to\sh{\cA}
\]
such that the natural extension
\[
\red{\aug{\Bi(\cA_1)}\otimes\cdots\otimes\aug{\Bi(\cA_n)}}\to\Bi(\cA)
\]
of $\fF$ as graded cofunctor commutes with the differentials. Furthermore, an $A_\infty$ multifunctor is \emph{unital} if all its restrictions are unital. The set of all unital $A_\infty$ multifunctors from $\cA_1,\ldots,\cA_n$ to $\cA$ will be denoted by $\ACatu(\cA_1,\ldots,\cA_n,\cA)$. It is also important to know that,  by \cite[Proposition 8.15]{BLM}, there is a unital $A_\infty$ category $\FunAu(\cA_1,\ldots,\cA_n,\cA)$ whose set of objects is $\ACatu(\cA_1,\ldots,\cA_n,\cA)$ (morphisms are suitably defined prenatural transformations). Note that, if $\cA$ is a dg category, then $\FunAu(\cA_1,\ldots,\cA_n,\cA)$ is a dg category as well.

\begin{prop}\label{bifunqe}
For every $\cA_1,\cA_2,\cA_3\in\ACatu$ there is an isomorphism in $\ACatu$
\[
\FunAu(\cA_1,\cA_2,\cA_3)\iso\FunAu(\cA_1,\FunAu(\cA_2,\cA_3)).
\]
\end{prop}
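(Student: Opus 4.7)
My plan is to establish the isomorphism by exhibiting explicit, mutually inverse strictly unital $A_\infty$ functors in both directions, via the standard currying/uncurrying procedure. This is essentially the content of the closedness of the symmetric monoidal structure on $A_\infty$ categories developed in \cite[Chapter 8]{BLM}, of which the cited \cite[Proposition 8.15]{BLM} is the key ingredient; here I only need the binary case with one output.

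First I would describe the isomorphism on objects. Given a unital $A_\infty$ multifunctor $\fF$, viewed as a morphism of graded quivers
\[
\fF\colon\red{\aug{\Bi(\cA_1)}\otimes\aug{\Bi(\cA_2)}}\to\sh{\cA_3}
\]
whose coextension commutes with differentials, its curried version $\widetilde{\fF}\colon\cA_1\to\FunAu(\cA_2,\cA_3)$ is defined on objects by $\widetilde{\fF}^0(A):=\fF(A,\farg)$, the unital $A_\infty$ functor $\cA_2\to\cA_3$ obtained by fixing the first argument at $A$, and on higher components by declaring $\widetilde{\fF}^i(f_i\otimes\cdots\otimes f_1)$ to be the prenatural transformation whose $j$-th component on $g_j\otimes\cdots\otimes g_1$ is the restriction of $\fF$ to the appropriate summand of $\red{\aug{\Bi(\cA_1)}\otimes\aug{\Bi(\cA_2)}}$ applied to $(f_i\otimes\cdots\otimes f_1)\otimes(g_j\otimes\cdots\otimes g_1)$. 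In the reverse direction, evaluation of a unital $A_\infty$ functor $\fG\colon\cA_1\to\FunAu(\cA_2,\cA_3)$ reads off a multifunctor $\widehat{\fG}$ whose components are obtained by unpacking the prenatural transformations $\fG^i$.

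Next I would verify that these assignments extend to strictly unital $A_\infty$ functors between $\FunAu(\cA_1,\cA_2,\cA_3)$ and $\FunAu(\cA_1,\FunAu(\cA_2,\cA_3))$, and analogously on prenatural transformations. The cleanest route is to work at the level of bar constructions and observe that the underlying currying bijection of graded $\K$-modules
\[
\Hom\bigl(\red{\aug{\Bi(\cA_1)}\otimes\aug{\Bi(\cA_2)}},\sh{\cA_3}\bigr)\iso\Hom\bigl(\red{\aug{\Bi(\cA_1)}},\sh{\FunAu(\cA_2,\cA_3)}\bigr)
\]
intertwines the differentials, precisely because the $A_\infty$ structure on $\FunAu(\cA_2,\cA_3)$ constructed in \cite[Section 8]{BLM} is arranged for this purpose. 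This identification extends to prenatural transformations of arbitrary degree and is compatible with the compositions $\m{i}$ on both sides. Strict unitality of the resulting $A_\infty$ functors is immediate from the construction, and the fact that they are mutually inverse follows because currying and uncurrying are manifestly inverse on the underlying data.

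The main obstacle is bookkeeping. The sign conventions from \eqref{eq:signshift} and the Koszul sign rule must be tracked carefully when matching components of $\fF$ against prenatural transformations in $\FunAu(\cA_2,\cA_3)$, and the equivalence of differentials must be checked componentwise using the description of $\m[\FunAu(\cA_2,\cA_3)]{1}$ recalled in \autoref{subsec:equivfun}. Once these verifications are carried out, the preservation of unitality in both directions and the compatibility with the compositions $\m{i}$ in $\FunAu$ yield the desired isomorphism in $\ACatu$; strictly speaking, I would first produce the isomorphism in $\ACatn$ from the bar-level identification and then observe that it restricts to the unital subcategories on both sides.
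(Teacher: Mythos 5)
Your proposal is correct and in substance coincides with the paper's proof, which simply invokes the closed multicategory structure on (unital) $A_\infty$ categories from \cite[Propositions 4.12 and 9.18]{BLM}: the currying/uncurrying isomorphism at the level of bar constructions that you sketch is exactly what those results encode, and your plan still leans on the BLM construction of the functor category for the differential-intertwining step, so either way the substance is delegated to (or reproduces) loc.\ cit. The only small imprecision is that at the non-unital stage the curried target should be $\FunAn(\cA_2,\cA_3)$ rather than $\FunAu(\cA_2,\cA_3)$, the identification of the unital internal Hom with the full subcategory of unital functors (the content of \cite[Proposition 9.18]{BLM}) entering only at the restriction step you mention at the end.
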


\begin{proof}
It follows from \cite[Proposition 9.18]{BLM} together with \cite[Proposition 4.12]{BLM}.
\end{proof}

In complete analogy with the case of $A_\infty$ functors (see \autoref{subsec:equivfun}), if $\fF_1$ and $\fF_2$ are in $\ACatu(\cA_1,\ldots,\cA_n,\cA)$, we say that $\fF_1$ and $\fF_2$ are \emph{weakly equivalent} (denoted by $\fF_1\hiso\fF_2$) if they are isomorphic in the category $H^0(\FunAu(\cA_1,\ldots,\cA_n,\cA))$.
The relation $\hiso$ is clearly compatible with compositions and then we can define a quotient (multi)category $\QACatu$ with the same objects and whose morphisms are given by
\[
\QACatu(\cA_1,\ldots,\cA_n,\cA):=\ACatu(\cA_1,\ldots,\cA_n,\cA)/\hiso.
\]

\begin{prop}\label{IHMC2}
For every $\cA_1,\cA_2,\cA_3\in\dgCat$ there is a natural bijection
\[
\xymatrix{
\QACatu(\cA_1\otimes\cA_2,\cA_3) \ar@{<->}[r]^-{1:1} & \QACatu(\cA_1,\cA_2,\cA_3).
}
\]
\end{prop}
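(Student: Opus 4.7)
The plan is to combine \autoref{bifunqe} with a currying/uncurrying construction. First, by \autoref{bifunqe}, the isomorphism in $\ACatu$
\[
\FunAu(\cA_1,\cA_2,\cA_3)\iso\FunAu(\cA_1,\FunAu(\cA_2,\cA_3))
\]
induces a bijection on objects and an isomorphism on $H^0$'s of the respective $\FunAu$ categories, hence a natural bijection
\[
\QACatu(\cA_1,\cA_2,\cA_3)\leftrightarrow\QACatu(\cA_1,\FunAu(\cA_2,\cA_3)).
\]
Since $\cA_3$ is a dg category, so is $\FunAu(\cA_2,\cA_3)$. Thus it suffices to construct a natural bijection
\[
\QACatu(\cA_1\otimes\cA_2,\cA_3)\leftrightarrow\QACatu(\cA_1,\FunAu(\cA_2,\cA_3)).
\]

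The candidate correspondence is given by currying. In the forward direction, given $\fF\in\ACatu(\cA_1\otimes\cA_2,\cA_3)$, I would define $\widetilde{\fF}\colon\cA_1\to\FunAu(\cA_2,\cA_3)$ by sending each object $A\in\cA_1$ to the strictly unital $A_\infty$ functor $\cA_2\to\cA_3$ obtained from $\fF$ by inserting the strict unit $\id_A\in\cA_1(A,A)$ into all $\cA_1$-slots; the higher components $\widetilde{\fF}^k$ for $k\ge1$ are extracted from higher components of $\fF$ in an analogous manner. In the reverse direction, uncurry using the tensor product structure at the dg level together with the canonical dg multifunctor $(\cA_1,\cA_2)\to\cA_1\otimes\cA_2$.

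The remaining verifications are: \textup{(i)} currying produces a genuine unital $A_\infty$ functor, \textup{(ii)} currying and uncurrying descend to $\hiso$-classes on both sides, and \textup{(iii)} they are mutually inverse after descent to $\QACatu$. The hard part will be \textup{(i)}: writing the higher components of $\widetilde{\fF}$ explicitly and checking that the multi-$A_\infty$ relations and unitality follow from those of $\fF$. The conceptual framework that makes this transparent is the bar/cobar formalism: the htpiso of \autoref{hiso2} identifies $\coB(\red{\aug{\Bi(\cA_1)}\otimes\aug{\Bi(\cA_2)}})$ (from which multifunctors out of $(\cA_1,\cA_2)$ are built by bar--cobar adjunction) with $\red{\aug{\cA_1}\otimes\aug{\cA_2}}$ (which is closely related to $\cA_1\otimes\cA_2$), providing the shuffle structure that intertwines the dg tensor product with the bar construction and ensures that currying respects the $A_\infty$ coherence relations, as well as the compatibility with $\hiso$.
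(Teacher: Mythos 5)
Your reduction, via \autoref{bifunqe}, to a currying bijection $\QACatu(\cA_1\otimes\cA_2,\cA_3)\leftrightarrow\QACatu(\cA_1,\FunAu(\cA_2,\cA_3))$ is harmless bookkeeping (it is essentially step (D) in the proof of \autoref{teo:IntHom}), but that bijection has exactly the same strength as the proposition itself, and your sketch of it leaves the actual content unproved. The direction you describe is the easy one: restricting $\fF\in\ACatu(\cA_1\otimes\cA_2,\cA_3)$ along units, equivalently composing with the canonical bi-strict multifunctor $(\cA_1,\cA_2)\to\cA_1\otimes\cA_2$. Note that your proposed ``uncurrying'' recipe --- composing with that same canonical multifunctor --- also goes in this direction: it turns functors out of $\cA_1\otimes\cA_2$ into multifunctors out of $(\cA_1,\cA_2)$, not the other way around; using it to go back would amount to invoking the universal property of $\cA_1\otimes\cA_2$ that you are trying to prove. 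The genuinely hard direction --- producing, from a unital multifunctor (equivalently, via the bar--cobar formalism, an $A_\infty$ functor out of $\coB(\cC)$ with $\cC=\red{\aug{\B(\cA_1)}\otimes\aug{\B(\cA_2)}}$), a unital $A_\infty$ functor out of $\cA_1\otimes\cA_2$, and showing the two assignments are mutually inverse modulo $\hiso$ --- is given no construction; this is your item (iii), and it, not (i), is where the difficulty sits.

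This is precisely where the paper's proof (which follows \cite[Proposition 3.8]{COS}) has to invest. The dg functor $\dgmf{\fN}\colon\coB(\cC)\to\red{\aug{\cA_1}\otimes\aug{\cA_2}}$ is a {\htpiso} by \autoref{hiso2} --- the whole purpose of the technical work in \autoref{sec:adjUI}, since over an arbitrary $\K$ a quasi-isomorphism would not suffice --- and its target is a strictly unital dg category when $\cA_1,\cA_2\in\dgCat$, so \autoref{hequiv} yields $\fH\in\ACatu(\red{\aug{\cA_1}\otimes\aug{\cA_2}},\coB(\cC))$ with $\fH\comp\dgmf{\fN}\hiso\id_{\coB(\cC)}$; the bijection on $\hiso$-classes is then obtained by transporting along $\dgmf{\fN}$ and $\fH$ (together with the comparison of $\red{\aug{\cA_1}\otimes\aug{\cA_2}}$ with $\cA_1\otimes\cA_2$, and a final application of \autoref{nathiso}). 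Citing \autoref{hiso2} only as a ``conceptual framework'' that ``ensures compatibility with $\hiso$'' does not supply this: the $\hiso$-inverse of $\dgmf{\fN}$ \emph{is} the missing uncurrying map, and without it neither injectivity nor surjectivity of your currying on $\hiso$-classes is established. Relatedly, the unspecified higher components $\widetilde{\fF}^k$, $k\ge1$, and the verification of the $A_\infty$ and unitality relations for them are exactly what the multifunctor formalism of \cite{BLM} packages, so even your item (i) is better handled through \autoref{bifunqe} than by hand.
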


\begin{proof}
We just sketch the proof, which is essentially the same as that of \cite[Proposition 3.8]{COS}, with a few adjustments at some points. Keeping the same notation of \cite[Section 3]{COS}, $\dgmf{\fN}\in\dgCatn(\coB(\cC),\red{\aug{\cA_1}\otimes\aug{\cA_2}})$ (where $\cC:=\red{\aug{\B(\cA_1)}\otimes\aug{\B(\cA_2)}}$) is actually a {\htpiso} by \autoref{hiso2}. Then, as $\red{\aug{\cA_1}\otimes\aug{\cA_2}}\in\dgCat$ when $\cA_1,\cA_2\in\dgCat$ (see the proof of \cite[Lemma 3.7]{COS}), by \autoref{hequiv} $\dgmf{\fN}$ is unital and there exists $\fH\in\ACatu(\red{\aug{\cA_1}\otimes\aug{\cA_2}},\coB(\cC))$ such that $\fH\comp\dgmf{\fN}\hiso\id_{\coB(\cC)}$. No further change is needed in the rest of the proof, except that in the end we use \autoref{nathiso} instead of \cite[Lemma 1.6]{Sei}.
\end{proof}

\subsection{Proof of the first part of \autoref{IntHom}}\label{subsec:proofintHoms}

Given $\cA_1,\cA_2,\cA_3\in\dgCat$, we want to prove that there are the following natural bijections (where $\cA_i\hp$ denotes a h-projective resolution of $\cA_i$, for $i=1,2$)
\begin{equation}\label{eq:quadratone}
\xymatrix{
\Hqe(\cA_1\lotimes\cA_2,\cA_3) \ar@{<->}[d]^-{1:1}_-{(\mathrm{A})} & & \\
\Hqe(\cA_1\hp\otimes\cA_2\hp,\cA_3) \ar@{<->}[d]^-{1:1}_-{(\mathrm{B})} & & \Hqe(\cA_1,\FunAu(\cA_2\hp,\cA_3)) \ar@{<->}[d]_-{1:1}^-{(\mathrm{F})} \\
\QACatu(\cA_1\hp\otimes\cA_2\hp,\cA_3) \ar@{<->}[d]^-{1:1}_-{(\mathrm{C})} & & \ar@{<->}[d]_-{1:1}^-{(\mathrm{E})} \Hqe(\cA_1\hp,\FunAu(\cA_2\hp,\cA_3)) \\
\QACatu(\cA_1\hp,\cA_2\hp,\cA_3) \ar@{<->}[rr]^-{1:1}_-{(\mathrm{D})} & & \QACatu(\cA_1\hp,\FunAu(\cA_2\hp,\cA_3)),
}
\end{equation}
since this would imply the wanted natural bijection
\[
\xymatrix{
\Hqe(\cA_1\lotimes\cA_2,\cA_3) \ar@{<->}[r]^-{1:1} & \Hqe(\cA_1,\FunAu(\cA_2\hp,\cA_3)).
}
\]
Now, the existence of (A), and (F) follows from the isomorphisms $\cA_1\lotimes\cA_2\iso\cA_1\hp\otimes\cA_2\hp$ and $\cA_1\hp\iso\cA_1$ in $\Hqe$. Taking into account that $\cA_1\hp\otimes\cA_2\hp$ is h-projective by \autoref{tensorhp}, (B) and (E) are due to \autoref{hproj}. Finally, \autoref{IHMC2} implies (C), whereas (D) is a direct consequence of \autoref{bifunqe}.

This clearly implies that in the symmetric monoidal category $\Hqe$ the internal Hom $\IHom(\cA,\cB)$ between two dg categories $\cA$ and $\cB$ is, up to isomorphism in $\Hqe$, the dg category $\FunAu(\cA\hp,\cB)$.

\begin{remark}\label{rmk:compositions}
Let us now show that our new construction of the internal Homs naturally answers To\"en question about the compatibility with compositions in the introduction. Indeed, as $\cB$ and its cofibrant replacements $\tilde{\cB}$ are isomorphic in $\Hqe$, the universal property of the internal Hom yields the isomorphism
\[
\FunAu(\cA\hp,\cB)\iso\FunAu(\cA\hp,\tilde{\cB})
\]
in $\Hqe$. Finally, as we explained in the proof of \autoref{hproj}, $\cA\hp$ and the cofibrant replacement $\tilde{\cA}$ are isomorphic in $\QACatu$, thus we can simply set
\[
\IHom(\cA,\cB):=\FunAu(\tilde{\cA},\tilde{\cB})
\]
yielding a bijection
\begin{equation}\label{isoHom}
\xymatrix{
\Hqe(\cA,\cB)\ar@{<->}[r]^-{1:1} &\Iso(H^0(\IHom(\cA,\cB)))=\QACatu(\tilde{\cA},\tilde{\cB}),
}
\end{equation}
where the last equality is by definition.

As in the proof \autoref{hproj}, the bijection \eqref{isoHom} boils down to the composition of bijections
\[
\xymatrix{
\Hqe(\cA,\cB)\ar@{<->}[r]^-{1:1} &\Hqe(\tilde{\cA},\tilde{\cB})\ar@{<->}[r]^-{1:1} &\dgCat(\tilde{\cA},\tilde{\cB})/\htpm{}\ar@{<->}[r]^-{1:1} &\QACatu(\tilde{\cA},\tilde{\cB}).
}
\]
Now, the first and the last bijections are compatible with pre and post compositions by definition, while the second one is such in view of \autoref{composition}.
\end{remark}

In order to complete the proof of \autoref{IntHom}, we have to show that $\FunAu(\cA\hp,\cB)$ and $\FunA(\cA\hps,\cB)$ (where $\cA\hps$ denotes a h-projective resolution with split units of $\cA$) are isomorphic in $\Hqe$, for any dg categories $\cA$ and $\cB$. To this aim, note that, in general, if $\cC$ is a h-projective dg category with split units, then the fully faithful embedding $\FunA(\cC,\cB)\mono\FunAu(\cC,\cB)$ is indeed a quasi-equivalence. The argument is the same as in \cite[Corollary 2.6]{COS}, where we replace (the erroneous) \cite[Proposition 2.5]{COS} with \autoref{steq}. Thus $\FunAu(\cA\hps,\cB)\iso\FunA(\cA\hps,\cB)$ in $\Hqe$. On the other hand, $\cA\hp$ and $\cA\hps$ are isomorphic in $\Hqe$ as well. Thus the universal property of the internal Homs implies that $\FunAu(\cA\hp,\cB)\iso\FunAu(\cA\hps,\cB)$ in $\Hqe$. This concludes the proof.

\begin{remark}\label{rmk:KK}
It is worth pointing out that \autoref{IntHom} implies Kontsevich--Keller's Claim in the introduction. Indeed, if $\cA$ is a dg category such that $\cA(A,A')$ is cofibrant, for all $A,A'\in\cA$, then $\cA$ is h-projective by \cite[Lemma 2.3.8]{Hov}. Moreover, if $\cA$ has the additional property that the unit map $\K\to\cA(A,A)$ admits a retraction as a morphism of complexes, for all $A\in\cA$, then $\cA$ has split units. Thus $\cA\iso\cA\hps$ in $\Hqe$, whence $\FunA(\cA\hps,\cB)\iso\FunA(\cA,\cB)$ in $\Hqe$.
\end{remark}

%%%%%%%

\bigskip

{\small\noindent{\bf Acknowledgements.} Part of this work was carried out while the third author was visiting the Laboratoire de Math\'ematiques d'Orsay (Universit\'e Paris-Saclay) and the Simons Laufer Mathematical Sciences Institute (formely MSRI) with the support of the NSF grant DMS-1928930. We are pleased to thank these institutions for the warm hospitality. We are grateful to James Pascaleff, Yong-Geung Oh and Paul Seidel for answering our emails and providing important feedback. We are particularly grateful to Hiro Lee Tanaka for patiently answering our questions and for sharing his preprint \cite{Tan} and to Mohammed Abouzaid for raising a question about cohomologically unital $A_\infty$ categories which led to a major revision of this paper.}

%%%%%%%%%


\begin{thebibliography}{99}
	
\bibitem{An} B.\ Antieau, \emph{On the uniqueness of infinity-categorical enhancements of triangulated categories}, arXiv:1812.01526.

\bibitem{BLM} Y.\ Bespalov, V.\ Lyubashenko, O.\ Manzyuk, \emph{Pretriangulated $A_\infty$-categories}, Proceedings of Institute of Mathematics of NAS of Ukraine, Mathematics and its Applications {\bf 76}, Kiev, 2008, 599 pp.

\bibitem{BLL} A.\ Bondal, M.\ Larsen, V.\ Lunts, \emph{Grothendieck ring of pretriangulated categories}, Int.\ Math.\ Res.\ Not.\ {\bf 29} (2004), 1461--1495.

\bibitem{CNS} A.\ Canonaco, A.\ Neeman, P.\ Stellari, \emph{Uniqueness of enhancements for derived and geometric categories}, Forum Math.\ Sigma {\bf 10} (2022), 1--65.
	
\bibitem{COS} A.\ Canonaco, M.\ Ornaghi, P.\ Stellari, \emph{Localizations of the category of $A_\infty$-categories and Internal Homs}, Doc. Math. {\bf 24}, (2019) 2463-2492.

\bibitem{CS} A.\ Canonaco, P.\ Stellari, \emph{Internal Homs via extensions of dg functors}, Adv.\ Math.\ {\bf 277} (2015), 100--123.

\bibitem{CS6} A.\ Canonaco, P.\ Stellari, \emph{Uniqueness of dg enhancements for the derived category of a Grothendieck category}, J.\ Eur.\ Math.\ Soc. {\bf 20} (2018), 2607--2641.

\bibitem{Dr} V.\ Drinfeld, \emph{DG quotients of DG categories}, J.\ Algebra {\bf 272} (2004), 643--691.

\bibitem{Ge} F.\ Genovese, J.\ Ramos Gonz\'alez, \emph{A derived Gabriel--Popescu theorem for t-structures via derived injectives}, Int.\ Math.\ Research Notices Volume 2023, Issue 6, 4695--4760.

\bibitem{FOOOH1} K.\ Fukaya, Y.-G.\ Oh, H.\ Ohta, K.\ Ono, \emph{Lagrangian intersection Floer theory: anomaly and obstruction. Part I.} AMS/IP Studies in Advanced Mathematics, v.46,{\bf 2}, (2009).

\bibitem{GPS} S.\ Ganatra, J.\ Pardon, V.\ Shende, \emph{Sectorial descent for wrapped Fukaya categories}, J.\ Amer.\ Math.\ Soc.\ {\bf 37} (2024), 499--635.

\bibitem{GH} D.\ Gepner, R.\ Haugseng, \emph{Enriched $\infty$-categories via non-symmetric $\infty$-operads}, Adv.\ Math.\ {\bf 279} (2015), 575--716.

\bibitem{Hau} R.\ Haugseng, \emph{Rectification of enriched $\infty$-categories}, Algebr.\ Geom.\ Topol. {\bf 15} (2015), 1931--1982.

\bibitem{Hov} M.\ Hovey, \emph{Model categories}, Mathematical Surveys and Monographs {\bf 63}, American Mathematical Society, Providence (1999).

\bibitem{K2} B.\ Keller, \emph{On differential graded categories}, International Congress of Mathematicians Vol.\ II, Eur.\ Math.\ Soc., Z\"urich (2006), 151--190.

\bibitem{Ko} M.\ Kontsevich, \emph{Homological algebra of Mirror Symmetry}, in: Proceedings of the International Congress of Mathematicians (Zurich, 1994, ed. S.D. Chatterji), Birkhauser, Basel (1995), 120--139.

\bibitem{LH} K.\ Lef\`evre-Hasegawa, \emph{Sur les $A_{\infty}$-cat\'egories}, arXiv:math/0310337.

\bibitem{LV} J.-L.\ Loday, B.\ Vallette, \emph{Algebraic Operads}, Grundlehren Math.\ Wiss.\ {\bf 346}, Springer, Berlin (2012).

\bibitem{LO} V.\ Lunts, D.\ Orlov, \emph{Uniqueness of enhancements for triangulated categories}, J.\ Amer.\ Math.\ Soc.\ {\bf 23} (2010), 853--908.

\bibitem{Lyu} V.V.\ Lyubashenko, \emph{Category of $A_\infty$-categories}, Homology, Homotopy and Applications 5, no. {\bf 1}, (2003), 1-48.

\bibitem{LS} V.\ Lunts, O.M.\ Schn\"urer, \emph{New enhancements of derived categories of coherent sheaves and applications}, J.\ Algebra {\bf 446} (2016), 203--274.

\bibitem{OT} Y.-G.\ Oh, H.\ L.\ Tanaka, \emph{$\infty$-categorical universal properties of quotients and localizations of $A_\infty$-categories}, arXiv:2003.05806.
  
\bibitem{Orn} M.\ Ornaghi, \emph{Comparison results about dg categories, $A_\infty$ categories, stable $\infty$-categories and noncommutative motives}, Ph.D.\ Thesis, Milano (2018).

\bibitem{Orn1} M.\ Ornaghi, \emph{Tensor product of $A_\infty$ categories and homotopy coproduct of $A_\infty$ algebras}, in preparation.

\bibitem{Orn2} M.\ Ornaghi, \emph{The homotopy theory of $A_\infty$ categories}, arXiv:2408.05325.

\bibitem{Pas} J.\ Pascaleff, \emph{Remarks on the equivalence between differential graded categories and $A_\infty$ categories}, Homology, Homotopy and Applications {\bf 26} (2024), 275--285.

\bibitem{Sei} P.\ Seidel, \emph{Fukaya categories and Picard-Lefschetz theory},  Zurich Lectures in Advanced Mathematics. European Mathematical Society (EMS), Z\"urich, 2008.
  
\bibitem{Ta} G.\ Tabuada, \emph{Une structure de cat\'egorie de mod\`eles de Quillen sur la cat\'egorie des dg-cat\'egories}, C.\ R.\ Math.\ Acad.\ Sci.\ Paris, {\bf 340} (2005), 15--19.

\bibitem{Tan} H.\ L.\ Tanaka, \emph{Unitalities and mapping spaces in $A_\infty$-categories}, arXiv:2407.05532.

\bibitem{To} B.\ To\"en, \emph{The homotopy theory of dg-categories and derived Morita theory}, Invent.\ Math.\ {\bf 167} (2007), 615--667.

\bibitem{Ve} J.-L.\ Verdier, \emph{Des cat\'{e}gories d\'{e}riv\'{e}es des cat\'{e}gories ab\'{e}liennes}, Ast\'{e}risque {\bf 239} (1996), xii+253.

\end{thebibliography}
\end{document}